\newtheorem{theorem}{Theorem}[section]
\newtheorem{lemma}[theorem]{Lemma}
\newtheorem{prop}{Proposition}[section]
\newtheorem{remark}{Remark}[section]
\newtheorem{condition}{Condition}[section]
\newtheorem{corollary}{Corollary}[section]
\newtheorem{exm}{Example}[section]
\newtheorem{ex}{Example}
\newcommand{\R}{\mathbb{R}}
\newcommand{\EE}{\mathbb{E}}
\newcommand{\PP}{\mathbb{P}}
\newcommand{\abs}[1]{\left|#1\right|}
\newcommand{\Rm}[1]{{\rm #1}}
\DeclarePairedDelimiter\ceil{\lceil}{\rceil}
\DeclarePairedDelimiter\floor{\lfloor}{\rfloor}
\theoremstyle{remark}
\newtheorem{definition}[theorem]{Definition}
\title{Indirect Statistical Inference with Guaranteed\\ Necessity and Sufficiency }
\author[a,b,c]{{Zhengjun} {Zhang}}
\author[d]{{Xinyang} {Hu}}
\author[e]{{Chuyang} {Lu}}
\author[a]{{Tianying} {Liu}\thanks{Corresponding author, Email: liutianying23@mails.ucas.ac.cn}}
\affil[a]{School of Economics and Management, University of Chinese Academy of Sciences}
\affil[b]{AMSS Center for Forecasting Science, Chinese Academy of Sciences}
\affil[c]{Department of Statistics, University of Wisconsin, Madison}
\affil[d]{Department of Statistics, Yale University}
\affil[e]{Academy of Mathematics and System Sciences, Chinese Academy of Sciences}
\begin{document}
\maketitle
\begin{abstract}
This paper develops a new framework for indirect statistical inference with guaranteed necessity and sufficiency, applicable to continuous random variables. We prove that when comparing exponentially transformed order statistics from an assumed distribution with those from simulated unit exponential samples, the ranked quotients exhibit distinct asymptotics: the left segment converges to a non-degenerate distribution, while the middle and right segments degenerate to one. This yields a necessary and sufficient condition in probability for two sequences of continuous random variables to follow the same distribution. Building on this, we propose an optimization criterion based on relative errors between ordered samples. The criterion achieves its minimum if and only if the assumed and true distributions coincide, providing a second necessary and sufficient condition in optimization. These dual NS properties, rare in the literature, establish a fundamentally stronger inference framework than existing methods. Unlike classical approaches based on absolute errors (e.g., Kolmogorov–Smirnov), NSE exploits relative errors to ensure faster convergence, requires only mild approximability of the cumulative distribution function, and provides both point and interval estimates. Simulations and real-data applications confirm NSE’s superior performance in preserving distributional assumptions where traditional methods fail.
\end{abstract}

\noindent{\bf keywords:} combinatorial mathematics, indirect inference, relative errors, simulated order statistics.

\section{Introduction}\label{sec_introduction}
In statistical inference, distributional assumptions of continuous random variables play a central role in model building. For unknown parameters, many estimation methods have been proposed, including maximum likelihood estimation (MLE), least squares (OLS), method of moments (MM), generalized method of moments (GMM) \citep{hansen1982large}, generalized estimating equations (GEE) \citep{hardin2002generalized}, and resampling-based methods such as the bootstrap. These procedures all begin with a distributional assumption and construct estimators accordingly. If the assumed model is correct, the resulting estimators are consistent and efficient. In this sense, they can be regarded as sufficient estimations. However, they are not necessary estimations, because they do not guarantee that the fitted distribution coincides with the assumed one. Even when supplemented with test statistics such as the Kolmogorov–Smirnov or Anderson–Darling tests, inference remains tied to “if” conditions under correct specification, rather than necessary and sufficient criteria.

This limitation is not trivial. MLE, MM, and the bootstrap have long been regarded as pillars of modern statistical inference, supporting both theory and practice. Yet they remain fundamentally sufficient-only procedures. Once the proposed Necessary and Sufficient Estimation (NSE) framework is available, these classical methods are no longer strictly indispensable. NSE integrates distributional preservation directly into the estimation process, providing a more rigorous foundation for statistical inference and offering particular value in AI and machine learning, where reliability and interpretability depend on valid distributional assumptions.

Although the literature contains extensive work on goodness-of-fit procedures (see \citep{Aslam2021}, for a recent overview), existing parameter estimation methods rarely guarantee preservation of distributional assumptions after marginal transformations of fitted models. Consider linear and semiparametric regression, where error terms are often assumed to be normally distributed. When data deviate from normality, the power transformation of \citep{box1964analysis}  is frequently applied to stabilize variance and approximate Gaussianity. Yet such transformations often fail to maintain distributional assumptions, especially under nonlinear covariate–response relationships or missing covariates. Contributions from these effects can be absorbed into observed variables and error terms. We demonstrate with real data examples in Section  \ref{sec:realdata}  that power transformations alone are insufficient. In contrast, by minimizing a relative distance defined in this paper, the normality of error terms can always be preserved.

Beyond regression, many statistical applications face even greater challenges. In extreme value theory, MLE of some distributions may be inefficient or fail to exist; see  \citep{matthews1982testing} and \citep{cheng1995non} for examples in threshold and changepoint models, and \cite{atkinson1985plots} for Box–Cox transforms with shifting parameters. Numerical approaches such as maximum simulated likelihood \citep{bhat2001quasi} and approximate likelihood \citep{breslow1993approximate} have been enabled by modern computing, but they do not resolve the fundamental problem of preserving distributional assumptions.

This paper proposes a new inference framework for continuous random variables that avoids reliance on strict regularity conditions and explicit density forms. The central idea is that order statistics from observed data and from simulated samples under the assumed distribution should exhibit similar ranked quotients. We prove a fundamental theorem: when partitioning the quotients into three regions, the left part converges to a non-degenerate distribution, while the middle and right parts degenerate to one. This leads to a necessary and sufficient condition in probability for two sequences of continuous random variables to follow the same distribution.

Building on this probabilistic foundation, we construct an optimization criterion based on relative errors between ordered observed and simulated samples. Minimizing this criterion yields a second necessary and sufficient condition in optimization, since the objective function attains its minimum if and only if the assumed and true distributions coincide. These dual NSE results—one in probability, one in optimization—are, to the best of our knowledge, unique in the literature and represent a significant departure from existing methods. Moreover, relative errors yield faster convergence rates than absolute-error approaches such as OLS or Kolmogorov–Smirnov tests.

The rest of the paper is organized as follows. Section  \ref{sec:theory} introduces the concept of maximum ranked quotients (MRQ) and examines their asymptotic properties. Section \ref{sec:inference} develops point and interval estimation procedures under the NSE framework. Section  \ref{sec:simulation} presents simulation studies demonstrating the performance of NSE, particularly in cases where MLE fails or OLS assumptions are violated. Section \ref{sec:realdata} applies the method to real datasets from different fields, highlighting improvements over classical approaches. Section \ref{sec:conclusion} concludes with a discussion of applications and comparisons with other estimation methods. Proofs and supplementary figures are provided in the Appendix.

\section{Theory of Ranked Quotients}\label{sec:theory}
\subsection{Motivations}\label{motiv}
We use the following semiparametric regression model to illustrate our results.
\begin{equation}\label{Eqn:semi}
  Y_{i,j} = f_1(X_i) + f_2(X_i) + \varepsilon_{i,j},\ i=1,\dots,n;\ j=1,\dots,m
\end{equation}
where the subscript $j$ stands for the $j$th lab, the subscript $i$ stands for the $i$th experiment, $X_i$ is a $p$-dimensional covariate vector at the $i$th experiment, $Y_{i,j}$ is the response at the $i$th experiment from $j$th lab, $\varepsilon_{i,j}$ are error terms following a pre-specified distribution, $f_1(.)$ is a parametric function, and $f_2(.)$ is a nonparametric function. Note that at the $i$th experiment, all labs use the same $X_i$, e.g., the same component materials (conditions). Suppose we only observe $Y_{i,1}$, $i=1,\dots,n$, and we treat $j=2,\dots,m$ as computer experiments.

Suppose the goal is to estimate parameters in $f_1(.)$ and $\varepsilon_{i,j}$, and the form of $f_2(.)$ such that $(\widehat{\varepsilon}_{1,1},\dots,\widehat{\varepsilon}_{n,1})$ is approximately distributed the same as $({\varepsilon_{1,1}},\dots,{\varepsilon_{n,1}})$, and $\sum_{i=1}^{n}\widehat{\varepsilon}_{i,1}^2$ is minimized, where $\widehat{\varepsilon}_{i,1}=Y_{i,1}-\widehat{f}_1(X_i)-\widehat{f}_2(X_i)$ with $\widehat{f}_1(.)$ and $\widehat{f}_2(.)$ being the estimates of $f_1(.)$ and $f_2(.)$ respectively.

Roughly speaking, we note that $(\widehat{\varepsilon}_{1,1},\dots,\widehat{\varepsilon}_{n,1})$ is approximately distributed the same as $({\varepsilon_{1,1}},\dots,{\varepsilon_{n,1}})$ implies $(\widehat{\varepsilon}_{1,1},\dots,\widehat{\varepsilon}_{n,1})$ is approximately distributed the same as $({\varepsilon_{1,j}},\dots,{\varepsilon_{n,j}})$ for all $j$. Suppose $({\varepsilon_{(1),j}},\dots,{\varepsilon_{(n),j}})$ is the order statistics of $({\varepsilon_{1,j}},\dots,{\varepsilon_{n,j}})$. \cite{ZQM2011} proved that $({\varepsilon_{(1),j}},\dots,{\varepsilon_{(n),j}})$ is distributed the same as $({\varepsilon_{(1),k}},\dots,{\varepsilon_{(n),k}})$ for any $j$ and $k$, and then $(\widehat{\varepsilon}_{(1),j},\dots,\widehat{\varepsilon}_{(n),j})$ is distributed the same as $(\widehat{\varepsilon}_{(1),k},\dots,\widehat{\varepsilon}_{(n),k})$ for any $j$ and $k$.

With the pre-specified distribution of $\varepsilon_{i,1}$ and the estimated $(\widehat{\varepsilon}_{(1),j},\dots,\widehat{\varepsilon}_{(n),j})$, we can re-arrange  $\{Y_{i,1}\}$ corresponding to ordered $(\widehat{\varepsilon}_{(1),j},\dots,\widehat{\varepsilon}_{(n),j})$ as $\{Y_{i^*,1}\}$ (not necessarily ordered). As such, $\{Y_{i^*,1}\}$ are thought to be generated from $({\varepsilon_{(1),j}},\dots,{\varepsilon_{(n),j}})$ for any $j$.  A natural idea is to compare the distribution of $(\widehat{\varepsilon}_{(1),j},\dots,\widehat{\varepsilon}_{(n),j})$ with simulated $({\varepsilon_{(1),j}},\dots,{\varepsilon_{(n),j}})$ for all $j$.  Next question is how to measure the distance between $(\widehat{\varepsilon}_{(1),j},\dots,\widehat{\varepsilon}_{(n),j})$ and $({\varepsilon_{(1),j}},\dots,{\varepsilon_{(n),j}})$.

If we started with classical statistical treatments, we would consider the paired absolute distances as
\begin{equation}\label{Eqn: paridistance}
  \max_{1\le i\le n}\mid {\widehat{\varepsilon}_{(i),j}} - {\varepsilon_{(i),j}}\mid,\ j=1,\dots,m.
\end{equation}
Note that when $ \varepsilon_{i,j}$ is normally distributed, the convergence rate (in distribution) of $\max_{1\le i\le n}\widehat{\varepsilon}_{(i),j}$ is at $(2\log(n))^{-1/2}$, which is slower than $1/\sqrt{n}$. For any $j$, $\{\widehat{\varepsilon}_{(i),j} - {\varepsilon_{(i),j}},\ i=1,\dots,n\}$ is a dependence sequence, the conditions, e.g., $\Delta$-conditions, $D$-conditions in the classical extreme value theory, are not applicable. In addition, the approaches developed in quantile processes are not directly applicable.

Motivated by the concepts of quotient correlation and rank transformation in \cite{zhang2008quotient} and \cite{ZQM2011}, to measure the relationship between two samples, the samples may be ranked first and then the quotients between the ranked samples may be investigated. Proposition 4 in \cite{zhang2017random} Page 697 shows that using the relative errors of bivariate normal random variables in the definition of quotient correlation coefficient, the convergence rate (in distribution) of the coefficient is at $1/(2n\sqrt{1-\rho^2}/\pi)$  which is much faster than the rate $(2\log(n))^{-1/2}$ of the maxima of a univariate normal sequence, where $\rho$ is the Pearson linear correlation coefficient between two random variables. We thus, introduce the definition.
\begin{definition}
	If $\{X_i,i=1\ldots,n\}$ and $\{Y_i,i=1\ldots,n\}$ are random samples of two positive random variables $X$ and $Y$ respectively. Let $\Lambda$ be an index set belonging to $\{1,\ldots,n\}$. Then,
	\begin{equation}
		\label{mrqdf}
		q^{(1)}_{\Lambda}=\max_{i\in\Lambda}\frac{X_{(i)}}{Y_{(i)}}\text{ and }q^{(2)}_{\Lambda}=\max_{i\in\Lambda}\frac{Y_{(i)}}{X_{(i)}}
	\end{equation}
	are the maximum ranked quotient (MRQ) on $\Lambda$ with $\{X_{(i)}:X_{(1)}\leq X_{(2)}\leq\cdots\leq X_{(n)}\}$ and $\{Y_{(i)}:Y_{(1)}\leq Y_{(2)}\leq\cdots\leq Y_{(n)}\}$ being order statistics of $\{X_i,i=1,\ldots,n\}$ and $\{Y_i,i=1,\ldots,n\}$ respectively.
\end{definition}

To get a direct impression of MRQ, the empirical cumulative distribution function of $q^{(1)}_{\Lambda}$ for $\Lambda=\{1,2,\ldots,n\}$ and the Q-Q plot are drawn in Figure \ref{mqr} for two independent unit exponential samples $\{X_1,X_2,\ldots,X_n\}$ and $\{Y_1,Y_2,\ldots,Y_n\}$ over sample sizes $n=10,\ 100,\ 1,000,\ 10,000,\ 100,000$. The two plots show that it is very likely that $q^{(1)}_{\Lambda}$ has a non-degenerate limit distribution. Another discovery is that when $n\to\infty$, the probability that $q^{(1)}_{\Lambda}<1$ should converge to 0. These two observations are formally stated and proved in Section \ref{sec: basic}.
\begin{figure}[]
	\begin{minipage}[]{0.49\linewidth}
		\centering
		\includegraphics[width=2.8in]{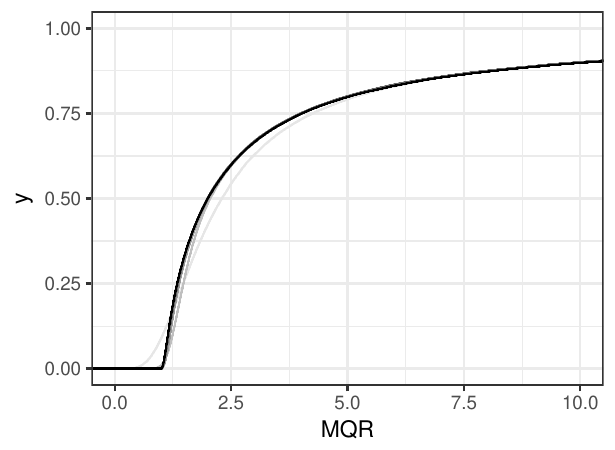}
	\end{minipage}
	\begin{minipage}[]{0.49\linewidth}
		\centering
		\includegraphics[width=2.8in]{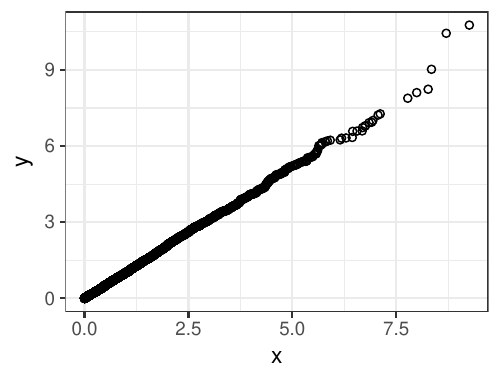}
	\end{minipage}
	\caption{The left panel is the empirical cumulative distribution function of maximum ranked quotient between two independent unit exponential samples. The sample sizes increases from 10 to 100,000 as the line color darkens. The right panel is the Q-Q plot of two independent unit exponential samples with sample size 10,000. }
	\label{mqr}
\end{figure}

It is clear that if two samples are generated from the same distribution, their respective order statistics must have the same distribution. MRQ can certainly be used to measure how close the two order statistics are, i.e., how strong their co-movements are. The extreme case is that the elements in the two samples are identical when $\Lambda=\{1,2,\ldots,n\}$ and  $q^{(1)}_{\Lambda}=q^{(2)}_{\Lambda}=1$ hold. As such, MRQ can characterize the closeness of a model to its distribution assumption based on collected data and data simulated from the assumed distribution. It is worth noting that there are some other measures/tests for distribution assumption, such as the empirical distribution function whose property is given by Glivenko-Cantelli theorem \citep{durrett2010probability}. Section \ref{sec:conclusion} offers more discussions.
\subsection{Basic Properties}\label{sec: basic}
Suppose $X$ and $Y$ are independent identically distributed random variables. Suppose $\{(X_i,Y_i), i=1,\ldots,n\}$ is a bivariate random sample of $(X,Y)$. Let $\{X_{(i)}:X_{(1)}\le X_{(2)}\le\cdots\le X_{(n)}\}$ and $\{Y_{(i)}:Y_{(1)}\le Y_{(2)}\le\cdots\le Y_{(n)}\}$ be order statistics of $\{X_i,i=1,\ldots,n\}$ and $\{Y_i,i=1,\ldots,n\}$ respectively. Then there are $2n$ quotients: $\{X_{(i)}/Y_{(i)},i=1,\ldots,n\}$ and $\{Y_{(i)}/X_{(i)},i=1,\ldots,n\}$, which are called ranked quotients. Define
\begin{equation}
	\label{def:mrq}
	q^{(1)}_n=\max_{1\leq i\leq n}X_{(i)}/Y_{(i)}\quad\text{and }\quad q^{(2)}_n=\max_{1\leq i\leq n}Y_{(i)}/X_{(i)},
\end{equation}
as the maximum ranked quotients between $\{X_i,i=1,\ldots,n\}$ and $\{Y_i,i=1,\ldots,n\}$.

For ease of notation, $q_n$ may be used to denote $q^{(1)}_n$ or $q^{(2)}_n$ if no confusion is caused. It is well known that any absolutely continuous random variable can be transformed into another random variable with the desired distribution. Without loss of generality, suppose now  $\{(X_i,Y_i), i=1,\ldots,n\}$ is a bivariate random sample of $(X,Y)$ with $X_i$ and $Y_i$ having independent unit exponential distributions. Then the following asymptotic conclusion can be drawn:
\begin{prop}
	\label{prop1}
	If $X$ and $Y$ are independent and have unit exponential distribution, and $(X_i,Y_i), i=1,\ldots,n,$ is a random sample from $(X,Y)$, then
	\begin{equation}
		\label{def:prop1}
		\lim\limits_{n\to\infty}P(q^{(1)}_n>1)=\lim\limits_{n\to\infty}P(q^{(2)}_n>1)=1.
	\end{equation}
\end{prop}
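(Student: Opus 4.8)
The plan is to reduce Proposition \ref{prop1} to a purely combinatorial identity about random arrangements, exploiting the fact that under the stated hypotheses all $2n$ observations $X_1,\dots,X_n,Y_1,\dots,Y_n$ are i.i.d.\ continuous random variables. First, by the symmetry between $X$ and $Y$, the random variables $q_n^{(1)}$ and $q_n^{(2)}$ have the same law, so it suffices to show $\lim_n P(q_n^{(1)}>1)=1$, equivalently $\lim_n P(q_n^{(1)}\le 1)=0$. Since the $2n$ variables are jointly continuous, with probability one they are pairwise distinct and no ranked quotient equals $1$; on this full-probability event the event $\{q_n^{(1)}\le 1\}$ coincides with $\{X_{(i)}<Y_{(i)}\text{ for all }i=1,\dots,n\}$.

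Next I would encode the relative order of the pooled sample. Listing the $2n$ values in increasing order, record the ``type pattern'' $w=(w_1,\dots,w_{2n})\in\{X,Y\}^{2n}$, where $w_k$ indicates whether the $k$-th smallest pooled value is one of the $X_i$ or one of the $Y_i$; exactly $n$ letters equal $X$. Because the $2n$ observations are exchangeable (indeed i.i.d.), $w$ is uniformly distributed over the $\binom{2n}{n}$ words with $n$ $X$'s and $n$ $Y$'s. The order statistic $X_{(i)}$ occupies the position of the $i$-th letter $X$ in $w$, and $Y_{(i)}$ the position of the $i$-th letter $Y$; hence $\{X_{(i)}<Y_{(i)}\ \forall i\}$ occurs exactly when, in $w$, the $i$-th occurrence of $X$ precedes the $i$-th occurrence of $Y$ for every $i$.

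The key step is to identify this last condition with the ballot/Dyck condition: the $i$-th $X$ precedes the $i$-th $Y$ for all $i$ if and only if every prefix $w_1\cdots w_k$ contains at least as many $X$'s as $Y$'s. (For one direction, if a prefix had more $Y$'s than $X$'s, then with $i$ the number of $Y$'s in that prefix the $i$-th $Y$ would appear no later than the $i$-th $X$; the converse is analogous, reading off the position of the $i$-th $Y$.) Counting the words obeying this prefix condition is the classical ballot problem, whose answer is the Catalan number $C_n=\tfrac{1}{n+1}\binom{2n}{n}$, e.g.\ by the reflection principle. Therefore $P(q_n^{(1)}\le 1)=C_n/\binom{2n}{n}=\tfrac{1}{n+1}\to 0$, and combined with the symmetry remark this gives $\lim_n P(q_n^{(1)}>1)=\lim_n P(q_n^{(2)}>1)=1$.

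The only substantive content is the combinatorial translation in the last paragraph — matching the event $\{X_{(i)}<Y_{(i)}\ \forall i\}$ to Dyck words and invoking the Catalan count — and I expect the main care to lie in stating the prefix equivalence cleanly. Everything else (the symmetry, the almost-sure distinctness, the exchangeability of the type pattern) is routine; note in particular that the exponential assumption enters only through continuity, so the exact identity $P(q_n^{(1)}\le 1)=\tfrac{1}{n+1}$ in fact holds for any continuous common distribution.
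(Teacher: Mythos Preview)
Your argument is correct and yields exactly the same identity $P(q_n^{(1)}\le 1)=\tfrac{1}{n+1}$ as the paper, but by a genuinely different route. The paper works analytically: it writes down the joint density of the two sets of exponential order statistics, expresses $F(t,n)=P(q_n^{(1)}\le t)$ as a nested integral, introduces the shorthand $[c_1,\dots,c_n]$ for certain iterated integrals, and then observes that at $t=1$ a telescoping cancellation among the terms $(i_1,\dots,i_k)_1$ and $(i_1,\dots,i_k)_2$ collapses the sum to $\tfrac{1}{n+1}$. Your proof bypasses all of this by noting that the event $\{X_{(i)}<Y_{(i)}\ \forall i\}$ depends only on the relative ranks of the pooled sample, invoking exchangeability to make the type pattern uniform, and matching the event to Dyck words via the Catalan count. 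Your approach is more elementary, more transparent, and---as you note---more general: it shows the identity holds for any common continuous distribution, whereas the paper's computation is tied to the exponential density. What the paper's route buys is an explicit (if cumbersome) integral expression for $F(t,n)$ at general $t$, which feeds into their later analysis of the full limiting distribution; your argument gives only the value at $t=1$, which is all that Proposition~\ref{prop1} requires.
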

A proof of Proposition \ref{prop1} is deferred to the Appendix.

\begin{remark}
	\label{remark1}
	It is worth noting that the reciprocal of a unit exponentially distributed random variable is a unit Fr\'{e}chet random variable. Thus, Proposition \ref{prop1} still holds when $X$ and $Y$ are independent and have unit Fr\'{e}chet distribution. As a result, considering unit exponential (Fr\'{e}chet) distribution in quotient based studies is convenient.
\end{remark}

Under the conditions in Proposition \ref{prop1}, $\{X_{(i)}/Y_{(i)},i=1,\ldots,n\}$ and $\{Y_{(i)}/X_{(i)},i=1,\ldots,n\}$ are two sets of realizations derived from the order statistics of $n$ independent unit exponentially distributed random variables. From Proposition \ref{prop1}, the deviation of $q_n$ from $1$ which can be rewritten as:
\begin{equation}
	\label{def:relative}
	q_n-1=\max_{1\leq i\leq n}\frac{X_{(i)}-Y_{(i)}}{Y_{(i)}},
\end{equation}
is asymptotically positive. In \eqref{def:relative}, $q_n-1$ is actually the biggest relative error of $\{X_{(i)},i=1,\ldots,n\}$ from $\{Y_{(i)},i=1,\ldots,n\}$. It measures the nonlinear similarity of the two realizations of order statistics, which should be quite close to each other since they have the same joint distribution.

From a graphical point of view, $q_n-1$ is the maximum difference between the theoretical slope (i.e., 1) and the slope of the line connecting the origin and points on the Q-Q plot of $\{X_{(i)},i=1,\ldots,n\}$ and $\{Y_{(i)},i=1,\ldots,n\}$. All the points lie between two lines passing through origin with slopes $q^{(1)}_n$ and $1/q^{(2)}_n$ respectively in the first quadrant. It is well known that when the two cumulative distribution functions are identical, the Q-Q plot will be the main diagonal, i.e., the 45-degree line through the origin. Thus, the closer $q^{(1)}_n$ and $q^{(2)}_n$ are to 1, the more concentrated these points are around the main diagonal in the Q-Q plot. This idea is illustrated in Figure \ref{mqr}.
We note that the Q-Q plot is often used to plot the empirical or estimated percentiles against the theoretical percentiles. The visual impressions of Q-Q plot can be misleading or not informative in many applications. The Glivenko-Cantelli approximation results from the empirical distribution or the estimated distribution to the theoretical distribution can not be directly converted to the inverse of distributions. In the literature, Lemma 2 of the Supplementary Materials of \cite{zhang2017random} established the condition for the inverse of estimated distributions to be arbitrarily close to their theoretical counterparts. Back to 1953, \cite{Rnyi1953OnTT} discussed and established the approximation results of a distribution function by its sample related distribution using relative errors of distribution functions. Most recently, \cite{McCullagh21} studies a likelihood analysis of quantile-matching transformations between the observed response values to the quantiles of a given target distribution. \cite{TengZhang21} uses the relative error to introduce relative treatment effects in regression models for medical data, and the resulting AbRelaTEs model is much more advanced than the classical logistic regression as it greatly enhances interpretability and truly being personalized medicine computability.

As detailed in Section \ref{motiv}, this paper considers not only the relative errors of order statistics, but also simulated observations along with observations. Consequently, the existing results dealing with absolute errors cannot be applied. The paper intends to establish a new theoretical foundation and a new statistical inference method (indirect inference) which solves the issues discussed in the abstract and introduction.

\subsection{Limit Behavior of Ranked Quotients}
\label{subsec:limit}
Proposition \ref{prop1} gives a general result about the maximum ranked quotient. In this section, the behaviors of ranked quotients are explored based on different indices,
i.e., the limit behaviors of $\max_{i\in\Lambda}X_{(i)}/Y_{(i)}$ are assessed according to different choices of the index set $\Lambda$. First, the index set which satisfies the following assumption is considered.
\begin{condition}
	\label{cond:1}
	Assume $0<\alpha_n<\beta_n<1$, such that
	\begin{equation}
		\label{condition1}
		\min\{\alpha_n, 1-\beta_n\} / \sqrt{\frac{\log n}{n}} \to \infty.
	\end{equation} Let
	\begin{equation}
		\label{setmid}
		\Lambda_m=\{i\in\mathbb{N}:\alpha_nn<i<\beta_nn\}
	\end{equation}
\end{condition}
\noindent where the subscript $m$ stands for the ``middle" part of the ordered sample, and similarly without any ambiguity, the subscripts $l$ and $r$ stand for the ``left'' part and the ``right'' part of the ordered sample respectively throughout the paper.

Condition \ref{cond:1} is quite mild. A wide range of choices for $\alpha_n$ and $\beta_n$ can satisfy this condition. It is obvious that if $\alpha_n$ and $\beta_n$ are positive constants, they satisfy Condition \ref{cond:1}. The index set defined in Condition \ref{cond:1} is the middle part of the entire sequence. $\alpha_n$ and $\beta_n$ represent the truncated proportion at the two ends of the sample. Though, the proportion of the parts left in the two ends to the whole sequence can be as small as approaching to 0, the number of indices left always approaches infinity as $n$ increases as long as $\alpha_n, \beta_n$ satisfy Condition \ref{cond:1}, i.e.,
$$\alpha_nn\to\infty\text{ and }(1-\beta_n)n\to\infty,\text{ as }n\to \infty.$$
\begin{figure}[]
	\begin{minipage}[]{0.49\linewidth}
		\centering
		\includegraphics[width=2.8in]{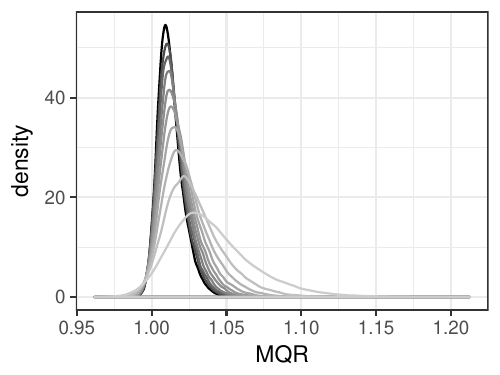}
	\end{minipage}
	\begin{minipage}[]{0.49\linewidth}
		\centering
		\includegraphics[width=2.5in]{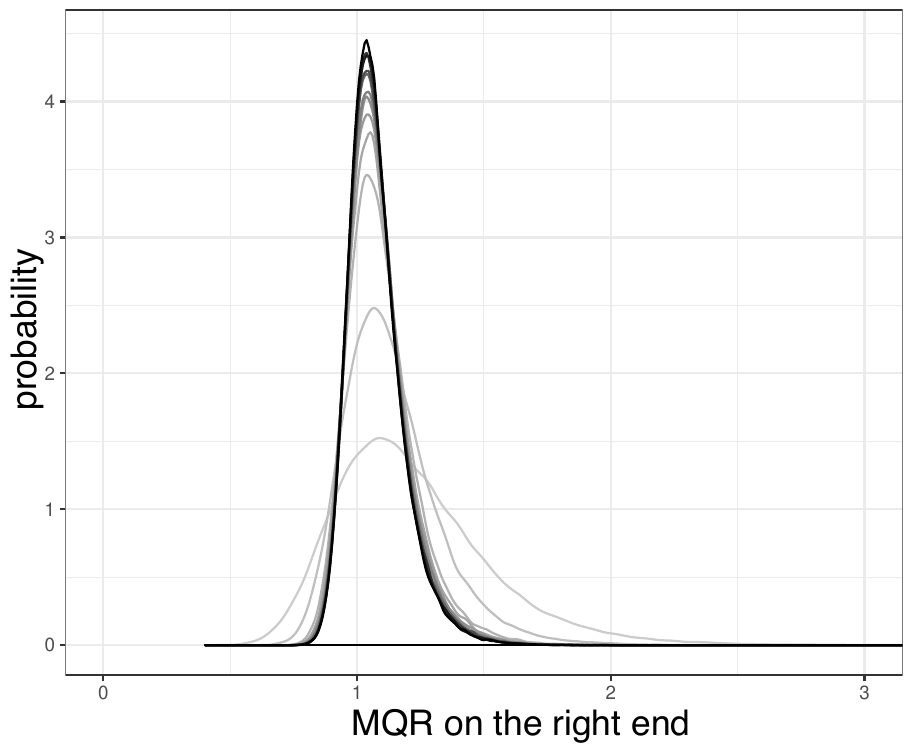}
	\end{minipage}
	\caption{Left panel: Kernel Density of $q_{n,\Lambda_m}$ between two independent unit exponential samples with $\alpha_n=\beta_n=0.1$. The sample sizes increases from 10,000 to 100,000 as the line color darkens. Right panel: Kernel Density of $q_{n,\Lambda_r}$ between two independent unit exponential samples with $k=5$. The sample sizes increases from 100 to 100,000 as the line color darkens. }
	\label{pdfleft}
\end{figure}
Theorem \ref{thm:mid} gives the limit behavior of $\max_{i\in\Lambda}X_{(i)}/Y_{(i)}$ under Condition \ref{cond:1}.

\begin{theorem}
	\label{thm:mid}
	Suppose $\{X_i,i=1,\ldots,n\}$ and $\{Y_i,i=1,\ldots,n\}$ are independent samples from two independent unit exponential random variables $X$ and $Y$ respectively. Let $\{X_{(i)}:X_{(1)}\le X_{(2)}\le\cdots\le X_{(n)}\}$ and $\{Y_{(i)}:Y_{(1)}\le Y_{(2)}\le\cdots\le Y_{(n)}\}$ be order statistics of $\{X_i,i=1,\ldots,n\}$ and $\{Y_i,i=1,\ldots,n\}$ respectively.
	Assume the index set $\Lambda_m$ satisfies Condition \ref{cond:1}. Then
	\begin{equation}
		\label{qmid}
		q_{n,\Lambda_m}\triangleq\max_{i\in\Lambda_m}\{X_{(i)}/Y_{(i)}\} \overset{P}{\to} 1,\text{ as }n\to\infty.
	\end{equation}
\end{theorem}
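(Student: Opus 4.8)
The plan is to establish the slightly stronger statement
$\max_{i\in\Lambda_m}\bigl|X_{(i)}/Y_{(i)}-1\bigr|\overset{P}{\to}0$, which yields \eqref{qmid} at once (and, in passing, the analogous limit for $\max_{i\in\Lambda_m}Y_{(i)}/X_{(i)}$). Since the samples $\{X_i\}$ and $\{Y_i\}$ are independent, I would not compare $X_{(i)}$ and $Y_{(i)}$ to each other directly, but instead compare each of them to the common deterministic centering $\nu_i:=\EE X_{(i)}=\EE Y_{(i)}$, and show that the \emph{relative} deviations $\max_{i\in\Lambda_m}|X_{(i)}/\nu_i-1|$ and $\max_{i\in\Lambda_m}|Y_{(i)}/\nu_i-1|$ both tend to $0$ in probability. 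On the event that both are below a prescribed $\epsilon\in(0,1)$ one has $X_{(i)}/Y_{(i)}\in\bigl(\tfrac{1-\epsilon}{1+\epsilon},\tfrac{1+\epsilon}{1-\epsilon}\bigr)$ for every $i\in\Lambda_m$, so $q_{n,\Lambda_m}$ lies in that interval as well; letting $\epsilon\downarrow0$ gives the theorem.

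For the relative concentration I would use the Rényi representation $\bigl(X_{(1)},\dots,X_{(n)}\bigr)\overset{d}{=}\bigl(\sum_{k=1}^{i}\xi_k/(n-k+1)\bigr)_{i=1}^{n}$, with $\xi_1,\dots,\xi_n$ i.i.d.\ unit exponential (and an independent copy for $Y$). This exhibits $X_{(i)}-\nu_i$ as a centered sum of independent scaled exponentials, with $\nu_i=\sum_{j=n-i+1}^{n}1/j$, $\sigma_i^{2}:=\Rm{Var}(X_{(i)})=\sum_{j=n-i+1}^{n}1/j^{2}$, and largest coefficient $a_i=1/(n-i+1)$. The elementary bounds $\nu_i\ge i/n$ and $\sigma_i^{2}\le i/(n(n-i))$, together with $\min\{i,n-i\}\ge n\min\{\alpha_n,1-\beta_n\}$ and $\max\{i,n-i\}\ge n/2$ for $i\in\Lambda_m$, give
\[
\min\Bigl\{\frac{\nu_i^{2}}{\sigma_i^{2}},\ \frac{\nu_i}{a_i}\Bigr\}\ \ge\ \frac{i(n-i)}{n}\ \ge\ \tfrac12\,n\min\{\alpha_n,1-\beta_n\}.
\]
Bernstein's inequality for sums of independent sub-exponential variables then gives, for fixed $\epsilon\in(0,1)$ and an absolute constant $c>0$,
\[
\PP\bigl(|X_{(i)}-\nu_i|\ge\epsilon\nu_i\bigr)\ \le\ 2\exp\!\Bigl(-c\,\epsilon^{2}\min\bigl\{\nu_i^{2}/\sigma_i^{2},\ \nu_i/a_i\bigr\}\Bigr)\ \le\ 2\exp\!\Bigl(-\tfrac{c\epsilon^{2}}{2}\,n\min\{\alpha_n,1-\beta_n\}\Bigr).
\]
By Condition \ref{cond:1}, $n\min\{\alpha_n,1-\beta_n\}\gg\sqrt{n\log n}\gg\log n$, so the right-hand side is $o(n^{-2})$; a union bound over the (at most $n$) indices in $\Lambda_m$, plus the identical estimate for $Y$, places us on the good event of the previous paragraph with probability $1-o(1)$, which completes the argument.

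I expect the main obstacle to be precisely the \emph{uniformity} over the whole index set $\Lambda_m$ (whose cardinality is of order $n$): a union bound combined with Chebyshev's inequality does not work here, because $\sum_{i\in\Lambda_m}\sigma_i^{2}/\nu_i^{2}$ can be of order $\log(1/\alpha_n)+\log(1/(1-\beta_n))$, hence of order $\log n$, and in any case need not tend to $0$. An exponential tail bound is therefore unavoidable, and the delicate point is that both $\nu_i^{2}/\sigma_i^{2}$ and $\nu_i/a_i$ must exceed $\log n$ \emph{for every} $i\in\Lambda_m$; since each is of order $i(n-i)/n$, whose minimum over $\Lambda_m$ is of order $n\min\{\alpha_n,1-\beta_n\}$, the hypothesis $\min\{\alpha_n,1-\beta_n\}\gg\sqrt{\log n/n}$ in Condition \ref{cond:1} is exactly what keeps the union bound under control. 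For the same reason one must track the relative deviation as a function of $i$ rather than bound $X_{(i)}$ and $\nu_i$ separately by their worst cases over the index range.
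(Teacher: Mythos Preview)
Your argument is correct and complete: the Rényi representation reduces $X_{(i)}-\nu_i$ to a centered sum of independent weighted exponentials, your moment bounds $\nu_i\ge i/n$, $\sigma_i^2\le i/(n(n-i))$, $a_i=1/(n-i+1)$ are all valid, and Bernstein's inequality then gives a tail that decays like $\exp(-c\epsilon^2 n\min\{\alpha_n,1-\beta_n\})$, which under Condition~\ref{cond:1} is $o(n^{-M})$ for every $M$. The union bound over at most $n$ indices and the identical argument for $Y$ finish the proof.

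The paper proceeds quite differently. Instead of the Rényi representation, it uses the binomial duality $\{X_{(r)}\le t\}=\{\sum_i 1_{\{X_i\le t\}}\ge r\}$ to convert order-statistic tail probabilities into binomial tail probabilities, and then applies Hoeffding's inequality. It also centers at the quantile $b_{r,n}=\log(n/(n-r+1))$ rather than at the exact mean, and it splits the index range $\Lambda_m$ into two overlapping pieces (a piece with $\alpha_n$ held constant and $\beta_n$ pushed toward $1$, and a symmetric piece), choosing a different auxiliary tolerance $\delta$ or $\delta_n$ in each piece so that $\tfrac{b_{r,n}+\delta}{b_{r,n}-\delta}\le 1+\varepsilon$ holds throughout. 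Your route is more streamlined: it handles all $i\in\Lambda_m$ in one stroke, makes the role of the hypothesis $\min\{\alpha_n,1-\beta_n\}\gg\sqrt{\log n/n}$ completely transparent through the single quantity $i(n-i)/n$, and your closing paragraph explains exactly why Chebyshev fails and an exponential bound is forced. The paper's approach, on the other hand, uses only Hoeffding for bounded variables (slightly more elementary than Bernstein) and, because it works through the CDF rather than the Rényi structure, is not tied to the exponential distribution in the same way.
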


Theorem \ref{thm:mid} is a law of large numbers for $q_{n,\Lambda_m}$. The convergence rate of $q_{n,\Lambda_m}$ is faster than $1/n$. See the proofs in Appendix for more details.

In Figure \ref{pdfleft} left panel, the density plot of $q_{n,\Lambda_m}$ is drawn by simulation study with $\alpha_n$ and $\beta_n$ being the 10th and 90th percentile respectively. According to the order in which the colors are darkened, the sample size increases from 10,000 to 100,000 by the interval of 10,000. It can be seen that $q_{n,\Lambda_m}$ converges to 1 which is proven in Theorem \ref{thm:mid}.

Theorem \ref{thm:mid} shows that the middle part of the sequence of ranked quotients converge to 1 uniformly in probability. \eqref{qmid} can also be written in the form of the maximum relative error:
\begin{equation}
	\label{qmidequi}
	q_{n,\Lambda_m}-1=\max_{i\in\Lambda_m}\frac{X_{(i)}-Y_{(i)}}{Y_{(i)}} \overset{P}{\to} 0,\text{ as }n\to\infty.
\end{equation}
Combined with Proposition \ref{prop1} which implies $P(q_{n,\Lambda_m}-1>0)\to 1$, Theorem \ref{thm:mid} indicates that the relative errors for $i\in\Lambda_m$ converges to 0 uniformly in probability. It is a strong result since the size of $\Lambda_m$ increases to infinity as $n\to\infty$. It controls the relative distance between $\{X_i\}$ and $\{Y_i\}$ over a wide range. It is consistent with our intuition that as the sample size increases, the data distributes denser on the real line due to that $X_{(i)}$ and $Y_{(i)}$ follow the same distribution for $i=1,\ldots,n.$

From a different viewpoint, the conclusion in Theorem \ref{thm:mid} can be expressed with thresholds instead of index set as follows.
\begin{corollary}
	\label{thm:threshold}
	If $0<u_n<v_n$ satisfy as $n\to\infty$
	\begin{equation}
		\label{eq:uv}
		\min\{F(u_n),1-F(v_n)\}/\sqrt{\frac{\log n}{n}}\to \infty,
	\end{equation}
	where $F(x)=1-e^{-x}$ is the distribution function of unit exponential distribution, then
	\begin{equation}
		q^{(u_n,v_n)}_{n}\triangleq\max_{1\leq i\leq n}\frac{\max\{\min\{v_n,X_{(i)}\},u_n\}}{\max\{\min\{v_n,Y_{(i)}\},u_n\}} \xrightarrow{P} 1.
	\end{equation}
\end{corollary}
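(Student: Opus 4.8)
The plan is to view $q^{(u_n,v_n)}_{n}$ as a ``clipped'' ranked quotient and to reduce it, piece by piece, to Theorem~\ref{thm:mid}. Write $g(x)=\max\{\min\{v_n,x\},u_n\}$ for the projection onto $[u_n,v_n]$, so that $q^{(u_n,v_n)}_{n}=\max_{1\le i\le n}g(X_{(i)})/g(Y_{(i)})$, and set $\alpha_n=F(u_n)$, $\beta_n=F(v_n)$. Then \eqref{eq:uv} is precisely Condition~\ref{cond:1} for this pair, so Theorem~\ref{thm:mid} (and its obvious analogue for any neighbouring middle index set) is available. It suffices to prove, for each fixed $\epsilon\in(0,1)$, that $P(q^{(u_n,v_n)}_{n}>1+\epsilon)\to 0$ and $P(q^{(u_n,v_n)}_{n}<1-\epsilon)\to 0$.

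For the upper bound, the entry point is the elementary observation that $g(X_{(i)})/g(Y_{(i)})>1$ forces both $X_{(i)}>u_n$ (otherwise $g(X_{(i)})=u_n\le g(Y_{(i)})$) and $Y_{(i)}<v_n$ (otherwise $g(Y_{(i)})=v_n\ge g(X_{(i)})$). Under these constraints I would split the indices into three regimes. (a) \emph{Interior:} $u_n\le X_{(i)},Y_{(i)}\le v_n$, where $g(X_{(i)})/g(Y_{(i)})=X_{(i)}/Y_{(i)}$. The interior indices are exactly those with $\max\{\#\{j:X_j\le u_n\},\#\{j:Y_j\le u_n\}\}<i\le\min\{\#\{j:X_j\le v_n\},\#\{j:Y_j\le v_n\}\}$; since \eqref{eq:uv} forces $nF(u_n)\to\infty$ and $n(1-F(v_n))\to\infty$, a Bernstein bound shows that with probability tending to one this set is contained in a middle set $\{i:\alpha'_nn<i<\beta'_nn\}$ with $\alpha'_n\sim F(u_n)$ and $1-\beta'_n\sim 1-F(v_n)$, which still obeys Condition~\ref{cond:1}; Theorem~\ref{thm:mid} then bounds the interior contribution by $1+\epsilon$ with probability tending to one (and the set is empty whenever that middle set is). (b) \emph{Right:} $X_{(i)}>v_n$, where the ratio equals $v_n/\max\{Y_{(i)},u_n\}$, so exceeding $1+\epsilon$ forces $Y_{(i)}<v_n/(1+\epsilon)$; the corresponding event is contained in $\{\#\{j:X_j>v_n\}>\#\{j:Y_j>v_n/(1+\epsilon)\}\}$, a comparison of two independent binomials whose means differ by $n\bigl(e^{-v_n/(1+\epsilon)}-e^{-v_n}\bigr)$. (c) \emph{Left:} $Y_{(i)}<u_n$, where the ratio equals $\min\{X_{(i)},v_n\}/u_n$, so exceeding $1+\epsilon$ forces $X_{(i)}>(1+\epsilon)u_n$; the event is contained in $\{\#\{j:X_j\le(1+\epsilon)u_n\}<\#\{j:Y_j\le u_n\}\}$, with mean gap $n\bigl(e^{-u_n}-e^{-(1+\epsilon)u_n}\bigr)$. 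For (b) and (c) a Chernoff bound drives the probability to $0$.

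For the lower bound I would use the symmetry of the construction. Since $\{X_i\}$ and $\{Y_i\}$ are i.i.d.\ samples, $\tilde q_n:=\max_{1\le i\le n}g(Y_{(i)})/g(X_{(i)})$ has the same distribution as $q^{(u_n,v_n)}_{n}$, so the upper bound just established gives $P(\tilde q_n>1+\epsilon)\to 0$. For every index $i$ we have $q^{(u_n,v_n)}_{n}\ge g(X_{(i)})/g(Y_{(i)})$ and $\tilde q_n\ge g(Y_{(i)})/g(X_{(i)})$, hence $q^{(u_n,v_n)}_{n}\,\tilde q_n\ge 1$, i.e.\ $q^{(u_n,v_n)}_{n}\ge 1/\tilde q_n$. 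Therefore $P(q^{(u_n,v_n)}_{n}<1-\epsilon)\le P\bigl(\tilde q_n>(1-\epsilon)^{-1}\bigr)\le P(\tilde q_n>1+\epsilon)\to 0$, and combining the two bounds gives $q^{(u_n,v_n)}_{n}\xrightarrow{P}1$.

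The hard part is the two tail regimes (b) and (c): one must check that the mean gaps $n\bigl(e^{-v_n/(1+\epsilon)}-e^{-v_n}\bigr)$ and $n\bigl(e^{-u_n}-e^{-(1+\epsilon)u_n}\bigr)$ dominate the associated binomial standard deviations uniformly in $n$, treating separately the cases where $u_n,v_n$ stay bounded and where $u_n$ or $v_n$ diverges. This is exactly the role of the $\sqrt{(\log n)/n}$ normalization in \eqref{eq:uv}: it makes $nF(u_n)$ and $n(1-F(v_n))$ grow at least like $\sqrt{n\log n}$, which is just enough to kill these tail probabilities; without it the clipped quotient would occasionally inherit a spurious large ratio $v_n/u_n$ from a few extreme order statistics, and the conclusion would fail. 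The interior reduction (a) is, by contrast, routine given Theorem~\ref{thm:mid}.
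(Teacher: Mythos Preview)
Your argument is correct, but it takes a noticeably more laborious route than the paper's. The paper does not split into interior/left/right regimes at all. Instead it chooses a single deterministic middle set by setting $\alpha_n=F(u_n)/2$ and $\beta_n=(1+F(v_n))/2$ (which clearly still satisfies Condition~\ref{cond:1}), and proves that the event
\[
E_n=\Bigl\{X_{(\lfloor\alpha_n n\rfloor)}\le u_n<v_n\le X_{(\lceil\beta_n n\rceil)},\ Y_{(\lfloor\alpha_n n\rfloor)}\le u_n<v_n\le Y_{(\lceil\beta_n n\rceil)}\Bigr\}
\]
has probability tending to $1$, using only the Donsker/Glivenko--Cantelli approximation of $\widehat F_n$ by $F$ together with \eqref{eq:uv}. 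On $E_n$ every clipped ratio with index $i\notin\Lambda_m$ equals $u_n/u_n$ or $v_n/v_n$, hence is exactly $1$; for $i\in\Lambda_m$ the elementary monotonicity bound $g(a)/g(b)\le\max\{a/b,1\}$ gives $q_n^{(u_n,v_n)}\le\max\{q_{n,\Lambda_m},1\}$, and Theorem~\ref{thm:mid} finishes. Thus a single high-probability event replaces both your tail regimes (b) and (c) and the Bernstein localisation in (a).

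What your approach buys is self-containment: the binomial comparisons in (b) and (c) make explicit exactly how the $\sqrt{(\log n)/n}$ rate in \eqref{eq:uv} is consumed, whereas the paper hides this inside the weak convergence of $\sqrt n(\widehat F_n-F)$. What the paper's approach buys is economy and the avoidance of the case analysis on whether $u_n,v_n$ stay bounded or diverge, which you rightly flag as the ``hard part''. Your symmetry trick for the lower bound, $q_n^{(u_n,v_n)}\tilde q_n\ge 1$, is a clean touch that the paper leaves implicit.
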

In the corollary, threshold values instead of the rank are used to truncate the sample. The condition \eqref{eq:uv} in the corollary is quite similar to Condition \ref{cond:1} only with $\alpha_n$ and $\beta_n$ replaced by $F(u_n)$ and $1-F(v_n)$. $\alpha_n$ and $\beta_n$ are the empirical cumulative probability of the $\alpha_nn$th and $\beta_nn$th elements in the ordered sample. $F(u_n)$ and $1-F(v_n)$ can be viewed as the theoretical cumulative probability of the $\alpha_nn$th and $\beta_nn$th order statistics. Therefore, the equivalence between Theorem \ref{thm:mid} and Corollary \ref{thm:threshold} can be understood from the fact that as $n\to\infty$, the empirical cumulative distribution function uniformly converges to the theoretical distribution function.

The next two theorems deal with the limit behaviors of the two ends of the sequence of ranked quotients.

\begin{theorem}
	\label{thm:right}
	Suppose $\{X_{(i)},i=1,\ldots,n\}$ and $\{Y_{(i)},i=1,\ldots,n\}$ are two samples of the order statistics of n independent unit exponential random variables.
	Then, for any arbitrary positive integer $k$, as $n\to\infty$,
	\begin{equation}
		\label{qright}
		q_{n,\Lambda_r}\triangleq\max_{i\in\Lambda_r}\{X_{(i)}/Y_{(i)}\}\overset{P}{\to}1,
	\end{equation}
	where $$\Lambda_r=\{i\in\mathbb{N}:n-k+1\leq i\leq n\}.$$
\end{theorem}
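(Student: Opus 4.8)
The plan is to exploit the classical Rényi (spacings) representation of exponential order statistics to show that each of the finitely many top order statistics grows like $\log n$ with only a tight additive fluctuation, and then to deduce that a ratio of two such quantities — and the maximum of the $k$ such ratios indexed by $\Lambda_r$ — converges to $1$ in probability.

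First I would record that, since the normalized spacings $(n-i+1)\bigl(X_{(i)}-X_{(i-1)}\bigr)$ (with $X_{(0)}=0$) are i.i.d.\ unit exponential, one has the representation $X_{(n-l)}\overset{d}{=}\sum_{m=l+1}^{n}E_m/m$ for $l=0,1,\dots,k-1$, where $\{E_m\}$ are i.i.d.\ unit exponential; the same holds for the independent $Y$-sample with an independent copy of the $E_m$'s. From this, with $H_n=\sum_{m=1}^n 1/m$ the $n$th harmonic number, $\mathbb{E}\,X_{(n-l)}=H_n-H_l=\log n+O(1)$ and $\mathrm{Var}\,X_{(n-l)}=\sum_{m=l+1}^{n}m^{-2}\le \pi^2/6$, both bounds holding uniformly in $n$ and in $l\in\{0,\dots,k-1\}$. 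By Chebyshev's inequality this yields tightness of $X_{(n-l)}-\log n$, hence $X_{(n-l)}/\log n\overset{P}{\to}1$ for each fixed $l$, and likewise $Y_{(n-l)}/\log n\overset{P}{\to}1$.

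Next, for each fixed $l$ write $X_{(n-l)}=\log n\,(1+\xi_{n,l})$ and $Y_{(n-l)}=\log n\,(1+\eta_{n,l})$ with $\xi_{n,l},\eta_{n,l}\overset{P}{\to}0$; then $X_{(n-l)}/Y_{(n-l)}=(1+\xi_{n,l})/(1+\eta_{n,l})\overset{P}{\to}1$ by the continuous mapping theorem, the denominator being bounded away from $0$ with probability tending to $1$. Since $k$ is a fixed integer, $\Lambda_r$ has exactly $k$ elements, so a union bound closes the argument: for any $\varepsilon>0$, $P(q_{n,\Lambda_r}>1+\varepsilon)\le\sum_{l=0}^{k-1}P\bigl(X_{(n-l)}/Y_{(n-l)}>1+\varepsilon\bigr)\to 0$, while $P(q_{n,\Lambda_r}<1-\varepsilon)\le P\bigl(X_{(n)}/Y_{(n)}<1-\varepsilon\bigr)\to 0$ because $q_{n,\Lambda_r}\ge X_{(n)}/Y_{(n)}$; together these give \eqref{qright}.

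The argument is essentially routine; the only point requiring genuine care is the uniform-in-$l$ tightness of $X_{(n-l)}-\log n$ and $Y_{(n-l)}-\log n$, which is exactly where the Rényi representation and the convergence $\sum_{m\ge 1}m^{-2}<\infty$ do the work. I would also remark that the conclusion can be strengthened to almost sure convergence: Kolmogorov's two-series theorem applied to $\sum_m (E_m-1)/m$ shows $X_{(n-l)}-H_n$ converges a.s., so $X_{(n-l)}/\log n\to 1$ a.s.\ and, the maximum being over finitely many indices, $q_{n,\Lambda_r}\to 1$ a.s.
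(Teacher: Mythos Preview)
Your argument is correct and is genuinely different from the paper's. The paper proceeds by invoking the joint weak limit of the top $k$ normalized exponential order statistics: with $a_n=1$, $b_n=\log n$, the vector $(X_{(n)}-\log n,\ldots,X_{(n-k+1)}-\log n)$ has a density $g_n$ increasing to the Gumbel-type joint limit density $g$, and $\PP(q_{n,\Lambda_r}<t)$ is written as an integral of $g_n(w)g_n(z)$ over a region $D(n,t)$ that expands to the full domain for $t>1$ and collapses to $\varnothing$ for $t<1$; dominated convergence then gives the result. Your route avoids this machinery entirely: the R\'enyi representation plus a uniform variance bound and Chebyshev give tightness of $X_{(n-l)}-\log n$ directly, after which continuous mapping and a finite union bound finish the job. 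Your approach is shorter and more elementary; the paper's buys a structural framework (joint limiting densities over shrinking/expanding regions) that is reused verbatim in the proof of Theorem~\ref{thm:left}, where the analogous integral over a fixed region $D(t)$ yields the non-degenerate left-end limit.

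One small caveat on your closing remark about almost sure convergence: the R\'enyi representation $X_{(n-l)}\overset{d}{=}\sum_{m=l+1}^n E_m/m$ is a distributional identity for each fixed $n$, not an almost sure coupling across $n$, so the two-series argument as written does not immediately deliver a.s.\ convergence of $X_{(n-l)}-H_n$ along $n\to\infty$. The a.s.\ statement is true (e.g.\ by a subsequence plus monotonicity argument for $X_{(n)}$, then handling the fixed shift to $X_{(n-l)}$), but it needs a different justification. This does not affect your main proof, which only claims convergence in probability.
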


Theorem \ref{thm:right} is the law of large numbers for $q_{n,\Lambda_r}$. Compared with increasing index set $\Lambda_m$ in Theorem \ref{thm:mid}, the size of index set $\Lambda_r$ is unchanged in Theorem \ref{thm:right}. The fixed number of ranked quotients on the right end of the sequence converge uniformly to 1 in probability. In Figure \ref{pdfleft} right panel, it can be observed that $q_{n,\Lambda_r}$ converges to 1.
%	\begin{figure}[]
%		\centering
%		\includegraphics[width=2.5in]{figures/mqrrightnew.pdf}
%		\caption{Kernel Density of $q_{n,\Lambda_r}$ between two independent unit exponential samples with $k=5$. The sample sizes increases from 100 to 100,000 as the line color darkens. }
%		\label{pdfright}
%	\end{figure}

For the left end, the following theorem holds.

\begin{theorem}
	\label{thm:left}
	Suppose $\{X_{(i)},i=1,\ldots,n\}$ and $\{Y_{(i)},i=1,\ldots,n\}$ are two samples of the order statistics of n independent unit exponential random variables.
	Then, for any arbitrary positive integer $\ell$, let $\Lambda_l=\{i\in\mathbb{N}: 1\leq i\leq \ell\}$, then $q_{n,\Lambda_l}\triangleq\max_{i\in\Lambda_l}\{X_{(i)}/Y_{(i)}\}$ converges weakly to a random variable with a non-degenerate distribution:
$$ \mathcal{R}_{\ell}(t)=\lim_{n \to \infty} \mathcal{R}_{n,\ell}(t) = \lim_{n \to \infty}P_{n}(\max_{1 \leq i \leq \ell}\{\frac{X_{(i)}}{Y_{(i)}}\}\leq t), t \geq 0, \ell \in \mathbb{Z}^{+}, \ell \leq n$$
and
$$
R_{\ell}(t)=\frac{\sum_{j=0}^{\ell-1}(-1)^j(B_{\ell-1,j}\cdot k^{\ell-j-1})}{k^{2\ell-1}}, k=1+1/t, t\geq 0
$$
where $B_{\ell,j}=\frac{\binom{2\ell+2}{\ell-j}\binom{\ell+j}{\ell}}{\ell+1}$.
\end{theorem}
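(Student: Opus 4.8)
The plan is to pass to a Poisson limit, recast the limiting probability as a lattice-path count, and then evaluate the resulting generating function.

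First I would invoke R\'enyi's representation of exponential order statistics: $X_{(i)}\overset{d}{=}\sum_{j=1}^{i}E_j^X/(n-j+1)$ with $E_1^X,E_2^X,\dots$ i.i.d.\ unit exponential, and independently the analogous statement for $Y$. Then $nX_{(i)}=\sum_{j=1}^{i}\frac{n}{\,n-j+1\,}E_j^X$, and since each coefficient tends to $1$, the vector $(nX_{(1)},\dots,nX_{(\ell)})$ converges (it is a fixed continuous function of $E_1^X,\dots,E_\ell^X$) to $(\Gamma_1^X,\dots,\Gamma_\ell^X)$, where $\Gamma_i^X:=\sum_{j=1}^{i}E_j^X$; likewise for $Y$, independently. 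Because the limiting denominators $\Gamma_i^Y$ are a.s.\ positive, the continuous mapping theorem yields $q_{n,\Lambda_l}=\max_{i\le\ell}\{nX_{(i)}/nY_{(i)}\}\overset{d}{\to}Q_\ell:=\max_{1\le i\le\ell}\Gamma_i^X/\Gamma_i^Y$. Since $Q_\ell$ has a continuous distribution, $\mathcal R_{n,\ell}(t)\to\mathcal R_\ell(t):=\PP(Q_\ell\le t)$ for all $t\ge 0$, so it remains to show $\mathcal R_\ell(t)=R_\ell(t)$.

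Next I would rescale time: write $t=1/(k-1)$, so $k=1+1/t$. Then $\{\Gamma_i^X/t\}$ is a rate-$t$ Poisson process, $\{\Gamma_i^Y\}$ a rate-$1$ Poisson process, independent, and in their superposition each point is, independently, of type $X$ with probability $p_X:=t/(t+1)=1/k$ and of type $Y$ with probability $p_Y:=1-1/k$. The event $\{\Gamma_i^X\le t\,\Gamma_i^Y,\ 1\le i\le\ell\}$ is exactly the event that for every $i\le\ell$ the $i$-th $X$-point precedes the $i$-th $Y$-point, equivalently that the $\pm1$ walk read off the type sequence ($+1$ for $X$, $-1$ for $Y$) stays nonnegative through its $\ell$-th down-step (between consecutive down-steps the walk only increases, so nonnegativity right after each down-step suffices; ties have probability $0$). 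Conditioning on the number of steps up to that down-step gives
\[
\mathcal R_\ell(t)=p_Y^{\ell}\sum_{r\ge\ell}c_{r,\ell}\,p_X^{r},
\]
where $c_{r,\ell}$ counts words with $r$ letters $X$ and $\ell$ letters $Y$ that end in $Y$ and whose every prefix has at least as many $X$'s as $Y$'s. Deleting the final $Y$ and applying the reflection principle, $c_{r,\ell}=\binom{r+\ell-1}{\ell-1}-\binom{r+\ell-1}{\ell-2}$ for $r\ge\ell$ (a ballot number).

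Finally I would evaluate the series. Using $\sum_{r\ge0}\binom{r+m}{m}u^r=(1-u)^{-(m+1)}$ together with $1-p_X=p_Y$, the sum becomes an explicit rational function of $k$; clearing the factor $k^{2\ell-1}$ should leave the finite alternating polynomial $\sum_{j=0}^{\ell-1}(-1)^jB_{\ell-1,j}k^{\ell-j-1}$, with the $B_{\ell-1,j}$ recognizable as entries of the Catalan triangle ($B_{\ell,0}$ being the $(\ell+1)$-st Catalan number). I have checked this matches the stated $R_\ell$ for $\ell=1,2,3$. The R\'enyi reduction is routine and the superposition/ballot argument is the conceptual heart; the genuine obstacle is this last step, namely converting the ballot-number generating function into the stated coefficients $B_{\ell-1,j}$ while correctly handling the truncation at $r=\ell$ (the boundary terms $r=0,\dots,\ell-1$ must be subtracted before the closed form applies). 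I would carry it out either by partial-fraction/coefficient extraction in $k$, or by induction on $\ell$ via a first-passage decomposition of the walk at successive down-steps, which should produce a Pascal-type recurrence for the $B_{\ell,j}$.
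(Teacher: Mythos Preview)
Your approach is correct and genuinely different from the paper's. The R\'enyi representation gives the limit $(nX_{(1)},\dots,nX_{(\ell)},nY_{(1)},\dots,nY_{(\ell)})\to(\Gamma_1^X,\dots,\Gamma_\ell^X,\Gamma_1^Y,\dots,\Gamma_\ell^Y)$ cleanly, and the Poisson superposition/thinning step turning $\{\Gamma_i^X\le t\,\Gamma_i^Y\ \forall i\le\ell\}$ into a biased $\pm1$ ballot problem is exactly right: the resulting series $p_Y^{\ell}\sum_{r\ge\ell}c_{r,\ell}p_X^{r}$ with $c_{r,\ell}=\binom{r+\ell-1}{\ell-1}-\binom{r+\ell-1}{\ell-2}$ does reduce to $R_\ell(t)$ (I checked your $\ell=1,2,3$ and also carried the computation through for $\ell=3$ directly from your generating function, obtaining $(5k^2-6k+2)/k^5$ on the nose). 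The only remaining work is the polynomial identity in $k$ for general $\ell$, which you correctly flag; a clean route is to write $\mathcal R_\ell=\mathcal R_{\ell-1}-p_Y^{\ell}\sum_{r\ge\ell-1}c_{r,\ell-1}^{\circ}p_X^{r}$ via a first-passage decomposition at the $\ell$-th down-step and then read off a Pascal-type recursion that pins down the $B_{\ell-1,j}$.

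By contrast, the paper proceeds in two very different stages. For existence it passes to unit Fr\'echet (reciprocals of exponentials), applies the classical joint limit for the top $k$ order statistics, and uses dominated convergence on the resulting integral. For the explicit formula it never takes a Poisson limit: it sets up finite-$n$ recursions for $r_{n,m}$ and $\mathcal R_{n,m}$, rewrites them in auxiliary quantities $Q_{n,m}$ and $T_{n,m}(a)$, performs a Laurent expansion in $1/n$, and tracks the leading coefficient through a sequence of Catalan-triangle identities (Lemma~\ref{3.18}, Theorem~\ref{th5}) before finally converting Catalan numbers into the Borel triangle $B_{m,j}$. This machinery---a dozen lemmas and several inductive expansions---yields the same closed form but at far greater cost. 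Your route replaces all of that with one probabilistic identification (ballot numbers) plus one combinatorial identity; the trade-off is that the paper's recursion is entirely self-contained from the joint density, whereas your argument imports the thinning property of Poisson processes and the reflection principle. Given how standard those ingredients are, your approach is a substantial simplification.
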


Theorem \ref{thm:left} describes the limit behavior of the maxima of a fixed number of ranked quotients on the left hand of the sequence. Instead of converging to 1, it converges to a non-degenerate random variable.
This phenomenon may also be observed from Figure \ref{mqr}.
A numerical study generates the probability density function of the distribution in Figure \ref{pdfleftden}. The proof of this theorem is much involved with combinatorial mathematics and twelve lemmas and theorems. The proof is given in Appendix

\begin{figure}[]
	\begin{minipage}[]{0.49\linewidth}
		\centering
		\includegraphics[width=2.8in]{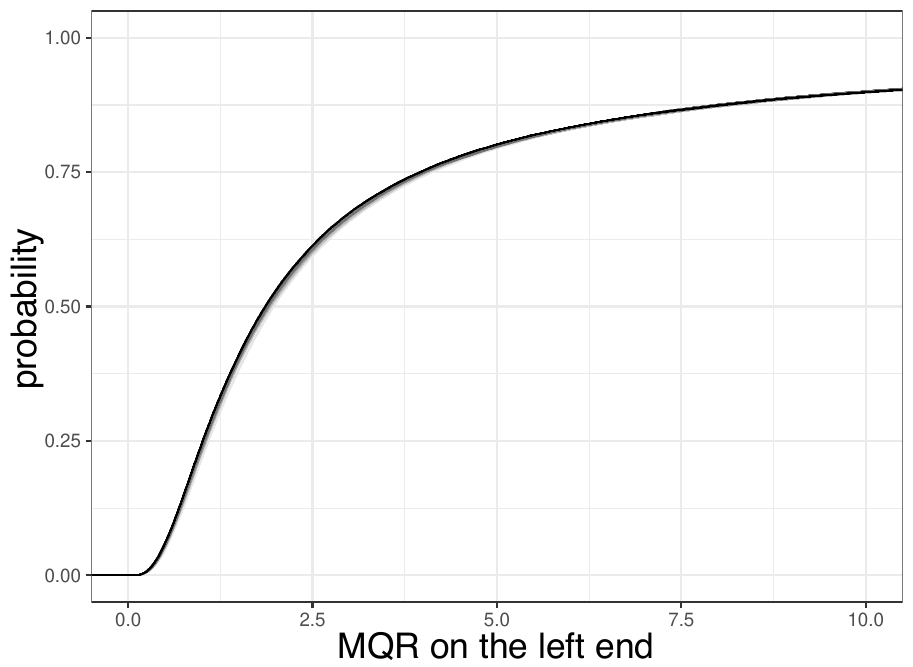}
	\end{minipage}
	\begin{minipage}[]{0.49\linewidth}
		\centering
		\includegraphics[width=2.8in]{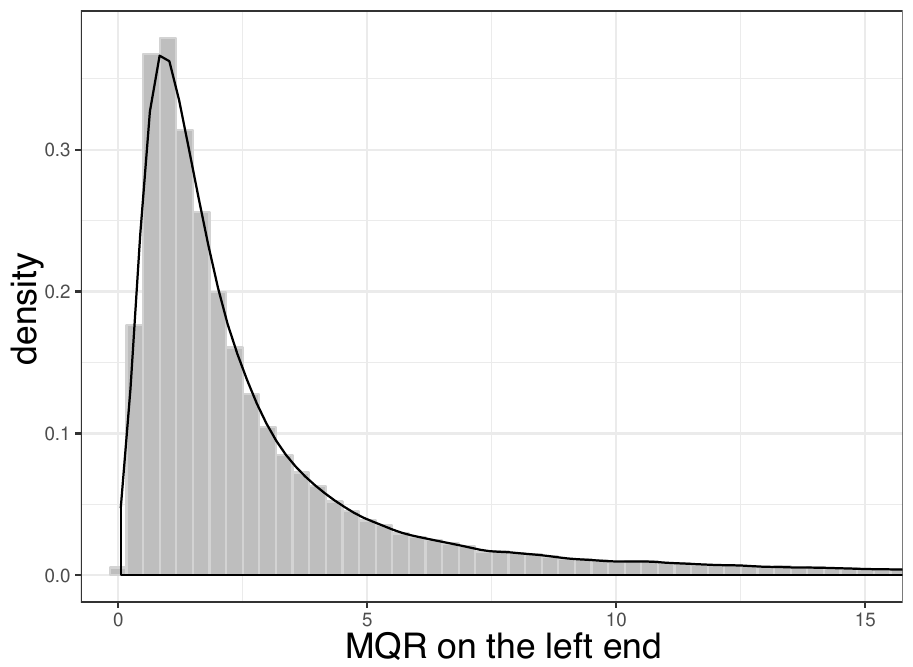}
	\end{minipage}
	\caption{The left panel is the empirical cumulative distribution function of maximum ranked quotient between two independent unit exponential samples on the left end with $\Lambda=\{1,2,3,4,5\}$. The sample sizes are 10, 15, 20, 25, 100, 1000 as the line color darkens. The right plot is the empirical density function of the data with sample size 1000. }
	\label{pdfleftden}
\end{figure}
In the left panel of Figure \ref{pdfleftden}, it is obvious that the distribution of $q_{n,\Lambda_l}$ converges quickly to a non-degenerate random variable as Theorem \ref{thm:left} states. According to Figure \ref{pdfleftden} and the extremely high empirical kurtosis and skewness, the distribution of $q_{n,\Lambda_l}$ is heavy-tailed. This observation results from the property of the function $f(x,y)=x/y$ around the origin. It is much more sensitive to the value $y$ than $x$, thus may generate some very large quotients when $y$ is smaller than $x$ by a small value.

Observing Figure \ref{mqr} again, we can see that Theorem \ref{thm:left} and Figure \ref{pdfleftden} indicate that the relative distance has a heavy-tailed distribution on the left end, while one cannot tell any visual differences of the left-end points distributed on the diagonal in the right panel of Figure \ref{mqr}. This is because the distance observed in the Q-Q plot is the absolute distance, not relative distance. This argument also applies to the right end where the absolute distance is large and the relative distance is proved to converge to 0 in Theorem \ref{thm:right}. This phenomenon suggests that the visual impression of Q-Q plots can be inaccurate.

Theorems \ref{thm:mid}-\ref{thm:left} show the left part, the middle part, and the right part have different limit forms and different convergence rates. As a result, the classical extreme value theory is not applicable to three parts together for them to weakly converge to a well-defined random variable. We note that
examples of maxima that do not have an extreme value limit distribution include Poisson random variables and random variables with an absorbing probability at the right end support points.

Before closing this section, we consider the case where $\{X_{(i)},i=1,\ldots,n\}$ and $\{Y_{(i)},i=1,\ldots,n\}$ are not sampled from unit exponential distribution. The following theorem deals with a deviation of the assumed distribution from the exponential distribution, i.e., we assume an arbitrary continuous smooth distribution with positive support for at least one of the two random variables $X$ and $Y$.
\begin{theorem}
	\label{thm:notexp}
	Suppose $\{X_{(i)},i=1,\ldots,n\}$ and $\{Y_{(i)},i=1,\ldots,n\}$ are the order statistics sampled from two different distributions with monotone increasing, continuous cumulative distribution functions $F$ and $G$. Then the following two limits cannot hold simultaneously
	\begin{equation}
		\label{twoq}
		q^{(1)}_{n,\Lambda_m}\triangleq\max_{i\in\Lambda_m}\{X_{(i)}/Y_{(i)}\}\overset{P}{\to}1\quad\text{and}\quad q^{(2)}_{n,\Lambda_m}\triangleq\max_{i\in\Lambda_m}\{Y_{(i)}/X_{(i)}\}\overset{P}{\to}1,\quad\text{as } n\to\infty
	\end{equation}
	where $$\alpha_n=\sqrt{\frac{\log n\log\log n}{n}},\quad \beta_n=1-\alpha_n$$ in $\Lambda_m$.
\end{theorem}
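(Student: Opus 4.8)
The plan is to argue by contradiction. Suppose both convergences in \eqref{twoq} hold; I will show this forces $F\equiv G$, contradicting the hypothesis that the two distributions are different. The bridge between the ranked quotients and the distribution functions is the quantile transform together with the elementary fact that a single central order statistic is consistent for the corresponding population quantile, so no uniform control over $\Lambda_m$ is actually needed.

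Fix an arbitrary $p\in(0,1)$ and set $i_n=\lceil pn\rceil$. Since $\alpha_n=\sqrt{\log n\log\log n/n}\to 0$ while $n\alpha_n\to\infty$ --- and only these two properties of $\alpha_n$ are used --- one checks that $\alpha_n n<i_n<\beta_n n$ for all large $n$, so $i_n\in\Lambda_m$. By the definition of the maxima, $1/q^{(2)}_{n,\Lambda_m}\le X_{(i_n)}/Y_{(i_n)}\le q^{(1)}_{n,\Lambda_m}$, and the assumed limits squeeze both ends to $1$, whence $X_{(i_n)}/Y_{(i_n)}\overset{P}{\to}1$. On the other hand, represent $X_{(i_n)}\overset{d}{=}F^{-1}(U_{(i_n)})$ and $Y_{(i_n)}\overset{d}{=}G^{-1}(V_{(i_n)})$, where $U_{(i_n)}$ and $V_{(i_n)}$ are the $i_n$-th order statistics of uniform samples; a one-line Beta-variance bound gives $U_{(i_n)}\overset{P}{\to}p$ and $V_{(i_n)}\overset{P}{\to}p$. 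Because $F$ and $G$ are continuous and strictly increasing, $F^{-1}$ and $G^{-1}$ are continuous, so the continuous mapping theorem yields $X_{(i_n)}\overset{P}{\to}F^{-1}(p)$ and $Y_{(i_n)}\overset{P}{\to}G^{-1}(p)$, and (using positivity of the supports, so $G^{-1}(p)>0$) $X_{(i_n)}/Y_{(i_n)}\overset{P}{\to}F^{-1}(p)/G^{-1}(p)$. Uniqueness of the probability limit gives $F^{-1}(p)=G^{-1}(p)$. Here it is essential that \emph{both} limits in \eqref{twoq} are assumed: the $q^{(1)}$-bound alone only yields $F^{-1}(p)\le G^{-1}(p)$, and the $q^{(2)}$-bound alone only $G^{-1}(p)\le F^{-1}(p)$. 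As $p\in(0,1)$ is arbitrary, $F^{-1}\equiv G^{-1}$ on $(0,1)$, hence $F\equiv G$, which is the desired contradiction.

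I do not anticipate a substantial obstacle; the argument is elementary once the quantile representation is written down. The only points needing a little care are: verifying $i_n\in\Lambda_m$ for large $n$ from the specified $\alpha_n,\beta_n$; pinning down the meaning of ``two different distributions'' (i.e.\ $F\ne G$ as functions, equivalently $F^{-1}\ne G^{-1}$ at some point, both being continuous and strictly increasing); and, if one prefers not to invoke positivity of the supports, noting that $G^{-1}$ can vanish at at most one $p$, so the ratio argument still applies for all other $p$ and the equality $F^{-1}=G^{-1}$ extends to the exceptional point by continuity.
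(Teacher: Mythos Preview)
Your argument is correct and, in fact, cleaner than the paper's. Both proofs proceed by contradiction, but the routes differ.

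The paper fixes a single level $y^\ast$ where $F^{-1}(y^\ast)<G^{-1}(y^\ast)$, thickens it to an interval $(y_0,y_1)$ with $F^{-1}(y_1)<G^{-1}(y_0)$, and then uses the Glivenko--Cantelli theorem twice: first to guarantee that some $X_{(i)}$ with $i\in\Lambda_m$ falls in $(F^{-1}(y_0),F^{-1}(y_1))$ with probability tending to one, and second to force the matching $Y_{(i)}$ to lie near $(G^{-1}(y_0),G^{-1}(y_1))$, so that $Y_{(i)}/X_{(i)}$ is bounded below by $1+\varepsilon$ for an explicit $\varepsilon=\tfrac12(G^{-1}(y_0)/F^{-1}(y_1)-1)$. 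This yields a concrete $\varepsilon$ at which $q^{(2)}_{n,\Lambda_m}$ fails to approach $1$.

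Your approach sidesteps the uniform empirical-process machinery entirely: you pick one deterministic index $i_n=\lceil pn\rceil\in\Lambda_m$, sandwich $X_{(i_n)}/Y_{(i_n)}$ between $1/q^{(2)}_{n,\Lambda_m}$ and $q^{(1)}_{n,\Lambda_m}$, and invoke only the pointwise consistency of a single central order statistic (a Beta-variance or Chebyshev bound suffices). Since convergence in probability to a constant is determined by the marginal law, the $\overset{d}{=}$ in the quantile representation causes no trouble, and independence of the two samples gives joint convergence of $(X_{(i_n)},Y_{(i_n)})$ to the pair of constants. What you gain is economy: no Glivenko--Cantelli, no event decomposition, and the role of needing \emph{both} limits in \eqref{twoq} is made transparent through the two-sided squeeze. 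What the paper's version gains is an explicit quantitative $\varepsilon$, though that is not used elsewhere.
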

Theorem \ref{thm:notexp} proves that $\{X_{(i)},i=1,\ldots,n\}$ and $\{Y_{(i)},i=1,\ldots,n\}$ are sampled from the same distribution is the necessary condition for \eqref{twoq}, while may not be sufficient condition since we only prove the converse result for exponential distribution. On the other hand, the following Theorem \ref{cor:iff} provides a necessary and sufficient result when both distributions are unit exponential distribution.
\begin{theorem}
	\label{cor:iff}
	If $\{X_{(i)},i=1,\ldots,n\}$ is an ordered sample from a unit exponential distribution, then
	\eqref{twoq}\text{ holds} if and only if $\{Y_{(i)},i=1,\ldots,n\}$  is an ordered sample from a unit exponential distribution.
\end{theorem}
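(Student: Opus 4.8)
The plan is to establish this theorem as a direct consequence of two results already in hand: Theorem~\ref{thm:mid} for the ``if'' direction and Theorem~\ref{thm:notexp} for the ``only if'' direction. The statement is essentially a packaging of these two, so the proof should be short; the only points requiring genuine care are (a) checking that the specific truncation levels $\alpha_n=\sqrt{(\log n\log\log n)/n}$ and $\beta_n=1-\alpha_n$ appearing in \eqref{twoq} satisfy Condition~\ref{cond:1}, and (b) a brief argument covering the case where $Y$ fails to be a continuous random variable.

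For the ``if'' direction, assume $\{Y_{(i)},i=1,\ldots,n\}$ is an ordered sample from a unit exponential distribution. Then $\{X_i\}$ and $\{Y_i\}$ are independent samples from two independent unit exponential random variables, which is exactly the hypothesis of Theorem~\ref{thm:mid}. First I would verify Condition~\ref{cond:1}: since $\alpha_n/\sqrt{(\log n)/n}=\sqrt{\log\log n}\to\infty$ and $1-\beta_n=\alpha_n$, we get $\min\{\alpha_n,1-\beta_n\}/\sqrt{(\log n)/n}\to\infty$, and for all large $n$ we have $0<\alpha_n<\beta_n<1$; hence this $\Lambda_m$ satisfies Condition~\ref{cond:1}. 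Theorem~\ref{thm:mid} then yields $q^{(1)}_{n,\Lambda_m}\overset{P}{\to}1$. Because $(X_i,Y_i)$ and $(Y_i,X_i)$ have the same joint distribution (both coordinates being i.i.d.\ unit exponential), applying Theorem~\ref{thm:mid} with the roles of $X$ and $Y$ interchanged gives $q^{(2)}_{n,\Lambda_m}\overset{P}{\to}1$. Thus \eqref{twoq} holds.

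For the ``only if'' direction, suppose \eqref{twoq} holds and $\{X_{(i)}\}$ is an ordered unit exponential sample, so $X$ has cdf $F(x)=1-e^{-x}$, which is continuous and strictly increasing on its support. Let $G$ be the distribution function of $Y$. If $G$ is continuous and strictly increasing with $G\neq F$, then $X$ and $Y$ are sampled from two different distributions meeting the hypotheses of Theorem~\ref{thm:notexp}, and that theorem states the two limits in \eqref{twoq} cannot both hold --- contradicting our assumption. Hence $G=F$, i.e.\ $Y$ is unit exponential. To dispose of the remaining possibility that $G$ is not strictly increasing or has an atom, I would give a short direct argument: in that case there is a fixed non-degenerate interval of probability levels on which the quantile function of $Y$ is constant or jumps, so the middle-range order statistics $Y_{(i)}$, $i\in\Lambda_m$, concentrate near a single value (or a finite set), while the corresponding $X_{(i)}$ spread out over a range that grows with $n$; consequently either $\max_{i\in\Lambda_m}X_{(i)}/Y_{(i)}$ or its reciprocal stays bounded away from $1$ with probability tending to $1$, contradicting \eqref{twoq}. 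Alternatively, under the paper's standing assumption that all random variables are continuous, this case does not arise and can simply be excluded.

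The main obstacle is not in the logical skeleton --- both halves reduce to already-established theorems --- but in the bookkeeping: one must confirm that Theorem~\ref{thm:mid} really does apply to the aggressive truncation $\alpha_n\sim\sqrt{(\log n\log\log n)/n}$ used in Theorem~\ref{thm:notexp} (it does, with room to spare, since the rate demanded by Condition~\ref{cond:1} is only $\sqrt{(\log n)/n}$), and in making the necessity direction watertight when $Y$ is permitted to be non-continuous. Everything else --- the exchangeability/symmetry step in sufficiency and the contrapositive of Theorem~\ref{thm:notexp} in necessity --- is immediate.
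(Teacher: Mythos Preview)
Your proposal is correct and matches the paper's approach: the paper itself does not write out a proof but simply remarks that Theorem~\ref{cor:iff} ``is straightforward using the results from the established theoretical results in earlier theorems,'' meaning precisely the combination of Theorem~\ref{thm:mid} (sufficiency) and Theorem~\ref{thm:notexp} (necessity) that you invoke. Your explicit verification that $\alpha_n=\sqrt{(\log n\log\log n)/n}$ satisfies Condition~\ref{cond:1} and your brief handling of the non-continuous case are useful clarifications the paper omits.
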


The proof of Theorem \ref{cor:iff} is straightforward using the results from the established theoretical results in earlier theorems. In Appendix \ref{A5}, we present a special case of that both $X$ and $Y$ are exponentially distributed but with different rate parameters. Then \eqref{twoq} does not hold.

\section{The inference}\label{sec:inference}
According to Theorem \ref{thm:mid}, given a (simulated) random sample of unit exponential random variable $\{X_i,i=1,\ldots,n\}$ and another sample $\{Y_i,1=1,\ldots,n\}$ of $Y$, if $q_{n,\Lambda_m}$ is only slightly bigger than 1, it is very likely that $\{Y_i,1=1,\ldots,n\}$ is generated by the unit exponential distribution ($Exp(1)$). Thus, an inference procedure for estimating the parameters of different models can be proposed. Suppose that the distribution of $Y$ is $P(Y\leq y)=F(y,\theta^*)$ which relies on the parameter vector $\theta^*$. Then, we define
\begin{equation}
	g\left(a,b\right)\coloneqq\max\left\{ a,b,\frac{1}{a},\frac{1}{b}\right\},\qquad a>0,\ b>0.
\end{equation}
To estimate $\theta^*$, we set the estimator to be
\begin{equation}
	\label{loss}
	\widehat{\theta}\triangleq\arg\min_{\theta}g\left(\max_{k\in\Lambda}\frac{X_{(k)}}{G^{-1}(F(Y_{(k)},\theta))},\max_{k\in\Lambda}\frac{G^{-1}(F(Y_{(k)},\theta))}{X_{(k)}}\right),
\end{equation}
where $G(x)=1-e^{-x}$ is the cumulative distribution function of $Exp(1)$. Here, $G^{-1}(F(Y_{(k)},\theta))$ is used to change the scale of $\{Y_i,1=1,\ldots,n\}$ to unit exponential. We have the following proposition.
\begin{prop}\label{estnse}
  $g(a,b)$ is minimized if and only if $a=b=1$.
\end{prop}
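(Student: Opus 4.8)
The plan is to reduce this bivariate statement to the elementary scalar inequality $\max\{x,1/x\}\ge 1$ for $x>0$, with equality exactly at $x=1$. First I would regroup the four terms in the definition to write $g(a,b)=\max\bigl\{\max\{a,1/a\},\ \max\{b,1/b\}\bigr\}$, which is immediate since $\max$ is associative and commutative.

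Next I would establish the scalar fact: for $x>0$, if $x\ge 1$ then $\max\{x,1/x\}=x\ge 1$, while if $x<1$ then $\max\{x,1/x\}=1/x>1$; in both cases $\max\{x,1/x\}\ge 1$, and equality forces $x=1$. Applying this with $x=a$ and with $x=b$ and combining through the outer maximum yields $g(a,b)\ge\max\{1,1\}=1$ for every admissible pair $(a,b)$.

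Finally I would verify that the bound is attained and pin down the equality case: $g(1,1)=\max\{1,1,1,1\}=1$, so the value $1$ is a genuine minimum rather than merely an infimum; and $g(a,b)=1$ forces both $\max\{a,1/a\}=1$ and $\max\{b,1/b\}=1$, hence $a=1$ and $b=1$ by the scalar fact. This gives the ``if and only if'' and shows the minimizer is the unique point $(1,1)$.

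There is essentially no difficult step here; the only point requiring a little care is the ``only if'' direction, where one must use that $\max\{x,1/x\}=1$ has $x=1$ as its \emph{unique} solution, not just that $1$ is the minimal attainable value.
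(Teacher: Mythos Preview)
Your proof is correct and is precisely the kind of straightforward argument the paper has in mind; the paper itself omits the proof, stating only that it ``is straightforward and is omitted.'' There is nothing to compare---your regrouping via $\max\{a,1/a\}\ge 1$ with equality iff $a=1$ is the natural route.
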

The proof is straightforward and is omitted.

 According to Theorem \ref{thm:notexp} and Theorem \ref{cor:iff},
%we have $G^{-1}(F(Y,\widehat{\theta}))\sim Exp(1)$ and thus, $F(Y,\widehat{\theta})$ is uniformly distributed. Then for $0<t<1$, we have
%$$t=P(F(Y,\widehat{\theta})\leq t)=P(Y\leq F^{-1}(t,\widehat{\theta}))=F(F^{-1}(t,\widehat{\theta}),\theta^*).$$
%Therefore, when there is a one-to-one correspondence between $\theta$ and $F(\cdot,\theta)$,
$\widehat{\theta}$ is a consistent estimator of $\theta^*$, which is stated in the following proposition.
\begin{prop}
\label{prop:consistent}
Suppose $\{X_{(i)},i=1,\ldots,n\}$ is an ordered sample from a unit exponential distribution, and $\{Y_{(i)},i=1,\ldots,n\}$  is an ordered sample from $P(Y\leq y)=F(y,\theta^*)$ which relies on the parameter vector $\theta^*$. Then the $\widehat{\theta}$ is a consistent estimator of $\theta^*$.
\end{prop}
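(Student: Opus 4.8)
The plan is a standard argmin‑consistency argument, but two points need care first. I would fix the index set in \eqref{loss} to a ``middle'' block $\Lambda=\Lambda_m$ obeying Condition \ref{cond:1}; this is necessary, since Theorem \ref{thm:left} shows that retaining the extreme left indices keeps the objective non‑degenerate even at $\theta=\theta^*$. Put $Z_i(\theta)=G^{-1}(F(Y_i,\theta))$. Because $y\mapsto G^{-1}(F(y,\theta))$ is continuous and strictly increasing, the $Z_{(k)}(\theta)$ in \eqref{loss} are exactly the order statistics of the i.i.d.\ sample $\{Z_i(\theta)\}_{i=1}^n$, whose common c.d.f.\ is $H_\theta(z)=F\!\big(F^{-1}(G(z),\theta),\theta^*\big)$. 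A change of variables shows $H_\theta\equiv G$ iff $F(\cdot,\theta)\equiv F(\cdot,\theta^*)$, which, under the standing identifiability assumption that $\theta\mapsto F(\cdot,\theta)$ is injective, holds iff $\theta=\theta^*$. Writing $\Gamma_n(\theta)=g\!\big(q^{(1)}_n(\theta),q^{(2)}_n(\theta)\big)$ for the objective, with $q^{(1)}_n(\theta)=\max_{k\in\Lambda_m}X_{(k)}/Z_{(k)}(\theta)$ and $q^{(2)}_n(\theta)=\max_{k\in\Lambda_m}Z_{(k)}(\theta)/X_{(k)}$, the definition of $g$ gives $\Gamma_n(\theta)\ge 1$ for every $\theta$ and every $n$.

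\textbf{Behaviour at the truth.} At $\theta=\theta^*$ we have $H_{\theta^*}=G$, so $\{X_{(k)}\}$ and $\{Z_{(k)}(\theta^*)\}$ are order statistics of two independent unit‑exponential samples. Theorem \ref{thm:mid} then gives $q^{(1)}_n(\theta^*)\overset{P}{\to}1$, and by symmetry of the two i.i.d.\ samples also $q^{(2)}_n(\theta^*)\overset{P}{\to}1$; continuity of $g$ at $(1,1)$ yields $\Gamma_n(\theta^*)\overset{P}{\to}g(1,1)=1$. Since $\widehat\theta$ is a minimizer, $1\le\Gamma_n(\widehat\theta)\le\Gamma_n(\theta^*)$, so $\Gamma_n(\widehat\theta)\overset{P}{\to}1$.

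\textbf{Separation and conclusion.} It remains to deduce $\widehat\theta\overset{P}{\to}\theta^*$ from $\Gamma_n(\widehat\theta)\overset{P}{\to}1$. For fixed $\theta\neq\theta^*$ we have $H_\theta\neq G$, so Theorem \ref{thm:notexp} forbids $q^{(1)}_n(\theta)$ and $q^{(2)}_n(\theta)$ from both tending to $1$, hence $\Gamma_n(\theta)\not\overset{P}{\to}1$. To make this uniform I would pass to a deterministic limit functional $\Gamma(\theta)$ of $H_\theta$, built from the quantile approximations $X_{(k)}\approx-\log(1-k/n)$ and $Z_{(k)}(\theta)\approx H_\theta^{-1}(k/n)$ that underlie the proof of Theorem \ref{thm:mid}, with $\Gamma(\theta)\ge 1$ and $\Gamma(\theta)=1$ iff $H_\theta=G$ iff $\theta=\theta^*$; then, assuming $\Theta$ compact and $\theta\mapsto F(\cdot,\theta)$ continuous, $\Gamma$ is continuous and $m(\delta)\coloneqq\inf_{\|\theta-\theta^*\|\ge\delta}\Gamma(\theta)>1$. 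Combined with the one‑sided uniform bound $\inf_{\|\theta-\theta^*\|\ge\delta}\Gamma_n(\theta)\ge m(\delta)-o_P(1)$, the event $\{\|\widehat\theta-\theta^*\|\ge\delta\}$ would force $\Gamma_n(\widehat\theta)\ge m(\delta)-o_P(1)$, contradicting $\Gamma_n(\widehat\theta)\overset{P}{\to}1$; letting $\delta\downarrow 0$ gives consistency.

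\textbf{Main obstacle.} The hard part is exactly this last, uniform, step: Theorems \ref{thm:mid}--\ref{thm:notexp} are pointwise in $\theta$, and upgrading Theorem \ref{thm:notexp} to a uniform‑in‑$\theta$ separation requires stochastic equicontinuity of the process $\theta\mapsto\Gamma_n(\theta)$, which is delicate because $q^{(i)}_n(\theta)$ is a maximum of ratios of order statistics, so small perturbations of $\theta$ near the two ends of $\Lambda_m$ can move the objective appreciably. I would control it by imposing mild regularity on the family — $\Theta$ compact, $\theta\mapsto F(y,\theta)$ continuous uniformly on compacts, and $\theta\mapsto F^{-1}(p,\theta)$ suitably equicontinuous — and by reusing the uniform‑in‑$k$ order‑statistic bounds from the proof of Theorem \ref{thm:mid} to obtain $\sup_{\theta\in\Theta}\big|\Gamma_n(\theta)-\Gamma(\theta)\big|\overset{P}{\to}0$, after which the conclusion follows from the standard argmin theorem.
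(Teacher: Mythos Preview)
Your approach is essentially the paper's: the paper does not give a detailed proof of this proposition at all, but merely asserts in the text preceding it that consistency ``follows from Theorem \ref{thm:notexp} and Theorem \ref{cor:iff}'' and leaves it at that. Your reduction to an argmin argument via $\Gamma_n(\theta^*)\overset{P}{\to}1$ (Theorem \ref{thm:mid}) and pointwise separation for $\theta\neq\theta^*$ (Theorem \ref{thm:notexp}) is exactly the mechanism those two citations encode, so on the level of ideas you and the paper agree.

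Where you go further --- and this is to your credit --- is in flagging that the paper's theorems are pointwise in $\theta$, so that passing from $\Gamma_n(\widehat\theta)\overset{P}{\to}1$ to $\widehat\theta\overset{P}{\to}\theta^*$ genuinely requires a uniform separation step (compact $\Theta$, continuity of $\theta\mapsto F(\cdot,\theta)$, stochastic equicontinuity of $\theta\mapsto\Gamma_n(\theta)$, and identifiability of the parametrization). The paper silently assumes all of this and does not address the obstacle you isolate; your diagnosis that this is the ``hard part'' is accurate, and the regularity conditions you list are the natural ones to impose. You are also right to insist on $\Lambda=\Lambda_m$ in \eqref{loss}, since Theorem \ref{thm:left} shows the left tail prevents $\Gamma_n(\theta^*)\to 1$ otherwise; the paper is not explicit about this choice in stating the proposition.
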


We note that the rank-preserving scale regeneration and conditional expectation test for testing independence in \cite{ZQM2011} does not involve parameter estimation. In \cite{ZQM2011}, the observed values are replaced by the simulated marginal specific values with the same rank corresponding to both observed and simulated. In this paper, we are concerned with the parameter estimation and the distribution-preserving properties with the estimated parameter values. Also as mentioned earlier, in \cite{zhang2017random}, parameters are estimated by assumed distribution using available estimation methods, e.g., the maximum likelihood, or other methods. In this paper, the estimation of parameter values is based on (\ref{loss}) directly. Compared with maximum likelihood estimation (MLE), the significant advantage of the new procedure is that it does not rely on the existence of explicit probability density function as long as the cumulative distribution function can be approximated arbitrarily accurately, and the validity of the regularity conditions that are required for the usual asymptotic properties associated with the maximum likelihood estimator. In \eqref{loss}, as long as $F(Y,\theta)$ can be calculated, the method is applicable. As a result, the new method is significantly different from those existing methods in the literature. Given that the estimation is based on a necessary and sufficient result in Theorem \ref{cor:iff}, it is natural to call our new estimation method as necessary and sufficient estimation (NSE).

For a given sample $\{Y_i,i=1,\ldots,n\}$, NSE can be performed many times, and then the estimation which makes \eqref{loss} smallest among all the estimations is used as the final estimation. As such, multiple sequences of simulated order statistics $\{X_{(i)}, i=1,\ldots,n,\}$ can be generated to compare with rescaled $\{Y_i,i=1,\ldots,n\}$, and the parameter values that perform rescaling $Y_i$s and make their relative distance smallest are chosen. Alternatively, each of multiple sequences of simulated order statistics $\{X_{(i)}, i=1,\ldots,n\}$ leads to a set of NSE estimated values for the parameter $\theta^*$, and then a 100(1-$\alpha$)\% confidence set for the parameter can be constructed, i.e., by taking lower and upper $\alpha/2$ percentiles of each estimated component parameter values. This idea is formalized in the following Theorem \ref{procedure} whose proof is deferred to Appendix.

\begin{theorem}
\label{procedure}
Given a sample $\{Y_i,i=1,\cdots,n\}$, we repeat the following procedure for $m$ times: For $j$-th time with $j=1,\dots,m$, we generate an ordered sample $\{X_{(i)}^j, i=1,\ldots,n\}$ from a unit exponential distribution, then calculate the estimate $\widehat{\theta}^{j}$ of the parameter $\theta^*$ by \eqref{loss}. Then for a sufficiently large $m$ or $m$ tending to infinity, a 100(1-$\alpha$)\% approximate confidence set for the parameter $\theta_i, i=1,\cdots,p$ can be constructed by
$$\left[\widehat{\theta}_{i,(\alpha m/2)},\widehat{\theta}_{i,((1-\alpha/2)m)}\right]$$
for $i=1,\cdots,p$.
Here, $\widehat{\theta}_{i,(k)}$ is the order statistics of $\{\widehat{\theta}^j_i, j=1,\cdots,m\}$.
\end{theorem}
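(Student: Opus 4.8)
The plan is to decouple the two asymptotic regimes in the statement: first send $m\to\infty$ with the observed sample $\{Y_i\}$ and the size $n$ held fixed, exploiting that the simulation replicates are exchangeable; then send $n\to\infty$, exploiting the consistency of $\widehat\theta$ from Proposition \ref{prop:consistent}.

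\emph{Reduction to an i.i.d.\ sequence.} Condition on $\{Y_i,i=1,\dots,n\}$. For each $j$, the estimate $\widehat\theta^{j}$ of \eqref{loss} is a fixed measurable function of $\{Y_i\}$ and of the ordered simulated sample $\{X^{j}_{(i)}\}$ (if the $\arg\min$ is not unique, fix a measurable selection once and for all). Because the $m$ runs are drawn independently from $Exp(1)$, the vectors $\widehat\theta^{1},\dots,\widehat\theta^{m}$ are conditionally i.i.d.; an additional independent run produces one further i.i.d.\ copy $\widehat\theta^{0}$ from the same conditional law. Hence, coordinate by coordinate, it suffices to study the i.i.d.\ real sequence $\widehat\theta^{0}_i,\widehat\theta^{1}_i,\dots,\widehat\theta^{m}_i$ with common c.d.f.\ $F_{n,i}$ (depending on $n$ and on the fixed data), and since the theorem's intervals are formed separately for each $i$ no union bound over coordinates is needed.

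\emph{Step 1: $m\to\infty$.} Interpret the non-integer ranks via $\lceil\alpha m/2\rceil$ and $\lceil(1-\alpha/2)m\rceil$. This is the classical distribution-free prediction-interval computation: when $F_{n,i}$ is continuous, ties are a.s.\ absent and, by exchangeability, the rank of $\widehat\theta^{0}_i$ among $\{\widehat\theta^{0}_i,\dots,\widehat\theta^{m}_i\}$ is uniform on $\{1,\dots,m+1\}$, whence, conditionally on $\{Y_i\}$,
$$
P\!\left(\widehat\theta_{i,(\lceil\alpha m/2\rceil)}\le\widehat\theta^{0}_i\le\widehat\theta_{i,(\lceil(1-\alpha/2)m\rceil)}\right)=\frac{\lceil(1-\alpha/2)m\rceil-\lceil\alpha m/2\rceil}{m+1}\ \longrightarrow\ 1-\alpha .
$$
Equivalently, Glivenko--Cantelli \citep{durrett2010probability} gives uniform a.s.\ convergence of the empirical c.d.f.\ of $\{\widehat\theta^{j}_i\}_{j\le m}$ to $F_{n,i}$, so the two order statistics converge a.s.\ to $F_{n,i}^{-1}(\alpha/2)$ and $F_{n,i}^{-1}(1-\alpha/2)$, and the limiting interval covers a fresh draw from $F_{n,i}$ with probability $1-\alpha$ (at least $1-\alpha$, with the usual one-sided adjustments, if $F_{n,i}$ has atoms).

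\emph{Step 2: $n\to\infty$, and the main obstacle.} Step 1 only certifies the interval as a $100(1-\alpha)\%$ \emph{prediction} interval for one more replicate $\widehat\theta^{0}_i$, whereas the theorem asserts a \emph{confidence} interval for the fixed target $\theta^{*}_i$. The link is Proposition \ref{prop:consistent}: $\widehat\theta_i\overset{P}{\to}\theta^{*}_i$ as $n\to\infty$, so $F_{n,i}$ concentrates at $\theta^{*}_i$, and in particular $F_{n,i}^{-1}(\alpha/2)\to\theta^{*}_i$ and $F_{n,i}^{-1}(1-\alpha/2)\to\theta^{*}_i$. Consequently, for $n$ and $m$ both large the prediction interval for $\widehat\theta^{0}_i$ and a confidence interval for $\theta^{*}_i$ differ by a vanishing amount, which is exactly what the word ``approximate'' in the statement encodes: $P\big(\theta^{*}_i\in[\widehat\theta_{i,(\lceil\alpha m/2\rceil)},\widehat\theta_{i,(\lceil(1-\alpha/2)m\rceil)}]\big)=1-\alpha+o(1)$. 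The hard part is turning this into a genuine finite-$n$ coverage bound: consistency pins down where the replicate law concentrates but not how fast the gap between ``covering $\widehat\theta^{0}_i$'' and ``covering $\theta^{*}_i$'' closes, and quantifying it would need more than Proposition \ref{prop:consistent} --- e.g.\ asymptotic normality of $\widehat\theta$, or a Berry--Esseen/concentration estimate for $F_{n,i}$. I would therefore either invoke whatever such refinement the earlier development supplies, or, as the theorem does, state the coverage only in the limiting ``approximate'' sense, Steps 1--2 being the routine part.
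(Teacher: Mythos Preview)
Your Step 1 is exactly what the paper's proof does, and in fact the paper's proof stops there: it observes that, conditional on $\{Y_i\}$, the replicates $\widehat\theta^{1},\dots,\widehat\theta^{m}$ are i.i.d., and then cites the law of large numbers to conclude that $\widehat\theta_{i,(\alpha m/2)}$ and $\widehat\theta_{i,((1-\alpha/2)m)}$ converge to the $\alpha/2$ and $1-\alpha/2$ quantiles of the (conditional) distribution of $\widehat\theta_i$. That is the entirety of the paper's argument.

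Your Step 2 goes beyond the paper. You correctly note that Step 1 by itself only delivers a \emph{prediction} interval for a fresh replicate $\widehat\theta^{0}_i$, not a \emph{confidence} interval for $\theta^*_i$, and you bridge this gap by invoking Proposition~\ref{prop:consistent} to argue $F_{n,i}$ concentrates at $\theta^*_i$ as $n\to\infty$. The paper's proof never makes this distinction and never invokes consistency; it simply identifies the interval endpoints with quantiles of $\widehat\theta_i$ and declares victory. So your proposal is not merely a different route --- it is more complete, and your ``main obstacle'' paragraph accurately diagnoses a gap the paper leaves open. Your honest caveat that a genuine finite-$n$ coverage statement would require rate information (asymptotic normality, Berry--Esseen, etc.) is well taken; neither you nor the paper supply this, which is why the theorem is phrased with ``approximate.''
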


\section{Simulation Study} \label{sec:simulation}
In this section, we present several examples to illustrate the efficiency of our NSE methods. In some examples, the existing estimation methods may not be applicable.
\subsection{Regressions with different error distributions}
\label{subsec:reg}
For the regular linear regression model:
\begin{equation}
	\label{linreg}
	y_i=\mu+x_i^T\beta+\sigma*\varepsilon_i,\ i=1,\ldots,n,
\end{equation}
where $\mu\in\R, x_i\in\R^p, \beta\in\R^p$ and $\varepsilon_i$'s are independent identically distributed generations of some distribution to be specified later, ordinary least squares (OLS hereafter) is the dominant method and proven to be optimal in the class of linear unbiased estimators. To apply the new NSE method, let
\begin{equation}
	\label{losslinreg}
	(\widehat{\mu},\widehat{\beta})=\arg\min_{(\mu,\beta)}g\left(\max_{k\in\Lambda}\frac{X_{(k)}}{G^{-1}(F(r_{(k)},\theta))},\max_{k\in\Lambda}\frac{G^{-1}(F(r_{(k)},\theta))}{X_{(k)}}\right)
\end{equation}
which is defined by \eqref{loss} with $\{Y_i,i=1,\ldots,n\}$ replaced by $\{r_i=y_i-\mu-x_i^T\beta,i=1,\ldots,n\}$. Note that the lower case $\{x_i,i=1,\ldots,n\}$ are observed covariate values throughout the paper, i.e., they are different from simulated sequence $\{X_{(i)},i=1,\ldots,n\}$.

The criterion used in OLS is the sum of the squares of the differences between the observed dependent variable in the given dataset and those predicted by the linear function. For the NSE method, the objective function for minimization is the difference between 1 and the maximum ranked quotients between a unit exponential sample and rescaled residuals of the model. According to the limit behaviors of maximum ranked quotients, the criterion adopted by the NSE method is actually the deviation of the empirical distribution of the residuals from the specified distribution ($F(\cdot,\theta)$) used in \eqref{loss}. Thus, the objective function is minimized only when $F(\cdot,\theta)$ is close enough to the empirical distribution of the residuals.

It is clear that if the distribution of $\varepsilon$ is known or an assumption is made about the $\varepsilon$, the new NSE method is applicable. In particular, there are chances that the linear model is incorrect for a real dataset, or some covariates are not observable. In this case, $\varepsilon$ is not independently normally distributed anymore and may suffer from exogeneity and autocorrelation. By applying the NSE method, we can seek an estimation that makes the residuals approximately normally distributed.

The NSE method leads to an NSE based test of normality for model checking. This new test calculates the loss function defined in \eqref{losslinreg} and makes decision by checking whether it falls into the Monte Carlo $95\%$ interval of the loss function with ${r_i,i=1,\cdots,n}$ being normally distributed. In addition to the NSE based test, we also apply Jarque-Bera Test \citep{jarque1987test} and Lilliefors Test \citep{lilliefors1967kolmogorov} to further evaluate the normality of the residuals.

In the simulation, we estimate the linear model \eqref{linreg} in several examples with different underlying true models. The estimated coefficients of the new NSE method are the mean value calculated from estimating the coefficients by using the NSE method multiple times in the same dataset. The influence of randomness in generating $\{X_i, i=1,2,\cdots,n\}$ used in \eqref{loss} can be decreased in this way.

\begin{exm}
	\label{stanlin}
	We consider the linear model (\ref{linreg}) with different distributions of $\{\varepsilon_{i}\}_{i=1}^{n}$ specified in the first columns in Tables \ref{tab:testols} and \ref{tab:testnew}.
	Since the model to be fitted is the standard linear model, the model used in estimation is only correctly specified when the distribution of $\{\varepsilon_{i}\}_{i=1}^{n}$ is $N(0,1)$.
\end{exm}
In the simulation, we let $p=5,\ \sigma=0.5,\ \mu=0$ and $\beta=(1,3,5,3,1)^T$. The sample size is set to be 300 and the replication number is 100. In each replication, $x_{ij},\ j=1,\cdots,5$ are generated independently identically from standard normal distribution $N(0,1)$. OLS and the new NSE method are applied to estimate the parameters $\beta$. The three tests for normality mentioned above are employed to test the normality of the residuals. We list the number of replications which pass these tests in Tables \ref{tab:testols} and \ref{tab:testnew} for each method. We can see that only when the model is correctly specified, the normality assumption of the residuals can be well preserved by OLS. While in other cases, the residuals generated by our NSE method are much closer to normal.

%	\begin{table}[]
%		\centering
%		\begin{tabular}{|c|c|c|c|c|c|c|}
%			\hline
%			& new test of OLS & new test of new estimate & lilliefors of OLS & lilliefors of new estimate & jarque bera of OLS & jarque bera of new estimate \\ \hline
%			N(0,1) & 100             & 100                      & 97                & 100                        & 96                 & 97                          \\ \hline
%			Exp(1) & 0               & 98                       & 0                 & 69                         & 0                  & 51                          \\ \hline
%			t(2)   & 0               & 60                       & 0                 & 53                         & 0                  & 26                          \\ \hline
%			t(4)   & 47              & 98                       & 6                 & 80                         & 1                  & 43                          \\ \hline
%			t(8)   & 91              & 100                      & 66                & 95                         & 29                 & 71                          \\ \hline
%		\end{tabular}
%		\caption{Linear model: the number of estimations which pass the test.}
%		\label{tab:teststanlin}
%	\end{table}
 \begin{table}[]
		\caption{Linear model: the number of OLS estimations passing the test out of 100 tests.}
	\label{tab:testols}
\begin{center}
	\begin{tabular}{|c|c|c|c|}
		\hline
		& NSE based test & Lilliefors test & Jarque-Bera test \\ \hline
		$N(0,1)$ & 100             & 97                & 96                 \\ \hline
		$Exp(1)$ & 0               & 0                 & 0                  \\ \hline
		$t(2)$   & 0               & 0                 & 0                  \\ \hline
		$t(4)$   & 47              & 6                 & 1                  \\ \hline
		$t(8)$   & 91              & 66                & 29                 \\ \hline
	\end{tabular}
\end{center}
\end{table}

\begin{table}[]
	\caption{Linear model: the number of estimations generated by the new NSE method and passing the test out of 100 tests.}
\label{tab:testnew}
\begin{center}
	\begin{tabular}{|c|c|c|c|}
		\hline
		& NSE based test & Lilliefors test & Jarque-Bera test \\ \hline
		$N(0,1)$ & 100                      & 100                        & 97                          \\ \hline
		$Exp(1)$ & 98                       & 69                         & 51                          \\ \hline
		$t(2)$   & 60                       & 53                         & 26                          \\ \hline
		$t(4)$   & 98                       & 80                         & 43                          \\ \hline
		$t(8)$   & 100                      & 95                         & 71                          \\ \hline
	\end{tabular}
\end{center}
\end{table}

\begin{figure}[]
	\begin{minipage}[]{0.49\linewidth}
		\centering
		\includegraphics[width=2.3in]{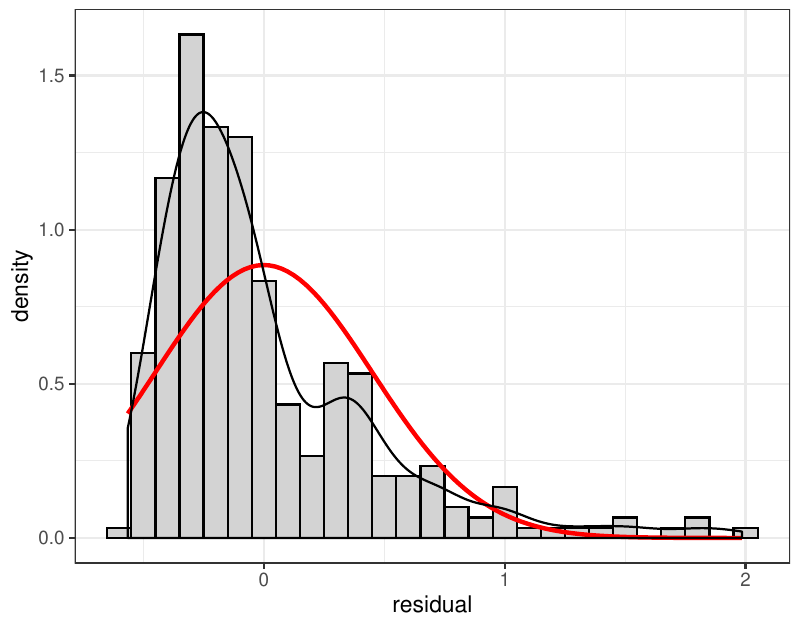}
	\end{minipage}
	\begin{minipage}[]{0.49\linewidth}
		\centering
		\includegraphics[width=2.3in]{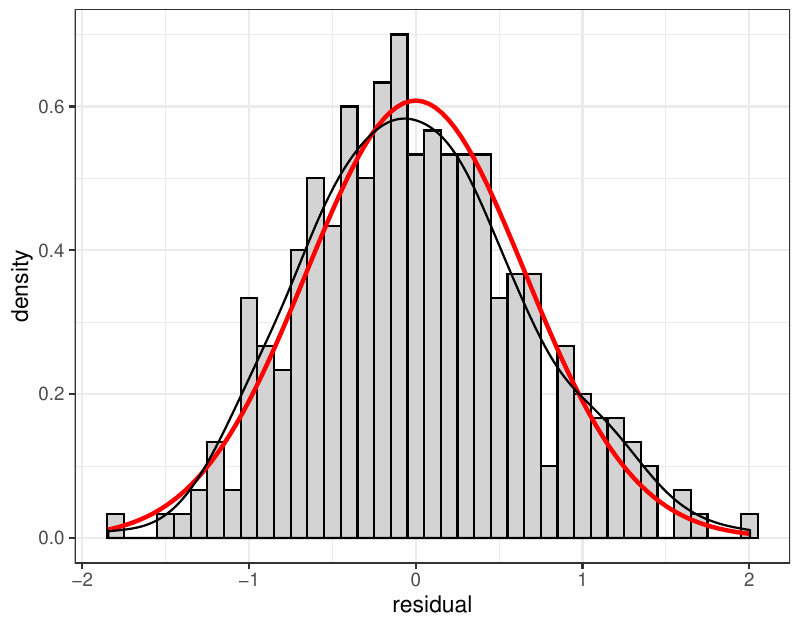}
	\end{minipage}
	\\
	\begin{minipage}[]{0.49\linewidth}
		\centering
		\includegraphics[width=2.3in]{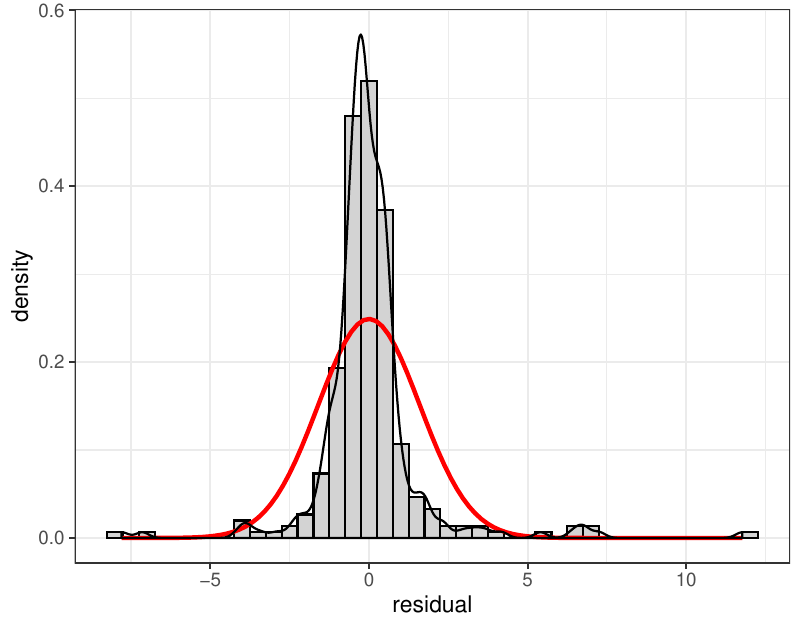}
	\end{minipage}
	\begin{minipage}[]{0.49\linewidth}
		\centering
		\includegraphics[width=2.3in]{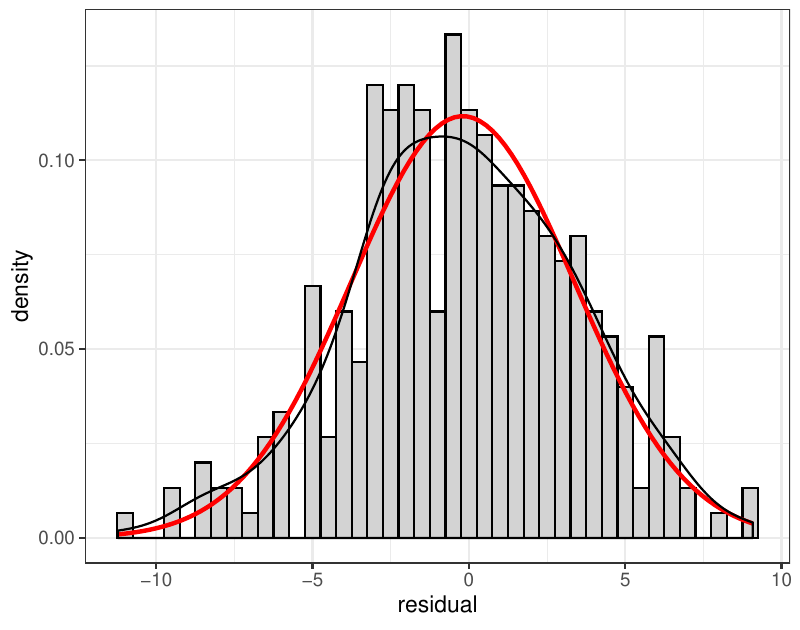}
	\end{minipage}
	\\
	\begin{minipage}[]{0.49\linewidth}
		\centering
		\includegraphics[width=2.3in]{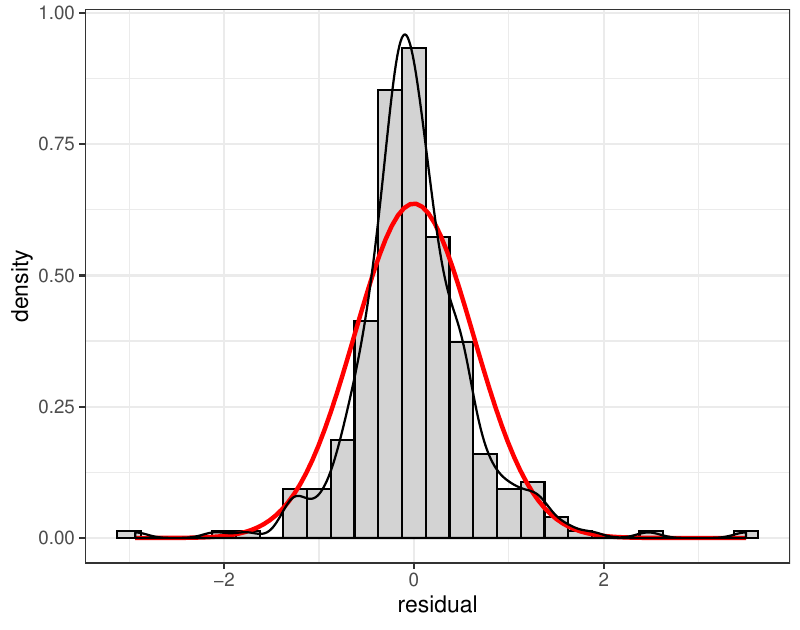}
	\end{minipage}
	\begin{minipage}[]{0.49\linewidth}
		\centering
		\includegraphics[width=2.3in]{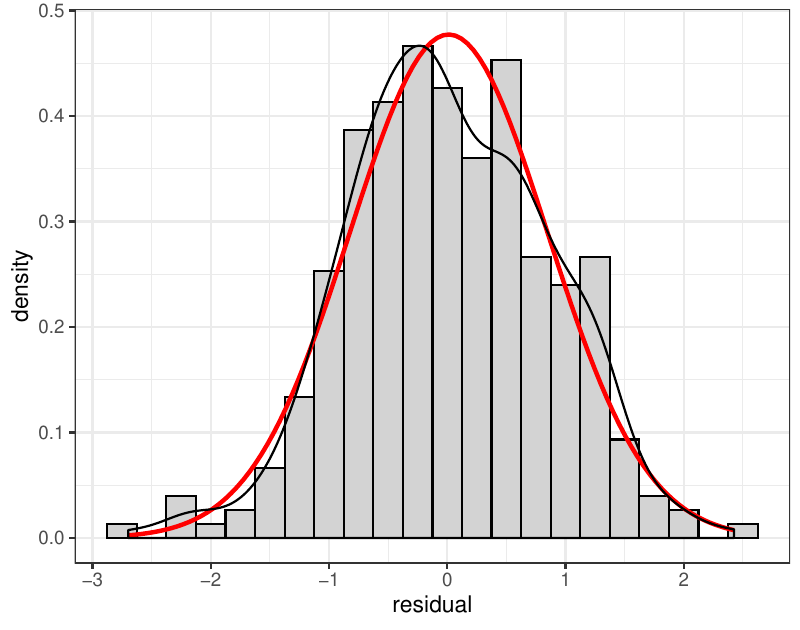}
	\end{minipage}
	\\
	\begin{minipage}[]{0.49\linewidth}
		\centering
		\includegraphics[width=2.3in]{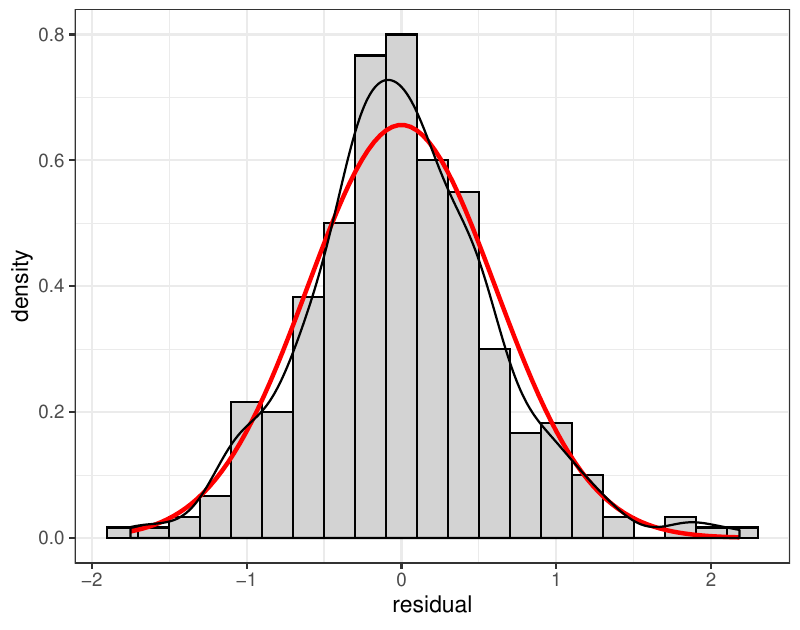}
	\end{minipage}
	\begin{minipage}[]{0.49\linewidth}
		\centering
		\includegraphics[width=2.3in]{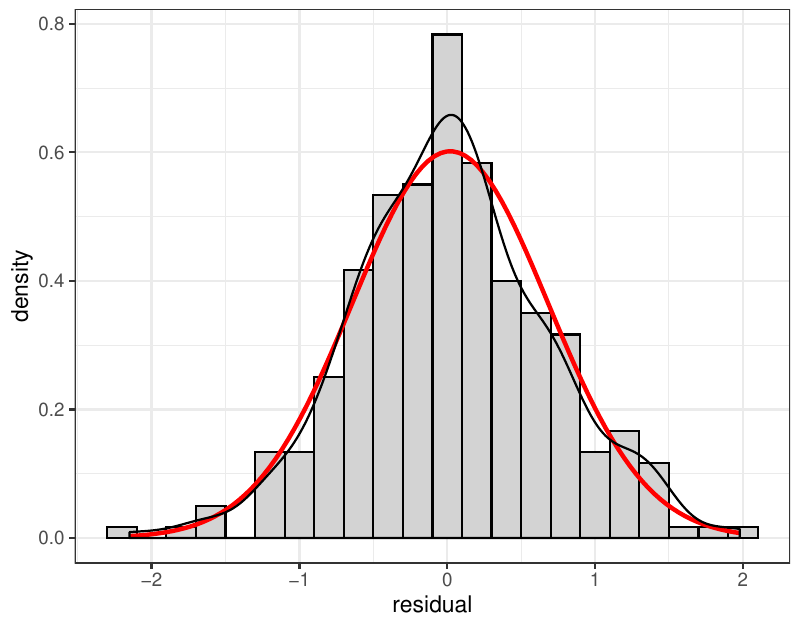}
	\end{minipage}
	\caption{Fitted residual histograms in linear models: Left panel is from OLS method; Right panel is from NSE method. From the top line to the bottom line, the residual distributions in the data generating processes are Exp(1), $t$(2), $t$(4), and $t$(8), respectively.}
	\label{fig:reg1hist}
\end{figure}

The histograms for the residuals generated by OLS and the new NSE method are demonstrated in Figure \ref{fig:reg1hist}. The black line indicates the empirical distribution of the residuals, while the red line represents the normal density curve with the same mean and variance as the residuals.

In Figure \ref{fig:reg1hist}, the residuals of OLS are shown in the left column, and the right column is for the NSE method. The first line is for errors being exponentially distributed, and the following three lines are for $t$ distribution with shape parameters (degrees of freedom $\nu$) being 2, 4, 8, respectively. We do not present the histograms of cases of truly normally distributed residuals here since, as Table \ref{tab:testols} suggests, the residuals of both methods can nearly always pass the three normality tests. Hence, their histograms would be quite similar and fit well with the normal density curve.

Our key observations from Figure \ref{fig:reg1hist} include that the fitness of the residuals of OLS is largely decided by the true distribution of $\{\varepsilon_{i}\}_{i=1}^{n}$. As we know, the t-distribution becomes closer to the normal distribution as $\nu$  increases. Hence, as $\nu$ gets larger, the empirical density curve of the histograms of the residuals gets closer to the normal density curve with the same mean and variance. Another observation is that our new NSE method always fits well with normal distribution no matter the support and how heavy the tail of the distribution is.

Additional regression simulation examples with quadratic terms and missing covariates are presented in Appendix Section \ref{sim:missing}. The missing covariates can be part of the non-parametric function $f_2(X_i)$ defined in (\ref{Eqn:semi}). Nonparametric estimation of $f_2(X_i)$ can be included in (\ref{losslinreg}). However, it is much beyond the scope of the paper and is left as a future project.
\subsection{Extreme Value Distributions and Beyond}
\label{sec:dist}
Regression is about to model the central tendency of the data, which is the most widely studied subject in statistics, machine learning, and data science. The study of extreme values and rare events is as important as central tendency and sometimes can be even more informative in many applied sciences. However, the modeling of extreme values and rare events can be challenging due to two main difficulties: 1) the available data can be sparse; 2) the existing parameter estimation can be inefficient due to that regularity conditions may not be satisfied.

This section presents two examples to illustrate how NSE can be adopted naturally to solve the estimation problems in an extreme value context. Then, in Appendix section, we present two advanced examples using NSE in block-maxima and peak-over-thresholds model fittings.
\subsubsection{Fitting the Generalized Extreme Value Distribution Directly}
\begin{exm}
	In this example, we consider estimating parameters in the generalized extreme value (GEV hereafter) distribution defined by:
	\begin{equation}
		\label{gev}
		F(y;\theta)=\begin{cases}
			\exp\left[-\left(1+\xi\frac{y-\mu}{\sigma}\right)^{-\frac{1}{\xi}}\right], & 1+\xi\frac{y-\mu}{\sigma}>0\text{ and }\xi\neq0, \\
			\exp\left(-e^{-(y-\mu)/\sigma}\right), & \xi=0. \\
		\end{cases}
	\end{equation}
\end{exm}
For the GEV case, the regularity conditions should first be explained. We refer the details of the regularity conditions in Section \ref{sim: extreme}.

In the simulation, we let $\mu=0, \sigma=1$ and the shape parameter $\xi=$-2, -1.5, -1, -0.5, 0, 0.5, 1, 1.5 respectively. We investigate how the estimate performance of $\mu, \sigma, \xi$ by MLE and the new NSE method changes as the shape parameter $\xi$ increases from $-2$ to $1.5$. The sample size $n$ is set to be 500 and the replication number is 100. In each replicate, we generate a GEV sample with specified parameters and sample size. Then estimate all the three parameters by both MLE and NSE method. The boxplot in Figure \ref{fig:gev} is used to display the results. From the figure, we can see that some boxes based on NSE are shorter than their counterpart boxes based on MLE. We know that when $\xi>-0.5$, MLEs attain the optimal asymptotic relative efficiency. The results can be explained here. Note that our new approach is a simulated sample-based indirect inference approach, i.e., the original data are augmented by simulated sequences. As a result, under the correct specified model, the information about the model and its parameters is increased, which results in smaller boxplots based on NSE.

As stated in Section \ref{sim: extreme}, when $\xi<-1$, MLE does not work well anymore, while the performance of the new NSE method is not affected by the change of $\xi$. Based on this observation, NSE estimation can be used as a benchmark estimation method for extreme value analysis.
\begin{figure}[]
	\begin{minipage}[]{0.49\linewidth}
		\centering
		\includegraphics[width=2.5in]{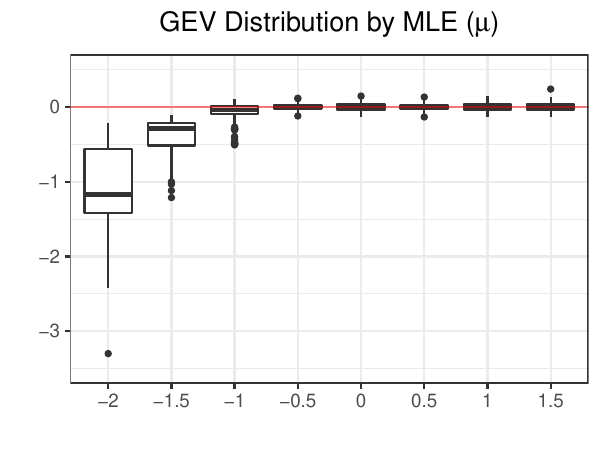}
	\end{minipage}
	\begin{minipage}[]{0.49\linewidth}
		\centering
		\includegraphics[width=2.5in]{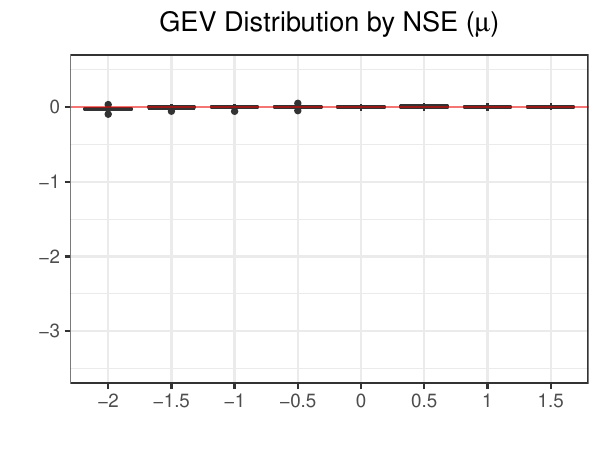}
	\end{minipage}
	\\
	\begin{minipage}[]{0.49\linewidth}
		\centering
		\includegraphics[width=2.5in]{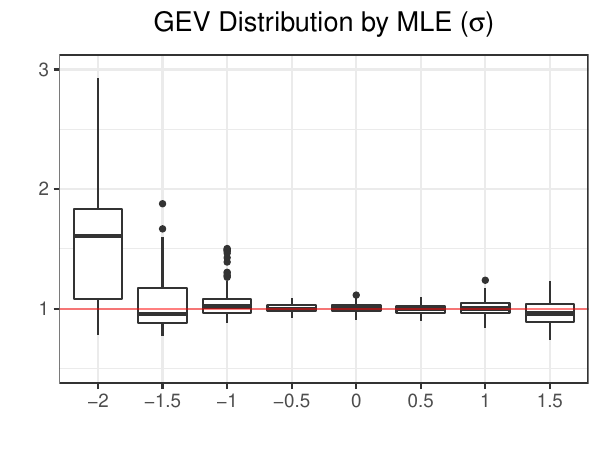}
	\end{minipage}
	\begin{minipage}[]{0.49\linewidth}
		\centering
		\includegraphics[width=2.5in]{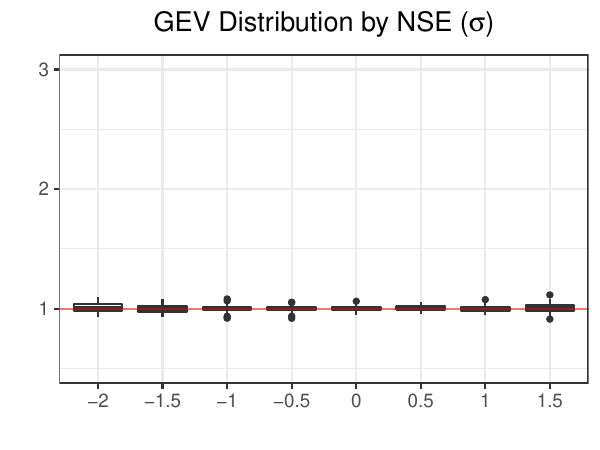}
	\end{minipage}
	\\
	\begin{minipage}[]{0.49\linewidth}
		\centering
		\includegraphics[width=2.5in]{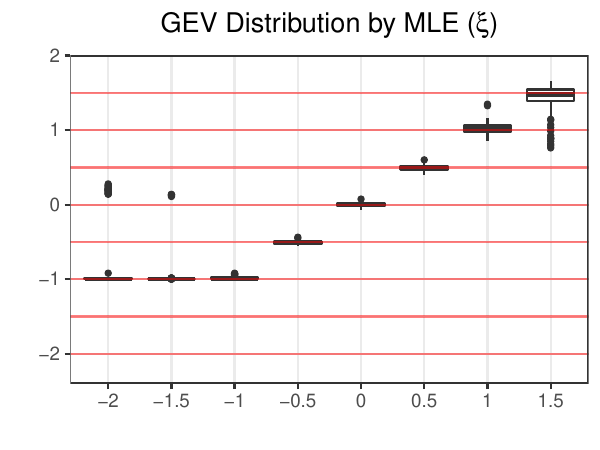}
	\end{minipage}
	\begin{minipage}[]{0.49\linewidth}
		\centering
		\includegraphics[width=2.5in]{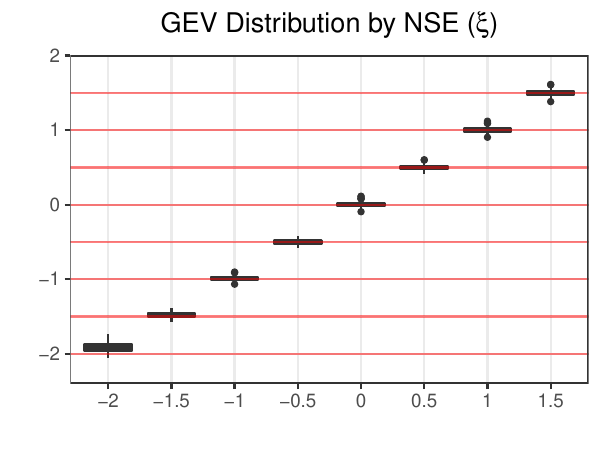}
	\end{minipage}
	\caption{GEV distribution. The left column is the estimation result obtained by MLE. The right column is the estimation result obtained by the new NSE method. The three lines are for $\mu, \sigma$ and $\xi$ respectively. True parameters are shown by colored lines. }
	\label{fig:gev}
\end{figure}

\subsubsection{Fitting positive $\alpha$-stable distribution}
\begin{exm}
	We consider a positive $\alpha$-stable random variable $Z_{\alpha}$ which is defined through its Laplace transform
	\begin{equation}
		\label{alphas}
		\EE[e^{-\lambda Z_{\alpha}}]=e^{-\lambda^{\alpha}},\ \lambda\geq0.
	\end{equation}
\end{exm}
Since the density of $Z_{\alpha}$ is not explicit except in the case $\alpha=1/2$ \citep{simon2014comparing}, MLE is not applicable to this example. For the new NSE estimation procedure, the inversion formula \citep{durrett2010probability} may be used to numerically calculate the distribution function and thus, obtain the estimates of parameters.

In the numerical study, the location parameter $\mu$ and scale parameter $\sigma$ are added, i.e., let $(Y-\mu)/\sigma$ follow the positive $\alpha$-stable distribution. Let $\mu=0, \sigma=1, \alpha=0.375, 0.5, 0.625, 0.75.$ The sample size $n$ is set to be $300$. The sample $\{Z_{\alpha1},\ldots,Z_{\alpha n}\}$ is generated according to the methods provided in \cite{simon2014comparing}. $F(Z_{\alpha},\theta)$ is calculated by the inversion formula. The replication number is set to 1000. In each replication, we generate the positive $\alpha$-stable sample with specified distribution parameters and sample size, and then estimate all the three parameters by the NSE method. We use the boxplot in Figure \ref{alphastable1} to display the estimation results obtained over a range of shape parameters.

\begin{figure}[]
	\begin{minipage}[]{0.32\linewidth}
		\centering
		\includegraphics[width=1.7in]{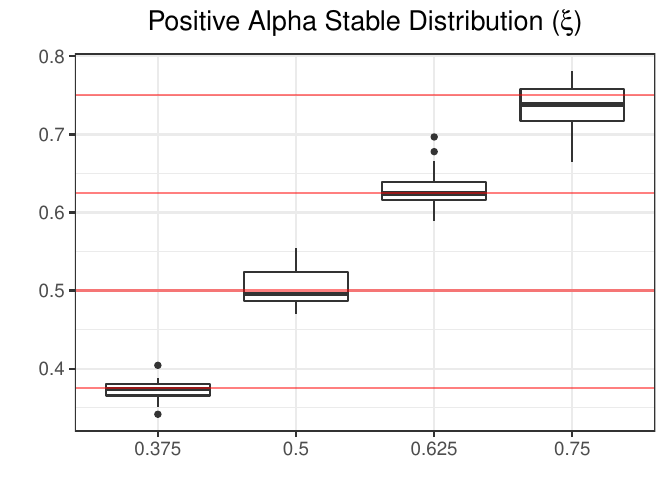}
	\end{minipage}
	\begin{minipage}[]{0.32\linewidth}
		\centering
		\includegraphics[width=1.7in]{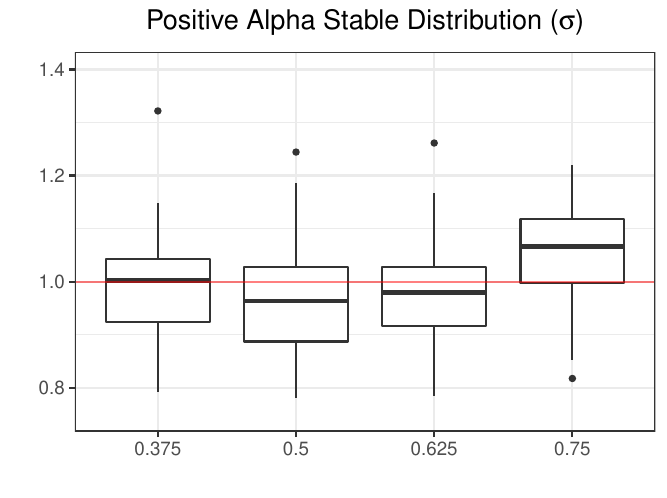}
	\end{minipage}
	\begin{minipage}[]{0.32\linewidth}
		\centering
		\includegraphics[width=1.7in]{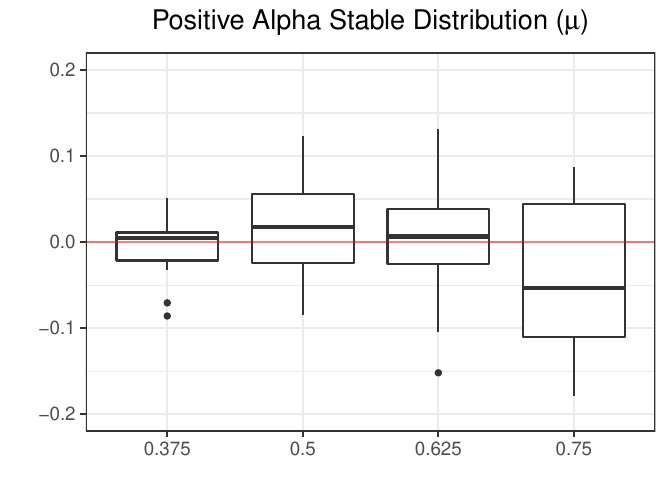}
	\end{minipage}
	\caption{The positive $\alpha$ stable distribution example: The left panel is the estimates of shape parameters $\xi$. The middle panel is the estimates of scale parameters $\sigma$. The right panel is the estimates of location parameter $\mu$. True parameters are shown by colored lines. }
	\label{alphastable1}
\end{figure}

%	\begin{figure}[]
%		\centering
%		\includegraphics[width=2.9in]{figures/alphamu}
%		\caption{The estimates of location parameter in the positive $\alpha$-stable distribution. True parameter is shown by colored lines. }
%		\label{alphastable2}
%	\end{figure}

One can see that the NSE estimation method performed excellently with the scale parameter and also performed reasonably well with the location parameter and the shape parameter.

\section{Empirical Study}\label{sec:realdata}
In this section, we present a real dataset in biological science study to illustrate the wide applicability of NSE.

Genome-wide protein–DNA interactions and transcription regulation is widely studied in biology. To pinpoint interaction sites down to the base-pair level, a computational method, Motif Discovery scan (MDscan), examines the ChIP-array-selected sequences and searches for DNA sequence motifs representing the protein-DNA interaction sites is introduced in \cite{liu2002algorithm}. To discover the influence of motifs on gene expression, we consider the following linear model:
\begin{equation}
	\label{mdscan}
	Y_g=\alpha+\sum_{m=1}^{M}\beta_mS_{mg}+\varepsilon_g,
\end{equation}
where $Y_g$ is the $\log_2$-expression value of gene $g$, $S_{mg}$ is defined to determine how well the upstream sequence of a gene $g$ matches a motif $m$ in terms of both degrees of matching and number of sites, and $\varepsilon_g$ is the gene-specific error term. The baseline expression $\alpha$ and the regression coefficient $\beta_m$ will be estimated from the data. A significantly positive or negative $\beta_m$ indicates that upstream sequences containing motif $m$ are correlated with enhanced or inhibited downstream gene expression, respectively \citep{conlon2003integrating, zczz18}.

There are 2155 motifs and 4443 genes in the dataset. Thus, variable screening is needed to be done first to reduce the number of covariates. Two screening procedures are considered here. One is iterative sure independence screening (ISIS hereafter) \citep{fan2008sure} and 36 motifs are selected. The other one is cumulatively-explained variability screening (CVS hereafter) \citep{zhang2018cvs} and 7 variables are selected. To obtain a model between the gene expression and motifs, OLS is first used, but the residuals strongly contradict the normality assumption for both sets of selected variables, which is indicated by both Lilliefors test and Jarque-Bera test. Furthermore, from the histogram of the response in Figure \ref{fig:motif}, it is obvious that the empirical distribution of the response is far away from normality. The large kurtosis (13.02) further implies the heavy tailness of the response. To alleviate the problem of the heavy tail and enhance the performance of the model, the response variable is taken a third root transformation, and the histogram of the transformed variable is displayed in Figure \ref{fig:motif}.

OLS is first used to fit the linear model between the transformed response and the selected variables, but only to find the residuals still reject the normality tests with slightly larger p-values than those obtained from untransformed responses. Then we apply the NSE method to estimate the linear model
\begin{equation*}
	Y^{1/3}_g=\alpha+\sum_{m=1}^{M}\beta_mS_{mg}+\varepsilon_g,
\end{equation*}
by using \eqref{losslinreg}, where $\varepsilon_g$ is assumed to be normal. In this way, the kurtosis (-1.45) is controlled and the interaction terms are introduced into the model without increasing the model complexity. From the histograms of residuals in Figure \ref{fig:motif}, it is apparent that the new NSE method greatly improves the normality of the residuals. For the variables selected by the two screening methods, the number of variables selected by ISIS is about 5 times that of CVS.
\begin{figure}[]
	\begin{minipage}[]{0.49\linewidth}
		\centering
		\includegraphics[width=2.5in]{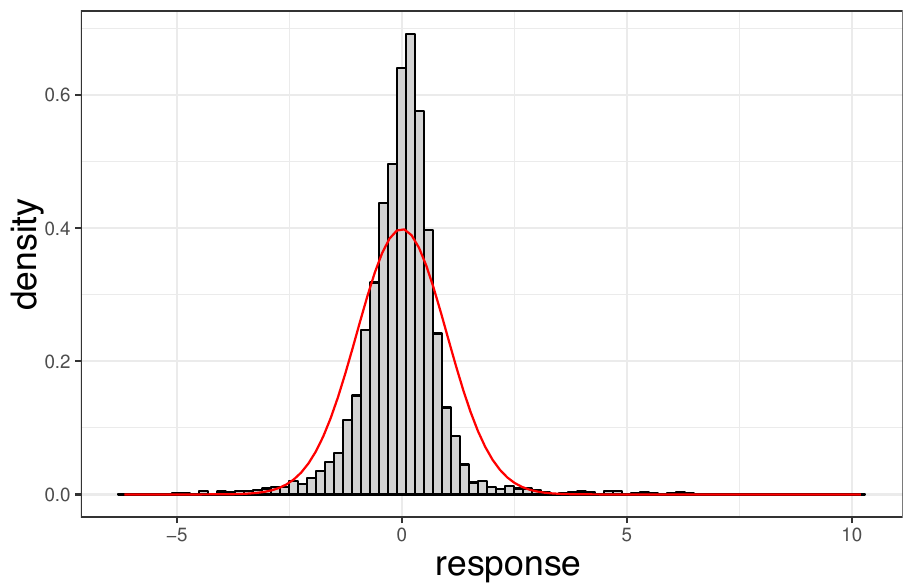}
	\end{minipage}
	\begin{minipage}[]{0.49\linewidth}
		\centering
		\includegraphics[width=2.5in]{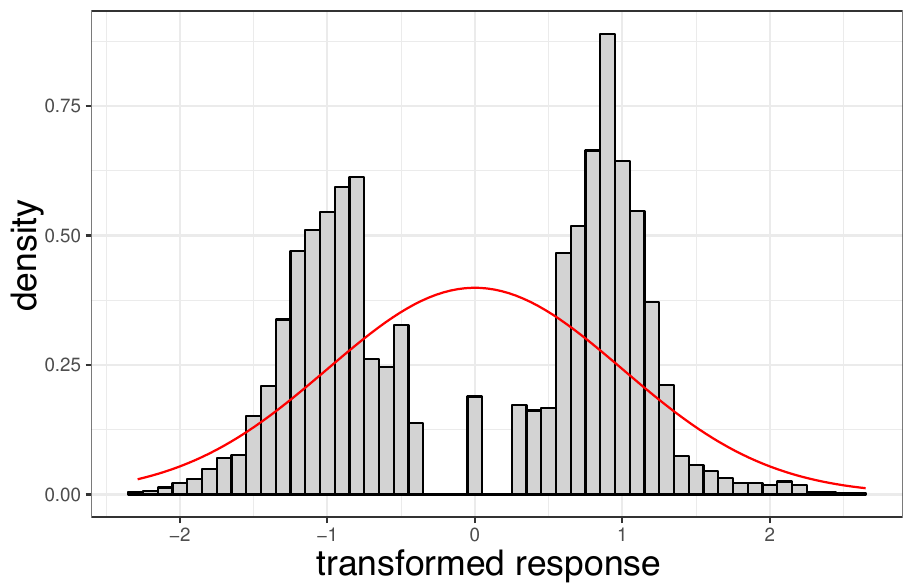}
	\end{minipage}
	\\
	\begin{minipage}[]{0.49\linewidth}
		\centering
		\includegraphics[width=2.5in]{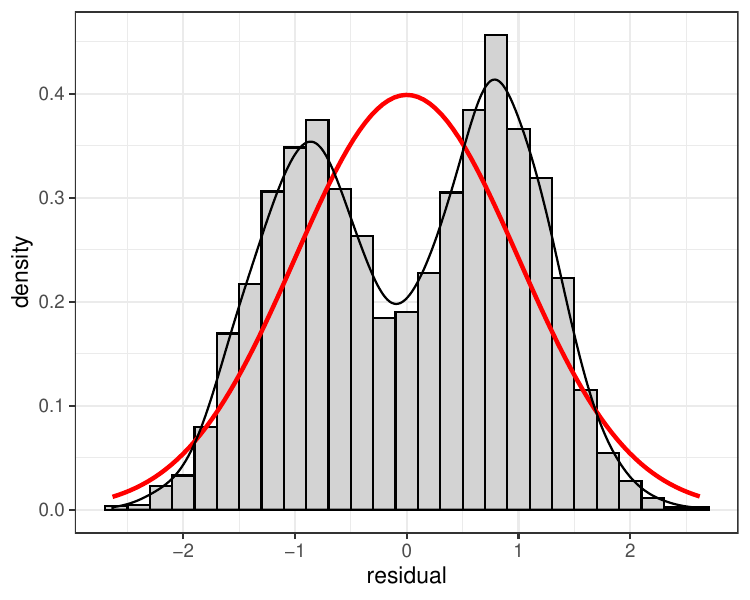}
	\end{minipage}
	\begin{minipage}[]{0.49\linewidth}
		\centering
		\includegraphics[width=2.5in]{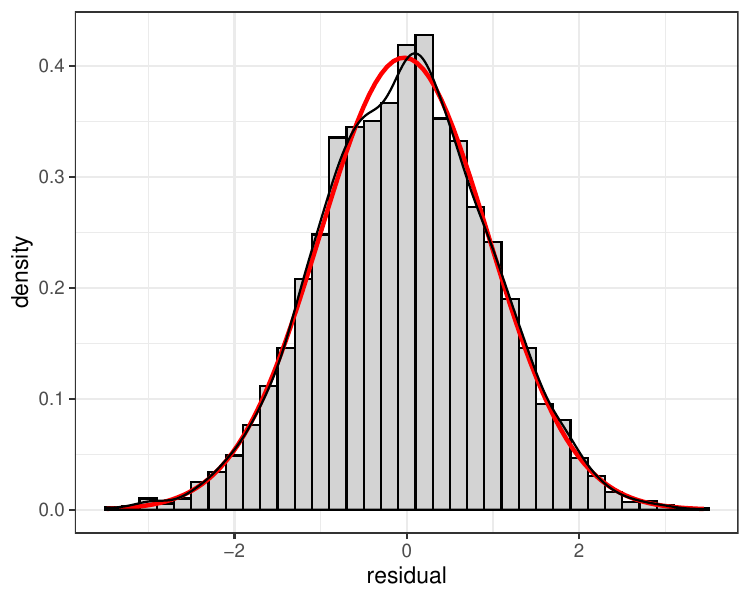}
	\end{minipage}
	\\
	\begin{minipage}[]{0.49\linewidth}
		\centering
		\includegraphics[width=2.5in]{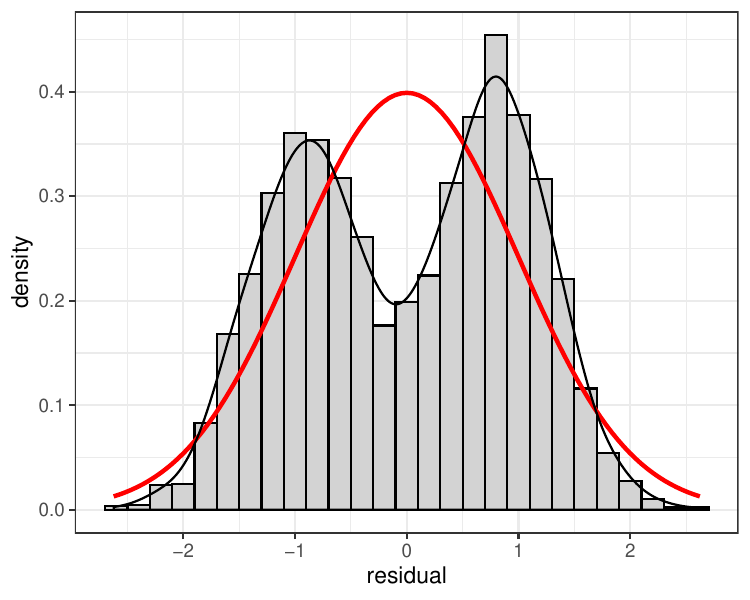}
	\end{minipage}
	\begin{minipage}[]{0.49\linewidth}
		\centering
		\includegraphics[width=2.5in]{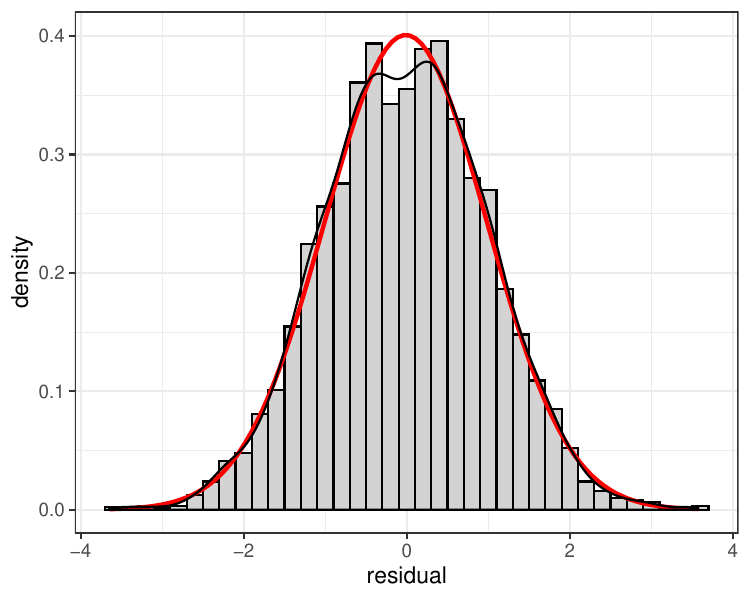}
	\end{minipage}
	\caption{Motif Data: In the first line, the left panel is the histogram of the response and the right plot is the histogram of the transformed response variable.
		In the second line, the left plot is the histogram of the residuals obtained by applying OLS on the transformed variable and the covariates selected by ISIS with p-values to be 0 for normality tests. The right plot is the histogram of the residuals obtained by applying the new NSE method on the transformed variable and the covariates selected by ISIS with p-values to be 0.66 for Lilliefors Test and 0.79 for Jarque-Bera Test.
		In the last line, the left panel is the histogram of the residuals obtained by applying OLS on the transformed variable and the covariates selected by CVS with p-values to be 0 for normality tests. The right panel is the histogram of the residuals obtained by applying the NSE method on the transformed variable and the covariates selected by CVS  with p-values to be 0.11 for Lilliefors Test and 0.07 for Jarque-Bera Test.}
	\label{fig:motif}
\end{figure}
This example dealt with high-dimension variable screening and selection, which is typical in high-dimension inference in the literature. As observed from this example, different screening methods lead to different final predictor sets, and different parameter estimations lead to residuals' normality test results with OLS/MLE having p-values close to 0 while NSE has p-values greater than 0.05. Further statistical inferences, e.g., multiple testing based on the OLS residuals, can be a problem. This data analysis further confirms that NSE can be more informative in statistical practice. 	

\section{Discussion}
\label{sec:conclusion}
In this paper, based on the concept of sample-based quotients proposed by \cite{zhang2008quotient} and developed in \cite{zhang2017random}, a new NSE family of nonlinear statistics is introduced to measure the similarity between two samples. The idea of relative error is also contained in the definition of MRQ, whose deviation from 1 is actually the largest relative error over the index set that MRQ is defined on. This definition is distinct from measures proposed in the existing literature to the best of our knowledge. Besides (\ref{Eqn: paridistance}), another possible statistic that may have similar function as MRQ is
\begin{equation}
	\label{ks}
	\max_{1\leq i\leq n}\abs{\widehat{F}(X_i)-F(X_i,\theta)}
\end{equation}
where $\{X_1,X_2,\ldots,X_n\}$ is a random sample which is assumed to be generated from the distribution $F$ with parameter $\theta$ and $\widehat{F}$ is the empirical distribution. According to Glivenko-Cantelli theorem \citep{durrett2010probability}, \eqref{ks} converges to 0 almost surely as $n\to\infty$. Thus, a possible estimation procedure is to minimize \eqref{ks} over $\theta$. This method can be graphically understood through the probability plot. This is another popular plot in practice. It is actually a probability versus probability plot, or a P-P plot. If the two CDFs are identical, the theoretical P-P plot will be the main diagonal, the 45-degree-line through the origin. The P-P plot is more sensitive to the differences in the middle part of the two distributions (the empirical distribution and the hypothesized distribution), whereas the Q-Q plot is more sensitive to the differences in the tails of the two distributions. A disadvantage of P-P plots is that they mainly discriminate in regions of high probability density. In these regions, the empirical and theoretical cumulative distributions change more rapidly than in regions of low probability density. This property may greatly deteriorate the performance of applying \eqref{ks} to estimation. Some simulation examples also support this observation. In addition, estimators derived from (\ref{Eqn: paridistance}), (\ref{ks}), and their equivalent forms may obey slower convergence rates in distribution, and much more advanced statistical theory and conditions may be required. Most importantly, these methods cannot guarantee the inverse distribution transformed random variates using estimated parameter values to obey the assumed distribution. For example, in the literature, Theorem 4 in \cite{zhang2017random} and Lemma 2 in its supplementary guarantee the inverse distribution transformation based on estimated parameter values to work. On the other hand, the new NSE estimation guarantees the procedure to work and is easy to implement and with faster convergence rates.

A core step in applying the new NSE methodology is to transform the sample to unit exponential scale as Theorem \ref{thm:mid} states. The NSE methodology allows us to fit the sample data to any pre-specified distribution family. The NSE methodology can explain why under some undesirable situations, such as missing data and nonlinear models, the NSE method can adjust the coefficients obtained by OLS to satisfy the normality assumption of residuals. If the normality tests reveal evidence of nonnormality, the standard remedy is to transform the response. Box and Cox introduced a popular family of transformations to approach several types of regression problems so that all the regression assumptions (means linear in the explanatory variables, homogeneous variances, and normal errors) would be satisfied. However, as illustrated in Section \ref{sec:realdata}, the residuals do not pass the normality tests even after transforming the response variable in biological data. Thus, the combination of the Box-Cox transformation and the NSE method is of practical benefit.

In the application and optimization, another noteworthy point is the choice of the initial value. Since the objective function \eqref{loss} is not convex, it is hard to obtain the optimal solution limited to the optimization algorithm. Usually, the optimization result is the local minima of \eqref{loss}. However, with the help of normality tests, we are able to select and present the typical result in the figures. In a future project, we will explore the method of POTS \citep{GSZ2018}  in searching global minimum points.

It should be noted that there are two gaps in the whole sequence that our derived theorems do not cover since the size of the index set is fixed in Theorem \ref{thm:left} and Theorem \ref{thm:right}. They only make up a small proportion which may approach 0 as the sample size increases. By observing Figure \ref{mqr}, the behaviors of these two parts are consistent with their neighboring parts. The theoretical property of these two parts may be our future research topic.

In summary, this paper presents a new NSE statistical inference approach: simulated distribution-based learning for statistical inferences. The method can be widely applicable in any problem concerned with the estimation of distribution parameters, especially the widely-used linear model. One potential application of NSE is in Bayesian analysis, in which various distribution assumptions are made. In real data, the linear model is usually not exactly the same as the true underlying model because of either the incorrectly specified function or the unobserved covariates. As we can see in Sections \ref{sec:simulation} and \ref{sec:realdata} and Appendix A, when the true model is not far away from the linear model, the NSE method absorbs part of the influence of the missing covariates and the incorrectly specified function into the linear part by preserving the normality assumption on the residuals, leading to competitive performance compared with MLE. Otherwise, when the true model deviates far away from the linear model, using the linear model with normality assumption on the residuals to depict the dataset will be inappropriate and may lead to an unreliable further inference. Therefore, the role of the distribution assumption is attached with proper importance in simulated distribution-based learning while ignored in the commonly-used MLE and other estimation methods.

{\bf Acknowledgments} The authors thank Bingyan Wang for her unselfish assistance and work in this project.
%% if your bibliography is in bibtex format, uncomment commands:
\bibliographystyle{apalike} % Style BST file (imsart-number.bst or imsart-nameyear.bst)
\bibliography{main}       % Bibliography file (usually '*.bib')

%% or include bibliography directly:

%%%%%%%%%%%%%%%%%%%%%%%%%%%%%%%%%%%%%%%%%%%%%%
%% Example with single Appendix:            %%
%%%%%%%%%%%%%%%%%%%%%%%%%%%%%%%%%%%%%%%%%%%%%%
\newpage
\begin{center}
  {{\huge\bf Supplementary file:}\\ {\LARGE\bf Necessary and Sufficient Estimation for Continuous\\ Distribution Preserved Inferences}}
  ~~\\
  ~~\\
  {\large Zhengjun Zhang$^{a,b,c}$, Xinyang Hu$^d$, Chuyang Lu$^e$, and Tianying Liu$^a$ }
\\
  ~~\\
  ~~\\

$^a$School of Economics and Management, University of Chinese Academy of Sciences\\
$^b$AMSS Center for Forecasting Science, Chinese Academy of Sciences\\
$^c$Department of Statistics, University of Wisconsin, Madison\\
$^d$Department of Statistics, Yale University\\
$^e$Academy of Mathematics and System Sciences, Chinese Academy of Sciences
\end{center}
~~\\
\begin{appendix}
\section{Appendix}
\subsection{Additional Simulation Studies}
\subsubsection{Regressions with nonlinear and missing predictors}\label{sim:missing}
We consider the incorrectly specified model.

\begin{exm}We consider the true model
	\begin{equation}
		\label{linreg1}
		y_i=x_i^T\beta_1+\mu+\beta_2(\sum_{j=1}^{q}z_{ij})^2+\varepsilon_i, i=1,\ldots,n,
	\end{equation}
	where $\mu\in\R, x_i\in\R^p, \beta_1\in\R^p, z_i\in\R^q, \beta_2\in\R^q$, and $\varepsilon_i$'s are independently identically generated from normal distribution $N(0,\sigma^2)$. Since the true model is a quadratic model, the linear model used in estimation is incorrectly specified. Furthermore, $x_i, i=1,\ldots,n$, are observable, while $z_i, i=1,\ldots,n$ are missing data.
\end{exm}
In the simulation, let $p=5,q=2$ and $\beta_1=(0.3,0.5,0.5,0.3,0.3)^T$ and $\beta_2=0.1$. The sample size is set to be 500 and the replication number is 500. In each replication, $\varepsilon_i$ are generated independently identically from normal distribution $N(0,0.3^2)$; $x_{i3},x_{i4}$ and $x_{i5}$ are generated independently identically from the standard exponential  distribution respectively; $(x_{i1},z_{i1})$ and $(x_{i2},z_{i2})$ are generated from standard bivariate normal distribution with correlation $\rho=0.6$ and $\rho=0.8$ respectively. OLS and the new method are applied to estimate the parameters in $\beta_1$. The boxplots for estimation results from all the replications are shown in Figure \ref{fig:2}.

\begin{figure}[]
	\begin{minipage}[]{0.4\linewidth}
		\centering
		\includegraphics[width=2.5in]{figures/reg7newq1.pdf}
	\end{minipage}
	\begin{minipage}[]{0.59\linewidth}
		\centering
		\includegraphics[width=2.88in]{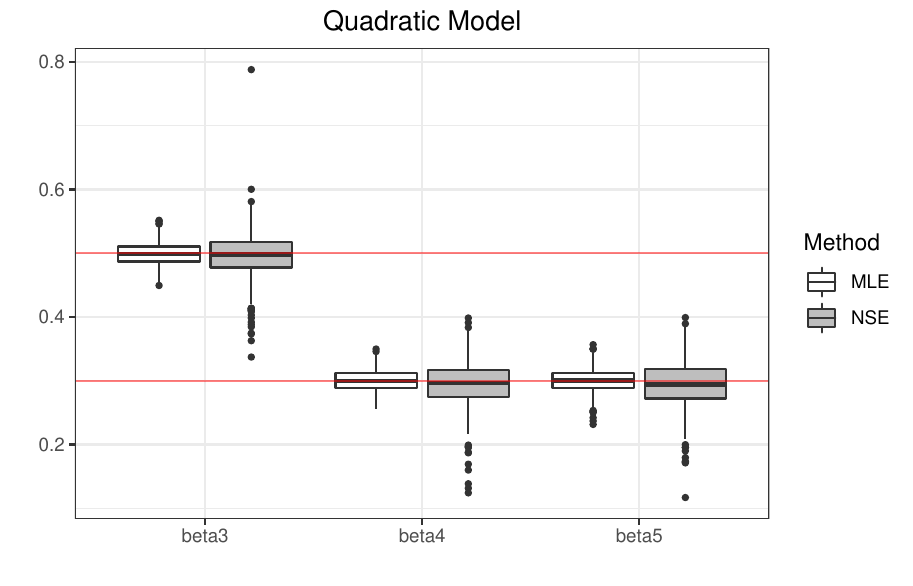}
	\end{minipage}
	\caption{Quadratic model (\ref{linreg1}): the left panel show the estimates of OLS and the right is for the new method. True parameters are shown by red lines. The parameter names corresponding to the box plots are marked on the horizontal axis.}
	\label{fig:2}
\end{figure}

\begin{figure}[]
	\begin{minipage}[]{0.49\linewidth}
		\centering
		\includegraphics[width=2.5in]{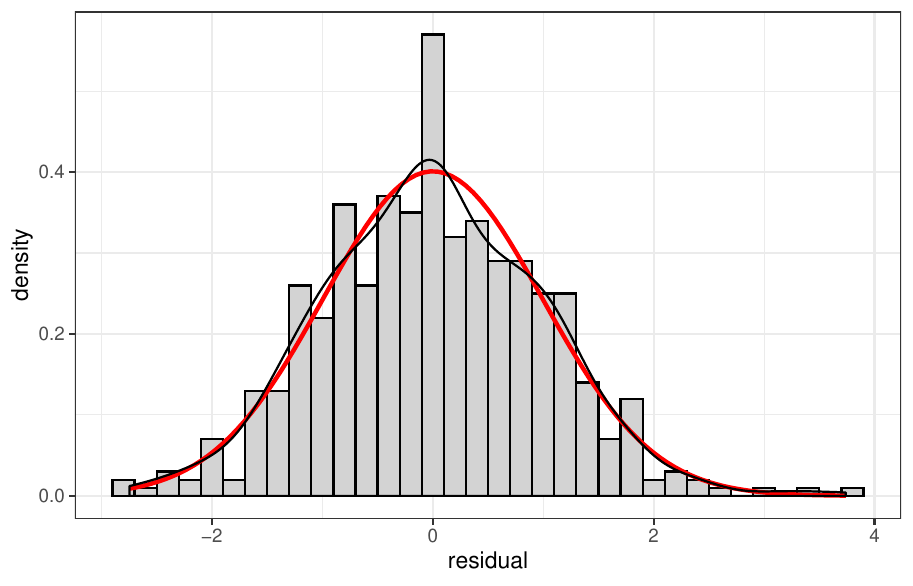}
	\end{minipage}
	\begin{minipage}[]{0.49\linewidth}
		\centering
		\includegraphics[width=2.5in]{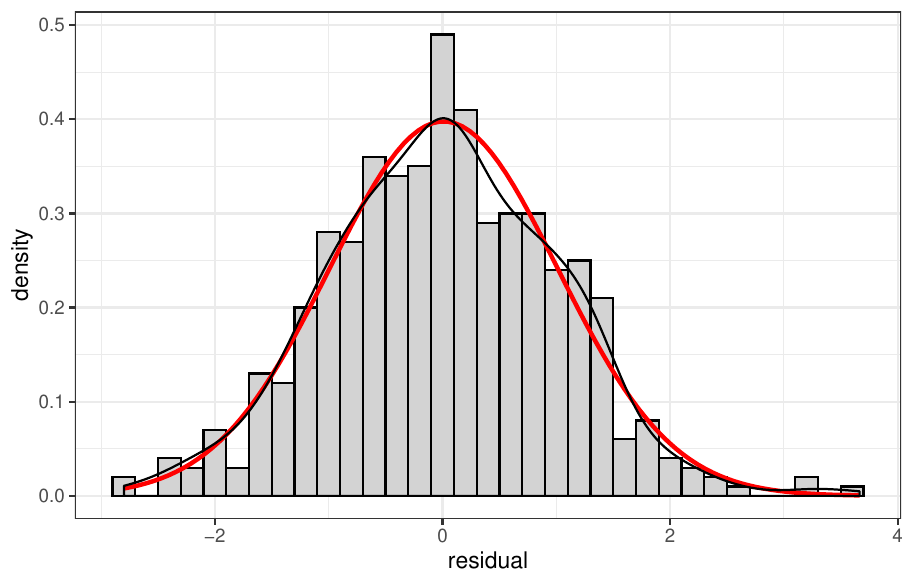}
	\end{minipage}
	\caption{Quadratic Model (\ref{linreg1}): The left panel is the histogram of the residuals generated by OLS with p-values to be 0.03 for Lilliefors Test and 0.20 for Jarque-Bera Test. The right panel is the histogram of the residuals generated by the new method with p-values to be 0.92 for Lilliefors Test and 0.37 for Jarque-Bera Test. The black line is the fitted density function. The red line is the density function of the normal distribution with the same mean and variance as the residuals. }
	\label{fig:quares}
\end{figure}

The histograms for the residuals generated by OLS and the new NSE method are demonstrated in Figure \ref{fig:quares}. The black line indicates the fitted density function, and the red line is the density function of the normal distribution with the same mean and variance as the residuals. The OLS residual normality hypothesis is rejected by Lilliefors Test with a p-value being 0.03, while the NSE residual normality hypothesis is not rejected by both tests. Figure \ref{fig:quares} directly shows the new method makes the empirical distribution of the residuals closer to the normal distribution.

The next example is about the missing covariates.

\begin{exm} We consider a linear model with missing covariates
	\begin{equation}
		\label{linreg2}
		y_i=x_i^T\beta_1+\mu-z_i\beta_2+\varepsilon_i, i=1,\ldots,n,
	\end{equation}
	where $\mu\in\R, x_i\in\R^p, \beta_1\in\R^p, z_i\in\R^q, \beta_2\in\R^q$ and $\varepsilon_i$'s are independently identically generated from normal distribution $N(0,\sigma^2)$. Furthermore, it is assumed that $x_i, i=1,\ldots,n$ are observable, while $z_i, i=1,\ldots,n$ are missing data.
\end{exm}
In the simulation, we let $p=5, \sigma=0.3$ and $q=1$. Let $\beta_1=(0.3,0.5,0.5,0.3,0.3)^T$ and $\beta_2=0.1$. The sample size is set to be 500 and the replication time is 500. Denote by $x_i=(x_{i1},\ldots,x_{i5})^T$. In each replication, $x_{i2},x_{i3}$, $x_{i4}$ and $x_{i5}$ are generated independently identically from standard exponential distribution $Exp(1)$ respectively. $(x_{i1},z_{i1})$ are generated from standard bivariate normal distribution with correlation $\rho_1=0.6$. OLS and the new method are applied to estimate the parameters $\beta_1$.

%In Figure \ref{fig:qua}, the estimates of coefficients are shown in the first two lines. In the last line, the left panel is of the absolute prediction error and the right panel is of relative prediction error. The left panels in the figure is of all the estimates, while in the right panel, only those estimated values in the replications passing normality test are kept.

In Figure \ref{fig:miss}, the estimates of OLS and the new method are quite similar, except that the box of the new method is larger. Due to the strong correlation between the observable and missing data, the estimation of the coefficients associated to $x_{i1}$ are away from the true value.

The histograms in Figure \ref{fig:missres} illustrate how the new method adjusts the coefficients to make the residuals satisfy the normality assumption.

\begin{figure}[]
	\begin{minipage}[]{0.4\linewidth}
		\centering
		\includegraphics[width=2.45in]{figures/reg8miss1.pdf}
	\end{minipage}
	\begin{minipage}[]{0.59\linewidth}
		\centering
		\includegraphics[width=2.99in]{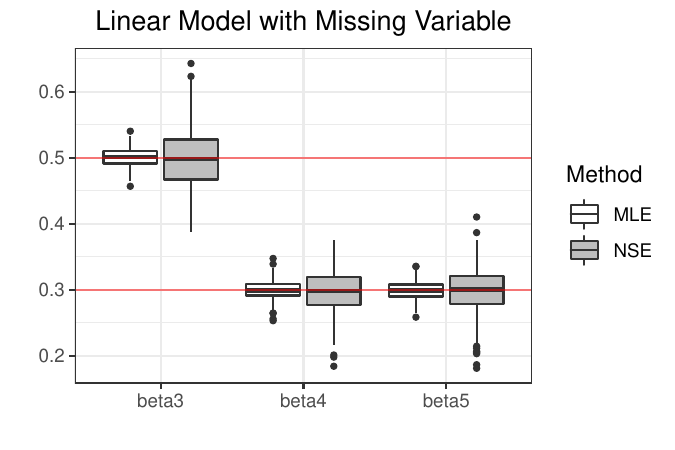}
	\end{minipage}
	\caption{Linear model (\ref{linreg2}) with missing variable: the left panel shows the estimates of two methods and the right is for the new method. Red lines indicate the true parameters. The parameter names corresponding to the box plots are marked on the horizontal axis.}
	\label{fig:miss}
\end{figure}

\begin{figure}[]
	\begin{minipage}[]{0.49\linewidth}
		\centering
		\includegraphics[width=2.5in]{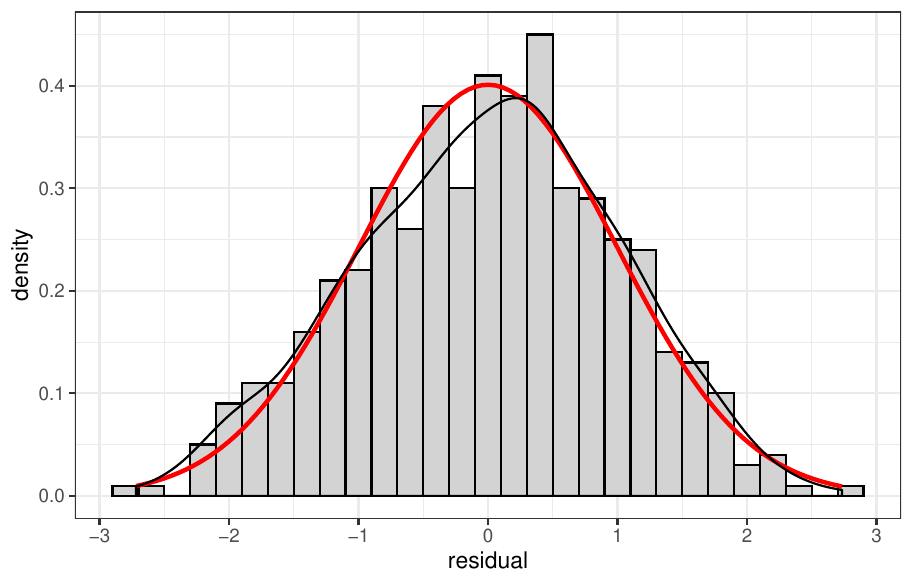}
	\end{minipage}
	\begin{minipage}[]{0.49\linewidth}
		\centering
		\includegraphics[width=2.5in]{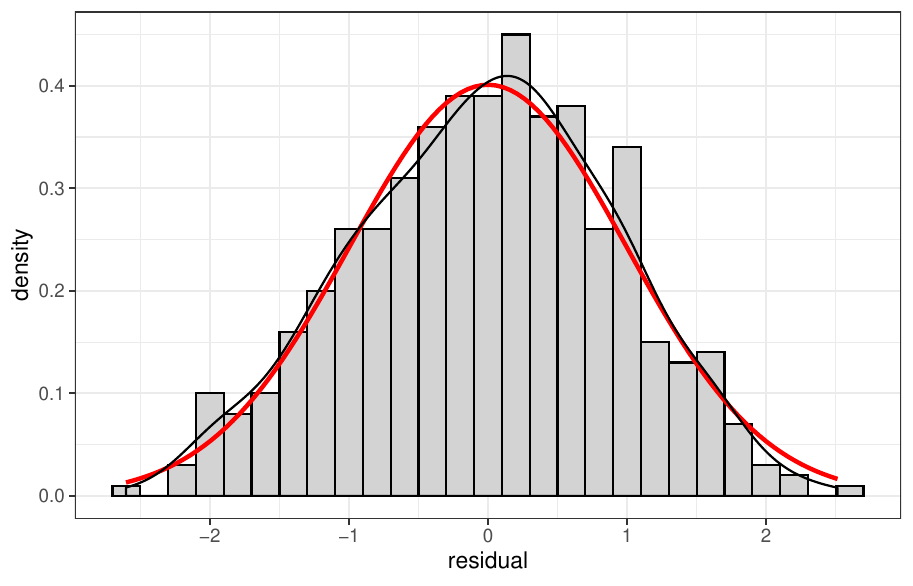}
	\end{minipage}
	\caption{Linear model (\ref{linreg2}) with missing variable: The left panel is the histogram of the residuals generated by using OLS with p-values to be 0.21 for Lilliefors Test and 0.11 for Jarque-Bera Test. The right panel is the histogram of the residuals generated by the new method  with p-values to be 0.48 for Lilliefors Test and 0.13 for Jarque-Bera Test. The black line is the fitted density function. The red line is the density function of the normal distribution with the same mean and variance as the residuals. }
	\label{fig:missres}
\end{figure}
\subsubsection{Extreme value distribution: advanced fitting}\label{sim: extreme}
We start to give a brief review of regularity conditions in extreme value inference. Let $\theta$ be a vector of $p$ parameters and $\theta^*$ be the unknown true value. Then the theory is said to be regular if, in an open neighborhood of $\theta^*$, the log-likelihood function is closely approximated, in probability, by a concave quadratic function whose maximum point converges in some efficient sense to the true parameter value as the sample size increases. Conditions ensuring the convergence are called regular conditions. One possible failure of these conditions involves model behavior at a boundary. It occurs when $\theta^*$ is an interior point of the feasible region, but the observations impose direct restrictions on the value that $\theta$ can take \citep{smith1985maximum}.

As an example, the three types of extreme value distribution must form a smooth family since they arise as limiting distributions for the normalized maxima of independent and identically distributed samples from a continuum of possible original distributions. This family can be parameterized by the GEV distribution. The cases $\xi<0$, $\xi\to0$, and $\xi>0$ give the Weibull, Gumbel, and Fr\'echet distributions respectively, e.g., see \citep{David2003Order}. Furthermore, it is notable that if the support of a probability density depends on an unknown parameter, then the classical regularity conditions for maximum likelihood estimation are not satisfied. In some cases, the maximum likelihood estimators exist and have the same asymptotic properties as in regular cases. In other cases, they may exist but not be asymptotically efficient or normally distributed, and in still other cases, they may not exist at all, at least not as solutions of the likelihood equations \citep{smith1985maximum}.

Obviously, the GEV model does not satisfy the regularity conditions because the end-points of the GEV distribution are functions of the parameter values: $\mu-\sigma/\xi$ is an upper end-point of the distribution when $\xi<0$, and a lower end-point when $\xi>0$. Thus, the violation of the usual regularity conditions means that the standard asymptotic likelihood results are not automatically applicable \citep{coles2001introduction}.

\cite{smith1985maximum} studied this problem in detail and obtained the results: when $\xi>-0.5$, maximum likelihood estimators are regular, in the sense of having the usual asymptotic properties; when $-1 < \xi < -0.5$, maximum likelihood estimators are generally obtainable, but do not have the standard asymptotic properties; when $\xi<-1$, maximum likelihood estimators are unlikely to be obtainable. On the other hand, the new NSE method is free of these constraints.

\subsubsection{Fitting GEV Using Block Maxima}
To model the extremes of a series of independent observations $X_1,X_2,\ldots$, data are blocked into sequences of observations of length $m$ for some large value of $m$, which generates a sequence of block maxima, $M_{n,1}, \ldots, M_{n,k}$, say, to which the GEV distribution can be fitted \citep{coles2001introduction}. This application blocks the data into blocks of equal length and fits the GEV to the set of block maxima. But in implementing this model for any particular dataset, the choice of block size can be critical. The choice amounts to a trade-off between bias and variance: blocks that are too small mean that approximation by the limit model is likely to be poor, leading to bias in estimation and extrapolation; large blocks generate few block maxima, leading to large estimation variance. Pragmatic considerations often lead to the adoption of blocks of length one year, in which case $m$ is the number of observations in a year and the block maxima are annual maxima.

In the simulation, we generate the sequence $\{X_1,X_2,\ldots,X_n\}$ from standard normal distribution $N(0,1)$, unit exponential distribution $Exp(1)$, unit Fr\'echet distribution, and standard uniform distribution $U(0,1)$, respectively. We then obtain the sequence of block maxima $\{M_{n,i}, i=1,2,\ldots,k\}$ by
$$M_{n,i}=\max_{(i-1)m<j\leq im}X_j.$$
Hence the $n=m\times k$ observations are divided into $k$ blocks of size $m$.
Let $m=100$ and $k=500$. The estimation procedure is performed 100 times by implementing MLE and the new NSE method. The performance is illustrated in Figure \ref{block}.
\begin{figure}[]
	\begin{minipage}[]{0.49\linewidth}
		\centering
		\includegraphics[width=2.5in]{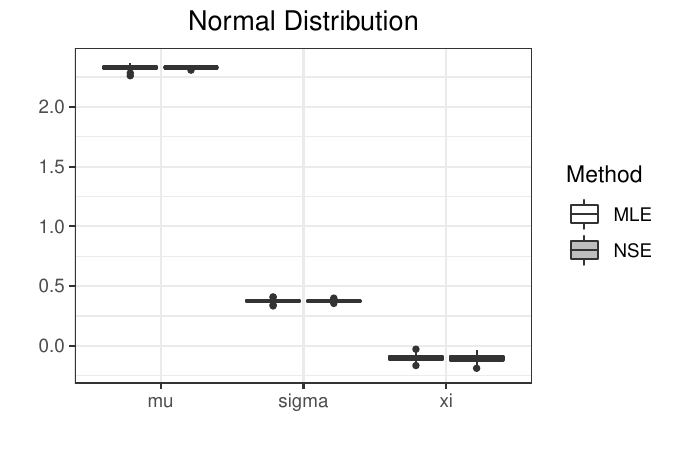}
	\end{minipage}
	\begin{minipage}[]{0.49\linewidth}
		\centering
		\includegraphics[width=2.5in]{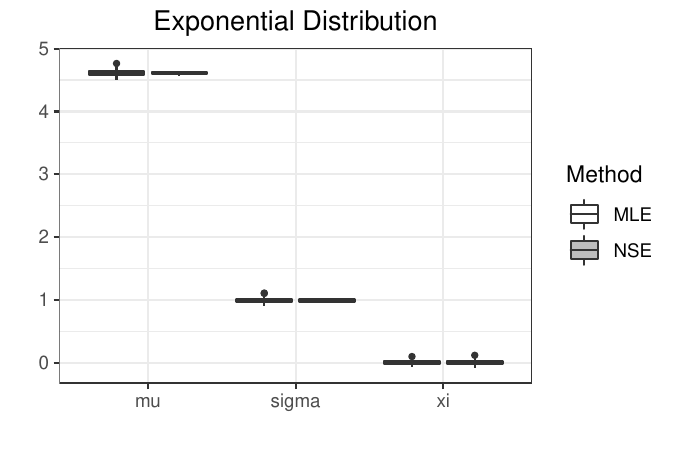}
	\end{minipage}
	\\
	\begin{minipage}[]{0.49\linewidth}
		\centering
		\includegraphics[width=2.5in]{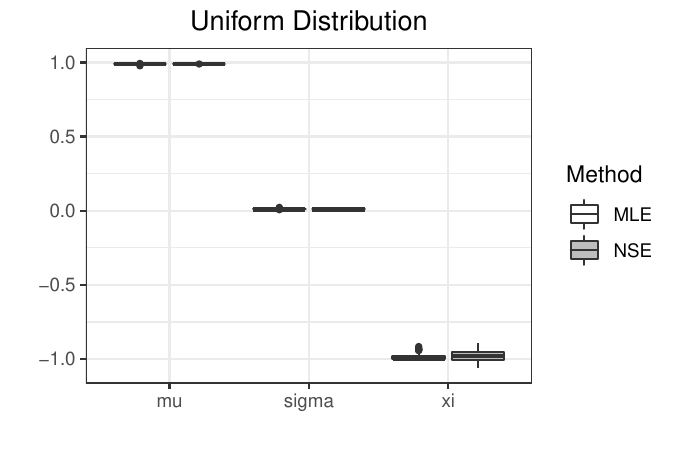}
	\end{minipage}
	\begin{minipage}[]{0.49\linewidth}
		\centering
		\includegraphics[width=2.5in]{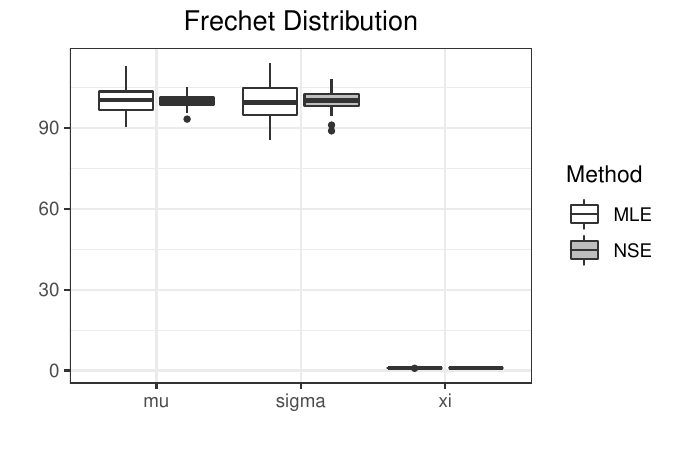}
	\end{minipage}
	\caption{Fitting GEV using block maxima. On the first line, the left plot is of normal distribution, and the right plot is of exponential distribution. On the second line, the left plot is of uniform distribution, and the right plot is of Fr\'echet distribution.}
	\label{block}
\end{figure}

From Figure \ref{block}, one can see that the new NSE method is robust and comparable to MLE.
\subsubsection{Peak Over Threshold Method}
Let $X_1,X_2,\ldots$ be a sequence of independent and identically distributed random variables, having marginal distribution function $F$. It is natural to regard extreme events as those of the $X_i$ that exceed some high threshold $u$. Denoting an arbitrary term in the $X_i$ sequence by $X$. Then the random variable $X-u$ represents the value of exceedances over threshold $u$, provided that this threshold has been exceeded. The distribution function of this random variable is called excess distribution function (denoted by $F_u$), and it follows that a description of the stochastic behavior of extreme events is given by the conditional probability
\begin{equation}
	\label{pot}
	F_u(x)=P(X-u\leq x|X>u)=\frac{F(x+u)-F(u)}{1-F(u)}.
\end{equation}
If the parent distribution $F$ were known, the distribution of threshold exceedances in \eqref{pot} would also be known. Since, in practical applications, this is not the case, approximations that are broadly applicable for high values of the threshold are sought. This parallels the use of the GEV as an approximation to the distribution of maxima of long sequences when the parent population is unknown.

To be specific, let $M_n=\max\{X_1,\ldots,X_n\}$. If for large $n$,
$$\PP\{M_n\leq y\}\approx G(y)$$
holds, where $G(y)$ is GEV distribution defined by \eqref{gev}, then for large enough $u$, the distribution function of $X-u$, conditional on $X>u$, is approximately
\begin{equation}
	\label{gpd}
	H(y)=1-\left(1+\frac{\xi y}{\widetilde{\sigma}}\right)^{-1/\xi}
\end{equation}
defined on $\{y:y>0\text{ and }(1+\xi y/\widetilde{\sigma})>0\}$ where $\widetilde{\sigma}=\sigma+\xi(u-\mu)$ \citep{coles2001introduction}. \eqref{gpd} is called the generalized Pareto distribution (GPD hereafter) family. GPD can be used for the modeling of tails of distributions, i.e., for data exceeding certain threshold (peaks over threshold) as the threshold tends to infinity (\cite{pickands1975statistical}; \cite{davison1984modelling}; \cite{smith1984threshold}). \cite{smith1985maximum} gives analogous results to those for the generalized extreme value distribution: for $\xi>-0.5$ the information matrix is finite and the classical asymptotic theory of maximum likelihood estimators is applicable, while for $\xi\leq-0.5$ the problem is non-regular and special procedures are needed.

It is worth noting that choosing a different, but still large, block size $m$ would affect the values of the GEV parameters, but not those of the corresponding generalized Pareto distribution of threshold excesses: $\xi$ is invariant to block size, while the calculation of $\widetilde{\sigma}$ is unperturbed by the changes in $\mu$ and $\sigma$ which are self-compensating.

In the experiment, set $n=10,000$ and the threshold is set to $95$th percentile of the sample. The original sequence $\{X_1,X_2,\ldots,X_n\}$ is generated from unit normal distribution $N(0,1)$, unit exponential distribution $Exp(1)$, standard Fr\'echet distribution, and standard uniform distribution $U(0,1)$ respectively, i.e., the same as those in block maxima method. The simulation is performed 100 times. Both MLE \citep{hosking1987parameter} and the new NSE method are implemented to estimate $\xi$ and $\widetilde{\sigma}$ in \eqref{gpd}. In Figure \ref{POT}, we display the estimates of $\xi$ and $\widetilde{\sigma}$ to compare the performance.
\begin{figure}[]
	\begin{minipage}[]{0.49\linewidth}
		\centering
		\includegraphics[width=2.6in]{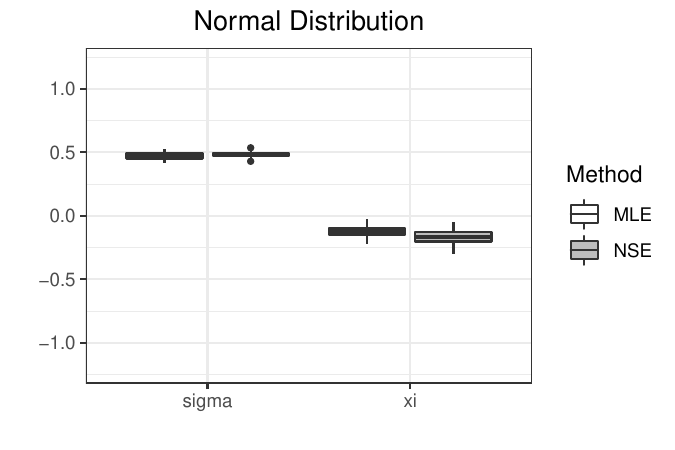}
	\end{minipage}
	\begin{minipage}[]{0.49\linewidth}
		\centering
		\includegraphics[width=2.6in]{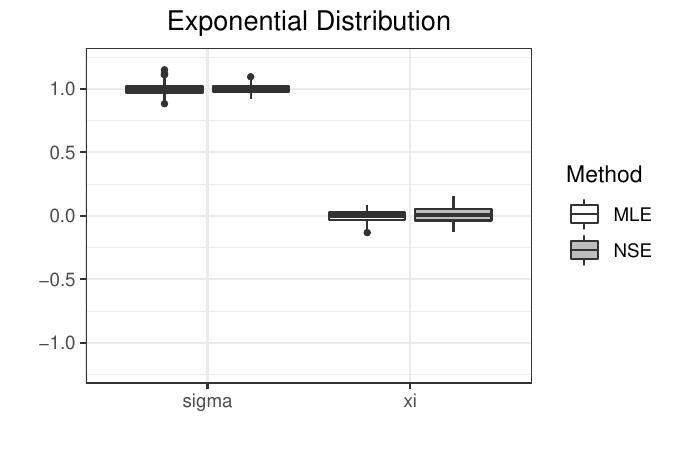}
	\end{minipage}
	\\
	\begin{minipage}[]{0.49\linewidth}
		\centering
		\includegraphics[width=2.6in]{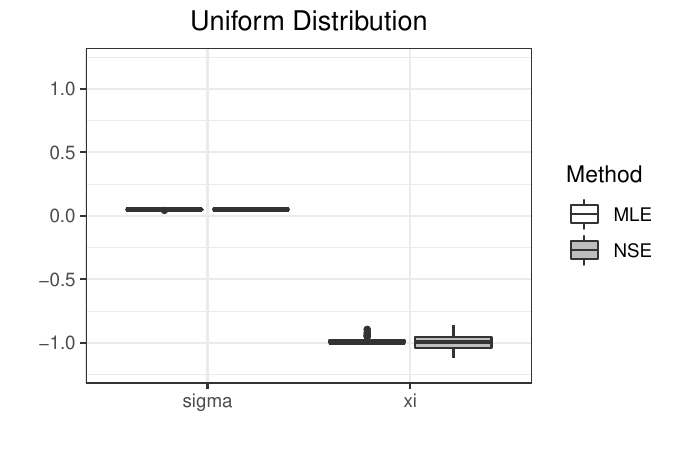}
	\end{minipage}
	\begin{minipage}[]{0.49\linewidth}
		\centering
		\includegraphics[width=2.6in]{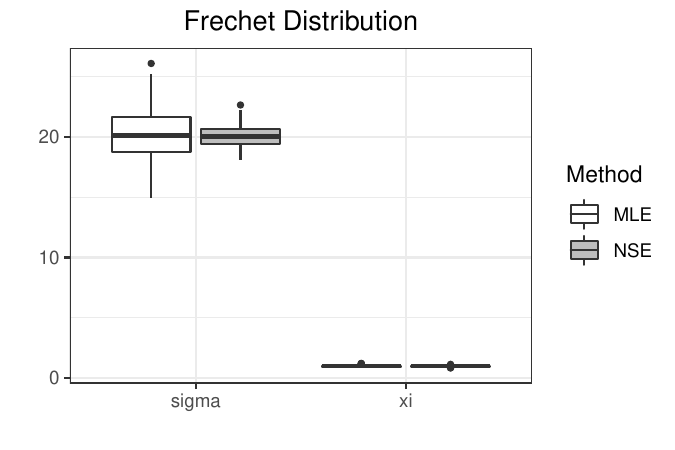}
	\end{minipage}
	\caption{Peaks Over Threshold. On the first line, the left plot is of normal distribution and the right plot is of exponential distribution. On the second line, the left plot is of uniform distribution and the right plot is of Fr\'echet distribution.}\label{POT}
\end{figure}

From Figures \ref{fig:gev}-\ref{POT}, one can conclude that the new NSE method performs reasonably well or better compared with MLE.

\subsection{Empirical Study: Concrete Compressive Strength}
\label{sec:concrete}
In this section, we study the concrete compressive strength as a function of seven ingredients and age based on the dataset from UCI Machine Learning Repository \citep{yeh1998modeling}. Apart from the component types, the properties of concrete are influenced by the mixing proportions. Although technical references consist of experimental data describing thousands of different mixes, no one has yet made a composite of this information. Moreover, a mix is almost never described with all of the important details indicated; thus, a strength prediction from the available data is a highly uncertain task. In this approach, the compressive strength of concrete is a function of the eight input features.

We use the concrete compressive strength as the response variable and the other eight features as variables to estimate the linear regression model \eqref{linreg}. Before applying these methods, all the variables are normalized first since the ranges differ a lot. The sample size is 1030, and the replication number in the new NSE method is set to be 500. OLS is applied first, and it shows that the intercept, the coefficients before Coarse Aggregate, and Fine Aggregate are insignificant. In principle, when all the values of the covariates are zero, it is easy to argue that the concrete compressive strength is zero. Thus, it is reasonable to fit the model through the origin, i.e., let the intercept be zero. In this case, OLS shows that all the coefficients are significant. Then, we use the NSE method to fit the linear model without origin.

The histograms in Figure \ref{fig:concres} give a direct demonstration of how the new NSE method adjusts the coefficients to make the residuals satisfy the normality assumption. Obviously, the density function corresponding to the NSE method is closer to the normal density than that of OLS. Although OLS estimated parameter values lead to significance in normality tests, the overall residual normality tests failed with p-values to be 0.004 for Lilliefors Test and 0.04 for Jarque-Bera Test. In contrast, NSE resulted residuals have normality test p-values to be 0.91 for Lilliefors Test and 0.64 for Jarque-Bera Test.

\begin{figure}[H]
	\begin{minipage}[]{0.49\linewidth}
		\centering
		\includegraphics[width=2.5in]{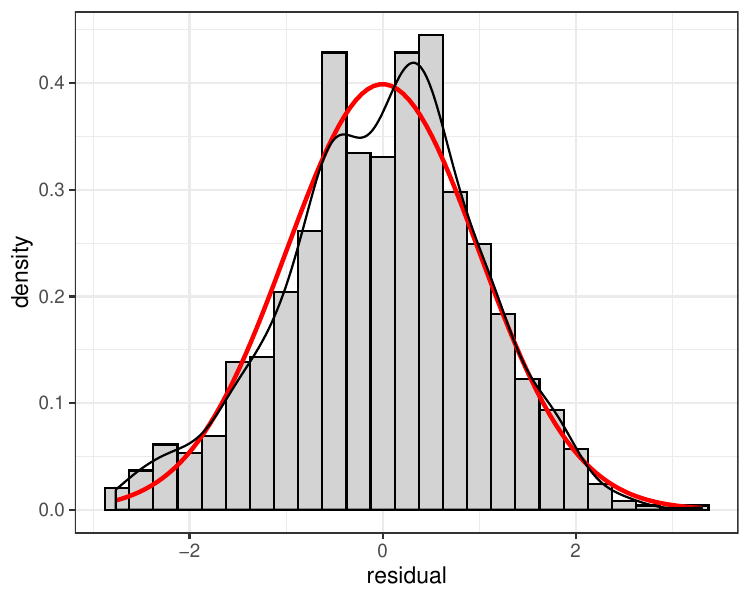}
	\end{minipage}
	\begin{minipage}[]{0.49\linewidth}
		\centering
		\includegraphics[width=2.5in]{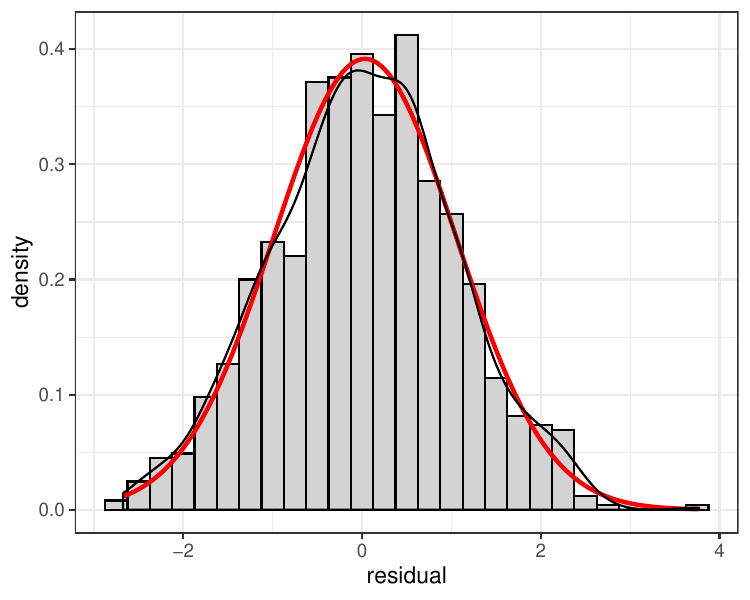}
	\end{minipage}
	\caption{Concrete Data: The left panel is the histogram of the residuals generated by OLS with p-values to be 0.004 for Lilliefors Test and 0.04 for Jarque-Bera Test. The right panel is the histogram of the residuals generated by the new NSE method with p-values to be 0.91 for Lilliefors Test and 0.64 for Jarque-Bera Test. The black line is the fitted density function. The red line is the density function of the normal distribution with the same mean and variance as the residuals. }
	\label{fig:concres}
\end{figure}

\begin{figure}[]
	\begin{minipage}[]{0.49\linewidth}
		\centering
		\includegraphics[width=2.5in]{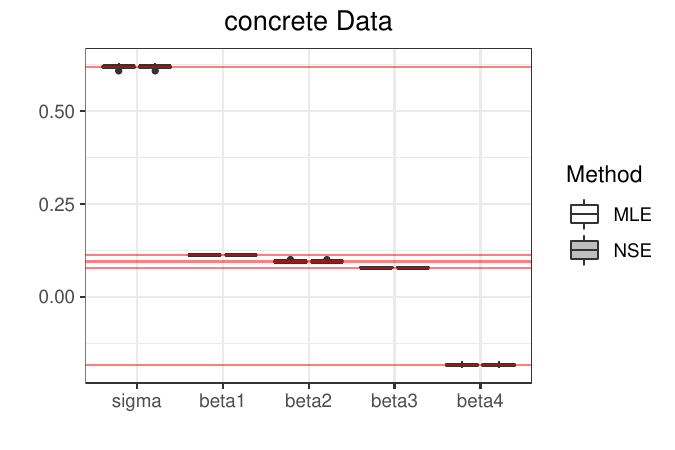}
	\end{minipage}
	\begin{minipage}[]{0.49\linewidth}
		\centering
		\includegraphics[width=2.5in]{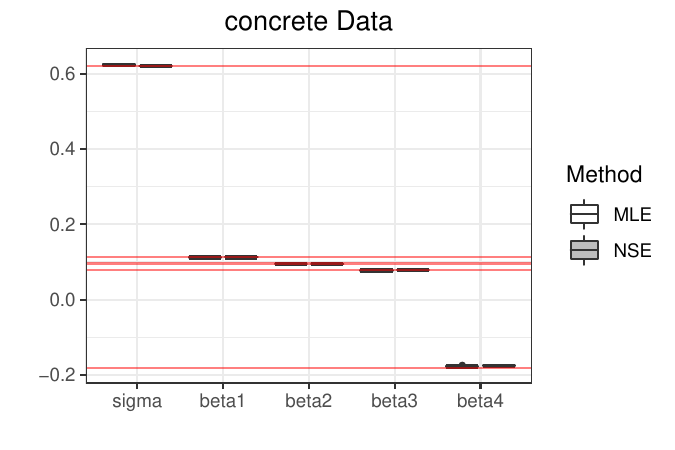}
	\end{minipage}
	\\
	\begin{minipage}[]{0.49\linewidth}
		\centering
		\includegraphics[width=2.5in]{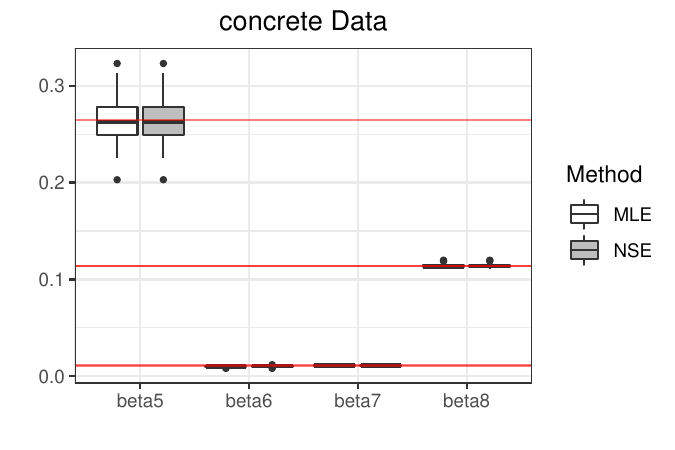}
	\end{minipage}
	\begin{minipage}[]{0.49\linewidth}
		\centering
		\includegraphics[width=2.5in]{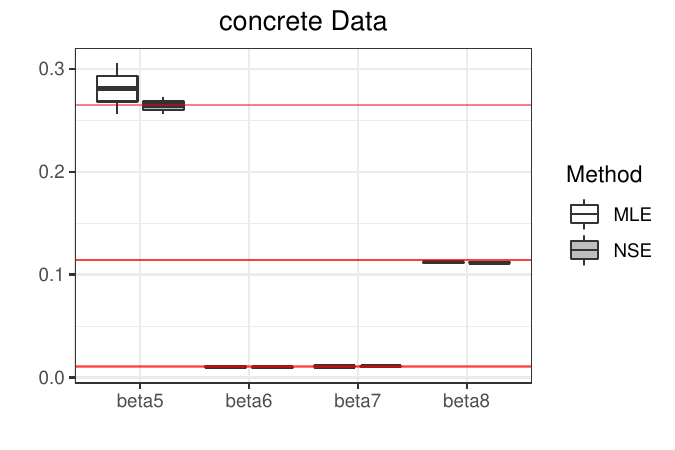}
	\end{minipage}
	\caption{Concrete Data: the left panels show the estimates of two methods before applying normality tests. The right panels show the estimates of the replications which pass the additional normality tests. Red lines indicate the estimates of MLE on the whole dataset for comparison. These are the estimates of coefficients in the model. The parameter names corresponding to the box plots are marked on the horizontal axis.}
	\label{fig:3}
\end{figure}

Recall that MLE is based on minimizing squared absolute errors, while NSE is based on minimizing relative errors. After models are fitted, their inferences can be very different. For this example, the costs of different materials can be different, and the ratios of product quality to cost can be different. Examples and applications like this one in diversified areas are numerous. It is safe to say that NSE is certainly an alternative approach for regression analysis. In some scenarios, e.g., preserving distribution assumption, it can give better estimations and lead to better decision-making.

\subsection{Proof of Proposition \ref{prop1}}
In this section, we prove the behavior of $q_n^{(1)}$ and $q_n^{(2)}$ at $1$ as displayed in the left panel of Figure \ref{mqr}. Denote
\begin{equation}
	F(t,n)=\PP(q_{n}^{(1)}\leq t).
\end{equation}
\begin{lemma}
	\label{lemma}
	If $X_1,\ldots,X_n$ are i.i.d samples from a distribution with probability density function $f(x)$, then the joint density function of $(X_{(i)}:X_{(1)},\ldots,X_{(n)})$ is
	$$ f(x_1,\ldots,x_n)=\begin{cases}
		n! \prod_{i=1}^{n} f(x_n) & \Rm{if} \ x_1<\ldots<x_n, \\
		0 & \Rm{otherwise}.
	\end{cases}$$
\end{lemma}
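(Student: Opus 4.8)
The plan is to push the joint law of $(X_1,\dots,X_n)$ forward through the sorting map and exploit the symmetry of the product density. First I would observe that, since $f$ is a density, the event that two coordinates coincide has probability zero: $\PP(X_i=X_j)=0$ for $i\neq j$, so almost surely the $X_i$ are pairwise distinct. On the complement of the ties, the vector $(X_1,\dots,X_n)$ lies in exactly one of the $n!$ open simplices
$$
R_\pi=\{(y_1,\dots,y_n)\in\R^n : y_{\pi(1)}<y_{\pi(2)}<\cdots<y_{\pi(n)}\},\qquad \pi\in S_n,
$$
which together with the ties set partition $\R^n$. The sorting map $T(y_1,\dots,y_n)=(y_{(1)},\dots,y_{(n)})$ restricted to $R_\pi$ coincides with the linear coordinate permutation $(y_1,\dots,y_n)\mapsto(y_{\pi(1)},\dots,y_{\pi(n)})$, which is a bijection from $R_\pi$ onto the ``ordered'' simplex $R_{\mathrm{id}}=\{x_1<\cdots<x_n\}$ with Jacobian of absolute value $1$.

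Next I would compute, for any Borel set $A\subseteq R_{\mathrm{id}}$,
$$
\PP\big((X_{(1)},\dots,X_{(n)})\in A\big)=\sum_{\pi\in S_n}\PP\big((X_1,\dots,X_n)\in R_\pi,\ T(X_1,\dots,X_n)\in A\big)=\sum_{\pi\in S_n}\int_{T^{-1}(A)\cap R_\pi}\prod_{i=1}^n f(y_i)\,dy,
$$
using independence to write the joint density of $(X_1,\dots,X_n)$ as $\prod_i f(y_i)$. Changing variables in each term via the permutation $\pi$ (Jacobian $1$) sends the integral over $T^{-1}(A)\cap R_\pi$ to an integral over $A$ of $\prod_i f(x_{\pi^{-1}(i)})$; but $\prod_{i=1}^n f(x_i)$ is a symmetric function of its arguments, so every term equals $\int_A \prod_{i=1}^n f(x_i)\,dx$. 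Summing the $n!$ identical terms gives $\PP\big((X_{(1)},\dots,X_{(n)})\in A\big)=n!\int_A\prod_{i=1}^n f(x_i)\,dx$, which identifies the density on $R_{\mathrm{id}}$ as $n!\prod_{i=1}^n f(x_i)$. Finally, since $X_{(1)}\le X_{(2)}\le\cdots\le X_{(n)}$ by definition, the law of $(X_{(1)},\dots,X_{(n)})$ is supported on $\{x_1\le\cdots\le x_n\}$, whose interior is $R_{\mathrm{id}}$; the boundary has Lebesgue measure zero, so the density may be taken to be $0$ off $R_{\mathrm{id}}$, giving the claimed formula.

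There is no serious obstacle here—the statement is classical—but the step requiring the most care is the change-of-variables bookkeeping over the $n!$ simplices together with the invocation of symmetry of $\prod_i f$; one must be explicit that the $R_\pi$ are disjoint, that they exhaust $\R^n$ up to the null set of ties, and that the sorting map is a measure-preserving bijection on each piece. Everything else is routine, and one could alternatively derive the same result by writing the joint CDF $\PP(X_{(1)}\le t_1,\dots,X_{(n)}\le t_n)$ as a sum over permutations and differentiating, but the pushforward argument above is cleaner.
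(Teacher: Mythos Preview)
Your argument is correct and is the standard pushforward/change-of-variables proof of this classical fact. The paper itself does not prove this lemma; it is simply stated as a known result and then used to derive $F(t,n)$, so there is no paper proof to compare against.
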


Then we can derive $F(t,n)$. Denote $x=(x_1,\ldots,x_n)$ and $y=(y_1,\ldots,y_n)$. By Lemma \ref{lemma},
\begin{equation*}
	F(t,n)=\int_{0<y_1<\ldots<y_n} n! e^{-\sum_{i=1}^{n}y_i}\Big(\int_D n! e^{-\sum_{i=1}^{n}x_i}dx\Big)dy,
\end{equation*}
where $$D=\{x\in\R^n: 0<x_1<ty_1,x_1<x_2<ty_2,\ldots,x_{n-1}<x_n<ty_n\}.$$

For $1=i_1<i_2<\ldots<i_k\leq n$, define $$j_1=i_2-i_1, \ j_2=i_3-i_2,\ \ldots, \ j_{k-1}=i_k-i_{k-1}, j_k=n+1-i_k.$$  Let $g(y_1,\ldots,y_n)=\int_D e^{-\sum_{i=1}^{n}x_i}dx$. By integrating $x_n$, then $x_{n-1}$, \ldots, finally $x_1$ successively we have
\begin{equation*}
	g(y_1,\ldots,y_n)=\sum \frac{(-1)^k}{j_1!\cdots j_k!} \Big[e^{-j_1 ty_{i_1}-\cdots-j_k ty_{i_k}}-e^{-j_2 t y_{i_2}-\cdots-j_k ty_{i_k}}\Big],
\end{equation*}
where the summation is over any possible $1=i_1<i_2<\ldots<i_k\leq n$.

\begin{lemma}
	\label{int}
	For $c_1,\ldots,c_n>0$, we have
	\begin{equation*}
		\int_{0<y_1<\cdots<y_n} e^{-c_1 y_1-\cdots-c_n y_n} dy=\frac{1}{c_n (c_n+c_{n-1})\cdots (c_n+\cdots+c_1)}.
	\end{equation*}
\end{lemma}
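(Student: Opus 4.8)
The plan is to prove this by induction on $n$, peeling off the outermost integration variable $y_n$. For the base case $n=1$ the claim is the elementary identity $\int_0^\infty e^{-c_1 y_1}\,dy_1 = 1/c_1$. For the inductive step, fix $y_1<\cdots<y_{n-1}$ and integrate $y_n$ over $(y_{n-1},\infty)$; since $c_n>0$,
$$\int_{y_{n-1}}^{\infty} e^{-c_n y_n}\,dy_n = \frac{1}{c_n}\, e^{-c_n y_{n-1}}.$$
Substituting this back and absorbing the factor $e^{-c_n y_{n-1}}$ into the $y_{n-1}$ term reduces the original $n$-fold integral to $\frac{1}{c_n}$ times an $(n-1)$-fold integral of exactly the same type, now with coefficient vector $(c_1,\dots,c_{n-2},c_{n-1}+c_n)$. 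Applying the induction hypothesis to that integral produces
$$\frac{1}{c_n}\cdot\frac{1}{(c_{n-1}+c_n)\,(c_{n-2}+c_{n-1}+c_n)\cdots(c_1+\cdots+c_n)},$$
which is precisely the asserted product. Every interchange of the order of integration is legitimate by Tonelli's theorem because the integrand is nonnegative.

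An equivalent and perhaps more transparent route is the change of variables $u_1=y_1$ and $u_k=y_k-y_{k-1}$ for $k=2,\dots,n$, which maps the simplex $\{0<y_1<\cdots<y_n\}$ bijectively onto $(0,\infty)^n$ with unit Jacobian. Writing $y_k=u_1+\cdots+u_k$ and swapping the order of summation gives $\sum_{i=1}^n c_i y_i = \sum_{j=1}^n (c_j+c_{j+1}+\cdots+c_n)\,u_j$, so the integral factorizes as a product of $n$ one-dimensional exponential integrals, the $j$-th of which contributes $1/(c_j+\cdots+c_n)$. Multiplying these gives the stated formula directly, with the reindexing $j\mapsto n,n-1,\dots,1$ matching the factors $c_n,\ c_n+c_{n-1},\ \dots,\ c_n+\cdots+c_1$.

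There is essentially no hard step here; the only point requiring care is that each partial sum $c_j+\cdots+c_n$ is strictly positive, which is guaranteed by $c_i>0$ and is exactly what makes every one-dimensional integral finite and justifies the use of Tonelli/Fubini. The role of this lemma in what follows is purely computational: it is the tool that will be applied to the exponents $j_1 t y_{i_1}+\cdots+j_k t y_{i_k}$ appearing in $g(y_1,\dots,y_n)$ to evaluate $F(t,n)=\PP(q_n^{(1)}\le t)$ term by term, so the only genuine work when it is invoked is keeping the index bookkeeping straight.
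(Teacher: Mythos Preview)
Your proof is correct. The paper states this lemma without proof (treating it as a standard integration identity), so there is nothing to compare against; both your induction argument and your change-of-variables argument are valid and complete.
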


For simplicity, define $[c_1,\ldots,c_n]$ as
\begin{equation*}
	[c_1,\ldots,c_n]=\frac{1}{c_n (c_n+c_{n-1})\cdots (c_n+\cdots+c_1)}.
\end{equation*}
Then we have
\begin{equation*}
	\begin{aligned}
		F(t,n)=(n!)^2 \sum \frac{(-1)^k}{j_1!\cdots j_k!}\{[tj_1+1,&1,\ldots,1,tj_2+1,1,\ldots,1,tj_k+1,1,\ldots,1] \\
		-[1,&1,\ldots,1,tj_2+1,1,\ldots,1,tj_k+1,1,\ldots,1]\},
	\end{aligned}
\end{equation*}
where the summation is over any possible $\sum_{i=1}^{k} j_i=n$, $j_i\geq 1$. For convenience we further let
$$(i_1,\ldots,i_k)_1=\frac{(-1)^k}{j_1!\cdots j_k!}[tj_1+1,1,\ldots,1,tj_2+1,1,\ldots,1,tj_k+1,1,\ldots,1],$$
$$(i_1,\ldots,i_k)_2=\frac{(-1)^k}{j_1!\cdots j_k!}[1,1,\ldots,1,tj_2+1,1,\ldots,1,tj_k+1,1,\ldots,1].$$
So we have
\begin{equation*}
	F(t,n)=(n!)^2 \sum \{(i_1,\ldots,i_k)_1-(i_1,\ldots,i_k)_2\},
\end{equation*}

\begin{theorem}
	\label{thm_point0}
	$$\PP(q_n^{(1)}\leq 1)=F(1,n)=\frac{1}{n+1}.$$
\end{theorem}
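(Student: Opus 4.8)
The plan is to bypass the explicit composition series for $F(t,n)$ and instead evaluate $\PP(q_n^{(1)}\le 1)$ by a direct combinatorial (ballot-type) argument, which is cleanest precisely at $t=1$. One can of course also check consistency by specializing the displayed formula for $F(t,n)$ at $t=1$, but that route forces a collapse of the sum over compositions $\sum j_i=n$ and is far less transparent, so I would not pursue it.

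First I would rewrite the event. Since $q_n^{(1)}=\max_{1\le i\le n}X_{(i)}/Y_{(i)}$ and every $Y_{(i)}>0$, the event $\{q_n^{(1)}\le 1\}$ is exactly $\{X_{(i)}\le Y_{(i)}\text{ for all }i=1,\dots,n\}$. A short argument with empirical distribution functions then shows this is equivalent to $\{\widehat F_n^X(t)\ge \widehat F_n^Y(t)\text{ for all }t\}$, where $\widehat F_n^X,\widehat F_n^Y$ are the empirical CDFs of the two samples: evaluating empirical-CDF dominance at $t=Y_{(i)}$ gives $X_{(i)}\le Y_{(i)}$, and conversely, for an arbitrary $t$ one counts $k=\#\{j:Y_j\le t\}$ and uses $X_{(k)}\le Y_{(k)}\le t$ to conclude $\widehat F_n^X(t)\ge k/n=\widehat F_n^Y(t)$.

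Next I would exploit exchangeability. Because $X_1,\dots,X_n,Y_1,\dots,Y_n$ are i.i.d.\ from a continuous law (unit exponential here, though only continuity is used), the $2n$ values are almost surely distinct, and, conditionally on their unordered set, each of the $\binom{2n}{n}$ ways of labelling the $2n$ ordered positions by ``$X$'' and ``$Y$'' is equally likely. Listing the merged sample in increasing order, the condition $\widehat F_n^X\ge\widehat F_n^Y$ is exactly the requirement that in every prefix of this list the number of $X$-labels is at least the number of $Y$-labels. Encoding an $X$-label as a rightward step and a $Y$-label as an upward step, the admissible label sequences are precisely the monotone lattice paths from $(0,0)$ to $(n,n)$ that stay weakly below the diagonal — Dyck paths — of which there are the Catalan number $C_n=\frac{1}{n+1}\binom{2n}{n}$. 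Dividing by $\binom{2n}{n}$ gives $\PP(q_n^{(1)}\le1)=\frac{1}{n+1}=F(1,n)$, as claimed.

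The only delicate points are the equivalences in the first step (keeping the inequality directions straight) and the reduction to the uniform labelling model, both routine once ties are discarded as a null event; the Catalan count itself is the classical ballot problem. The step I expect to need the most care is the empirical-CDF reformulation, since it is what licenses the purely combinatorial count; everything afterward is bookkeeping.
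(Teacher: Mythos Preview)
Your proof is correct and takes a genuinely different route from the paper. The paper evaluates $F(1,n)$ by specializing its general integral formula for $F(t,n)$ (built from the joint density of exponential order statistics and summed over compositions $j_1+\cdots+j_k=n$) at $t=1$, then observing a telescoping cancellation: at $t=1$ every term $(i_1,\ldots,i_k)_\ell$ with $i_k<n$ cancels against $-(i_1,\ldots,i_k,n)_\ell$, leaving only the two terms $-(1)_2-(1,n)_2$, whose explicit evaluation gives $1-\tfrac{n}{n+1}=\tfrac{1}{n+1}$. Your argument instead bypasses the integral entirely: you rewrite $\{q_n^{(1)}\le1\}$ as empirical-CDF dominance, invoke exchangeability of the $2n$ i.i.d.\ continuous observations to reduce to a uniform labelling problem, and count the admissible labellings as Dyck paths via the Catalan number. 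Your route is more elementary and, as you note, makes it transparent that the result depends only on continuity of the common law, not on the exponential assumption; the paper's route, while heavier at $t=1$, is a specialization of machinery designed to handle $F(t,n)$ for general $t$, where no such clean combinatorial reduction is available, so it fits naturally into their broader development.
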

\begin{proof}[Proof of Theorem \ref{thm_point0}]
	Let $t=1$. Notice the fact that for any $k>1$, $i_k<n$,
	$$(i_1,\ldots,i_k)_1=-(i_1,\ldots,i_k, n)_1,$$
	$$(i_1,\ldots,i_k)_2=-(i_1,\ldots,i_k, n)_2.$$
	So we have
	\begin{equation*}
		\begin{aligned}
			F(1,n)&=(n!)^2 \{-(1)_2-(1,n)_2\}\\
			&=(n!)^2\{\frac{1}{n!}\times\frac{1}{n!}-\frac{1}{(n-1)!}\times\frac{1}{(n+1)!}\}\\
			&=1-\frac{n}{n+1}=\frac{1}{n+1}.
		\end{aligned}
	\end{equation*}
\end{proof}
Theorem \ref{thm_point0} indicates that
$$\lim_{n\to\infty} \PP(q_n^{(1)}>1)=1,$$ and similarly, we can prove that $$\lim_{n\to\infty} \PP(q_n^{(2)}>1)=1.$$

\subsection{Proof of Theorem \ref{thm:mid}}
Before proving the theorem, we need some necessary preparations.

Inspired by the asymptotic normality of central order statistics, let $r$ be an integer satisfying $1\le r\le n$, and define
\begin{equation}
	a_{r,n}=\frac{\sqrt{r}}{\sqrt{(n+1)(n+1-r)}}, \quad b_{r,n}=\log\frac{n}{n-r+1}.
\end{equation}
Denote $\rho_{r,n}(t)=a_{r,n}t+b_{r,n}$. Then
\begin{equation}
	\label{eq_lower}
	\begin{aligned}
		\PP(a_{r,n}^{-1}(X_{(r)}-b_{r,n}) \leq t)&=\PP(X_{(r)} \leq \rho_{r,n}(t))\\&=\PP(\sum_{i=1}^{n} \Rm{1}_{(-\infty,\rho_{r,n}(t))}(X_i)\geq r),
	\end{aligned}
\end{equation}
\begin{equation}
	\label{eq_upper}
	\begin{aligned}
		\PP(a_{r,n}^{-1}(X_{(r)}-b_{r,n}) \geq t)&=\PP(X_{(r)} \geq \rho_{r,n}(t))\\&=\PP(\sum_{i=1}^{n} \Rm{1}_{(-\infty,\rho_{r,n}(t))}(X_i)\leq r).
	\end{aligned}
\end{equation}
Let $B_i=\Rm{1}_{(-\infty,\rho_{r,n}(t))}(X_i)$. Notice that $B_1,\ldots,B_n$ are i.i.d Bernoulli r.v.'s with parameter $p=1-\exp(-\rho_{r,n}(t))$.

\begin{equation*}
	\sum_{i=1}^{n} \Rm{1}_{(-\infty,\rho_{r,n}(t))}(X_i)=\sum_{i=1}^{n} B_i \sim \Rm{Binomial}(n,p).
\end{equation*}
By Hoeffding's inequality, we get
\begin{equation}
	\label{eq_tail}
	\PP(\sum_{i=1}^{n} \Rm{1}_{(-\infty,\rho_{r,n}(t))}(X_i)\geq r) \leq \exp(-2\frac{(np-r)^2}{n}), \quad r\geq np,
\end{equation}
\begin{equation}
	\label{eq_head}
	\PP(\sum_{i=1}^{n} \Rm{1}_{(-\infty,\rho_{r,n}(t))}(X_i)\leq r) \leq \exp(-2\frac{(np-r)^2}{n}), \quad r\leq np.
\end{equation}

Next, we begin to prove the theorem. The proof of Theorem \ref{thm:mid} contains two main parts. In the first part, we fix $\alpha_n$ and choose $\beta_n$ according to Condition \ref{cond:1}. In the second part, we fix $\beta_n$ and choose $\alpha_n$ according to Condition \ref{cond:1}. In these two parts, we prove the result respectively. Combining the two results in two parts, we prove the final result.

(i). Let $$\alpha_n=\alpha, \quad \beta_n=1-\sqrt{\frac{\log n \log\log n}{n}},$$ where $\alpha>0$ is a fixed number chosen arbitrary. We aim to prove
$$\widetilde{Z}_n^{(1)}\overset{\Rm{d}}{=}\max\{X_{(r)}/Y_{(r)}: \alpha_n n< r < \beta_n n\} \overset{P}{\to} 1.$$
For any $\varepsilon>0$, choose $\delta>0$ such that
\begin{equation}
	\delta<\frac{\varepsilon}{2+\varepsilon}\log\frac{1}{1-\alpha}.
\end{equation}
Since $$b_{r,n}=\log\frac{n}{n-r+1}\in (\log{\frac{1}{1-\alpha_n}}, \log{\frac{1}{1-\beta_n}}),$$
for any $0<\tau<\delta$, the inequality
\begin{equation}
	\label{eq_delta}
	\frac{b_{r,n}+\tau}{b_{r,n}-\tau}\leq1+\varepsilon
\end{equation}
always holds for any integer $n$ and $\alpha_n n < r < \beta_n n$.

Let $A_{r,n}=\{X_{(r)}/Y_{(r)}\leq 1+\varepsilon\}$. Then
\begin{equation*}
	\{\widetilde{Z}_n^{(1)}\leq 1+\varepsilon\} = \mathop\bigcap_{\alpha_n n \leq r \leq \beta_n n} A_{r,n},
\end{equation*}
\begin{equation*}
	\PP(\widetilde{Z}_n^{(1)}> 1+\varepsilon)=\PP(\mathop\bigcup_{\alpha_n n \leq r \leq \beta_n n} A_{r,n}^c)\leq \sum_{\alpha_n n \leq r \leq \beta_n n} \PP(A_{r,n}^c).
\end{equation*}
From \eqref{eq_delta}, we have $A_{r,n} \supset \{X_{(r)}<b_{r,n}+\delta\} \cap\{Y_{(r)}>b_{r,n}-\delta\}$. So
\begin{equation}
	\label{eq_Pdecom1}
	\begin{aligned}
		\PP(A_{r,n}^c) &\leq \PP(\{X_{(r)}\geq b_{r,n}+\delta\}\cup \{Y_{(r)}\leq b_{r,n}-\delta\})\\&\leq\PP(X_{(r)}\geq b_{r,n}+\delta)+\PP(Y_{(r)}\leq b_{r,n}-\delta)\\
		&=\PP(a_{r,n}^{-1}(X_{(r)}-b_{r,n})\geq a_{r,n}^{-1}\delta)+\PP(a_{r,n}^{-1}(Y_{(r)}-b_{r,n})\leq -a_{r,n}^{-1} \delta).
	\end{aligned}
\end{equation}

By \eqref{eq_upper} and \eqref{eq_head} we can bound $\PP(a_{r,n}^{-1}(X_{(r)}-b_{r,n})\geq a_{r,n}^{-1}\delta)$. We have
\begin{equation*}
	t=a_{r,n}^{-1}\delta,\quad p=1-\exp(-\rho_{r,n}(t))=1-\frac{n-r+1}{n} e^{-\delta}.
\end{equation*}
Apparently \begin{equation*}
	\begin{aligned}
		np-r&=(n-r)(1-e^{-\delta})-e^{-\delta}\geq (1-\beta_n)n(1-e^{-\delta})-e^{-\delta}>0
	\end{aligned}
\end{equation*}
for large enough $n$. So the condition of Hoeffding's inequality \eqref{eq_head} holds. Then we have
\begin{equation*}
	\begin{aligned}
		\PP(a_{r,n}^{-1}(X_{(r)}-b_{r,n})\geq a_{r,n}^{-1}\delta_n) & \leq \exp(-2\frac{(np-r)^2}{n})\\
		&\leq\exp(-2n[(1-\beta_n)(1-e^{-\delta})-\frac{1}{n}e^{-\delta}]^2) \\
		&=\exp(-2n[\sqrt{\frac{\log n \log\log n}{n}}(1-e^{-\delta})-\frac{1}{n}e^{-\delta}]^2) \\
		&=\exp(-2 \log n\log\log n(1-e^{-\delta})^2+o(1)) \\
		&\leq 2\exp(-2\log n)=\frac{2}{n^2}
	\end{aligned}
\end{equation*}
for sufficiently large $n$.

By \eqref{eq_lower} and \eqref{eq_tail} we can bound $\PP(a_{r,n}^{-1}(Y_{(r)}-b_{r,n})\leq -a_{r,n}^{-1} \delta)$. We have
\begin{equation*}
	t=-a_{r,n}^{-1}\delta,\quad p=1-\exp(-\rho_{r,n}(t))=1-\frac{n-r+1}{n} e^{\delta}.
\end{equation*}
Apparently, since $e^\delta>1$, $$np-r=(n-r)(1-e^{\delta})-e^{\delta}\leq (1-\beta_n) n (1-e^\delta)-e^{\delta}<-e^\delta<0,$$ the condition of Hoeffding's inequality \eqref{eq_tail} holds. Then we have
\begin{equation*}
	\begin{aligned}
		\PP(a_{r,n}^{-1}(Y_{(r)}-b_{r,n})\leq -a_{r,n}^{-1} \delta) & \leq \exp(-2\frac{(np-r)^2}{n})\\
		&=\exp(-2n[(1-\beta_n)(1-e^{\delta})-\frac{1}{n}e^\delta]^2) \\
		&=\exp(-2n[(1-e^\delta)\sqrt{\frac{\log n \log\log n}{n}}-\frac{1}{n} e^\delta]^2)\\
		&=\exp(-2 \log n\log\log n(1-e^{-\delta})^2+o(1)) \\
		&\leq 2\exp(-2\log n)=\frac{2}{n^2}
	\end{aligned}
\end{equation*}
for sufficiently large $n$.

Recall \eqref{eq_Pdecom1}, for sufficiently large $n$, we have
\begin{equation*}
	\begin{aligned}
		\PP(A_{r,n}^c) &\leq \frac{4}{n^2}
	\end{aligned}
\end{equation*}
holds for every $\alpha_n n < r < \beta_n n$. This implies
\begin{equation*}
	\PP(\widetilde{Z}_n^{(1)}> 1+\varepsilon)\leq \sum_{\alpha_n n \leq r \leq \beta_n n} \PP(A_{r,n}^c) <\frac{4n}{n^2} \to 0
\end{equation*}
as $n\to\infty$.

(ii). Let $$\alpha_n=\sqrt{\frac{\log n \log\log n}{n}}, \quad \beta_n=\beta,$$ where $\beta>0$ is a fixed number chosen arbitrary. We aim to prove
$$\widetilde{Z}_n^{(2)}\overset{\Rm{d}}{=}\max\{X_{(r)}/Y_{(r)}: \alpha_n n< r < \beta_n n\} \overset{P}{\to} 1.$$
For any $\varepsilon>0$, this time we choose $$\delta_n=\sqrt{\frac{\log n \log\log\log n}{n}}.$$
Notice that $\log(1-x)^{-1}>x$, so
\begin{equation}
	\delta_n<\log\frac{1}{1-\alpha_n}, \quad  \delta_n<\frac{\varepsilon}{2+\varepsilon}\log\frac{1}{1-\alpha_n}
\end{equation}
still hold for sufficiently large $n$.
Since $$b_{r,n}=\log\frac{n}{n-r+1}\in (\log{\frac{1}{1-\alpha_n}}, \log{\frac{1}{1-\beta_n}}),$$
for any $0<\tau<\delta_n$, the inequality
\begin{equation}
	\label{eq_deltan}
	\frac{b_{r,n}+\tau}{b_{r,n}-\tau}\leq1+\varepsilon
\end{equation}
always holds for any integer $n$ and $\alpha_n n < r < \beta_n n$.

Let $A_{r,n}=\{X_{(r)}/Y_{(r)}\leq 1+\varepsilon\}$. Then
\begin{equation*}
	\{\widetilde{Z}_n^{(1)}\leq 1+\varepsilon\} = \mathop\bigcap_{\alpha_n n \leq r \leq \beta_n n} A_{r,n},
\end{equation*}
\begin{equation*}
	\PP(\widetilde{Z}_n^{(2)}> 1+\varepsilon)=\PP(\mathop\bigcup_{\alpha_n n \leq r \leq \beta_n n} A_{r,n}^c)\leq \sum_{\alpha_n n \leq r \leq \beta_n n} \PP(A_{r,n}^c).
\end{equation*}
From \eqref{eq_deltan}, we have $A_{r,n} \supset \{X_{(r)}<b_{r,n}+\delta_n\} \cap\{Y_{(r)}>b_{r,n}-\delta_n\}$. So
\begin{equation}
	\label{eq_Pdecom2}
	\begin{aligned}
		\PP(A_{r,n}^c) &\leq \PP(\{X_{(r)}\geq b_{r,n}+\delta_n\}\cup \{Y_{(r)}\leq b_{r,n}-\delta_n\})\\&\leq\PP(X_{(r)}\geq b_{r,n}+\delta_n)+\PP(Y_{(r)}\leq b_{r,n}-\delta_n)\\
		&=\PP(a_{r,n}^{-1}(X_{(r)}-b_{r,n})\geq a_{r,n}^{-1}\delta_n)+\PP(a_{r,n}^{-1}(Y_{(r)}-b_{r,n})\leq -a_{r,n}^{-1} \delta_n).
	\end{aligned}
\end{equation}
By \eqref{eq_upper} and \eqref{eq_head} we can bound $\PP(a_{r,n}^{-1}(X_{(r)}-b_{r,n})\geq a_{r,n}^{-1}\delta_n)$. We have
\begin{equation*}
	t=a_{r,n}^{-1}\delta_n,\quad p=1-\exp(-\rho_{r,n}(t))=1-\frac{n-r+1}{n} e^{-\delta_n}.
\end{equation*}
Apparently \begin{equation*}
	\begin{aligned}
		np-r&=(n-r)(1-e^{-\delta})-e^{-\delta}\geq (1-\beta)n(1-e^{-\delta_n})-e^{-\delta_n}>0
	\end{aligned}
\end{equation*}
for large enough $n$. So the condition of Hoeffding's inequality \eqref{eq_head} holds. Then we have
\begin{equation*}
	\begin{aligned}
		\PP(a_{r,n}^{-1}(X_{(r)}-b_{r,n})\geq a_{r,n}^{-1}\delta_n) & \leq \exp(-2\frac{(np-r)^2}{n})\\
		&\leq\exp(-2n[(1-\beta)(1-e^{-\delta_n})-\frac{1}{n}e^{-\delta_n}]^2) \\
		&=\exp(-2(1-\beta)^2 n(\delta_n-\frac{1}{2}\delta_n^2+\frac{1}{6}\delta_n^3)^2+o(1))\\
		&=\exp(-2(1-\beta)^2 n \delta_n^2+o(1)) \\
		&=\exp(-2(1-\beta)^2 \log n \log\log\log n+o(1)) \\
		&\leq 2\exp(-2\log n)=\frac{2}{n^2}
	\end{aligned}
\end{equation*}
for sufficiently large $n$.

By \eqref{eq_lower} and \eqref{eq_tail} we can bound $\PP(a_{r,n}^{-1}(Y_{(r)}-b_{r,n})\leq -a_{r,n}^{-1} \delta_n)$. We have
\begin{equation*}
	t=-a_{r,n}^{-1}\delta_n,\quad p=1-\exp(-\rho_{r,n}(t))=1-\frac{n-r+1}{n} e^{\delta_n}.
\end{equation*}
Apparently, since $e^\delta_n>1$, $$np-r=(n-r)(1-e^{\delta_n})-e^{\delta_n}\leq (1-\beta) n (1-e^\delta_n)-e^{\delta_n}<-e^{\delta_n}<0,$$ the condition of Hoeffding's inequality \eqref{eq_tail} holds. Then we have
\begin{equation*}
	\begin{aligned}
		\PP(a_{r,n}^{-1}(Y_{(r)}-b_{r,n})\leq -a_{r,n}^{-1} \delta_n) & \leq \exp(-2\frac{(np-r)^2}{n})\\
		&=\exp(-2n[(1-\beta)(1-e^{\delta_n})-\frac{1}{n}e^{\delta_n}]^2) \\
		&=\exp(-2(1-\beta)^2 n(\delta_n+\frac{1}{2}\delta_n^2+\frac{1}{6}\delta_n^3)^2+o(1))\\
		&=\exp(-2(1-\beta)^2 n \delta_n^2+o(1)) \\
		&=\exp(-2(1-\beta)^2 \log n \log\log\log n+o(1)) \\
		&\leq 2\exp(-2\log n)=\frac{2}{n^2}
	\end{aligned}
\end{equation*}
for sufficiently large $n$.

Recall \eqref{eq_Pdecom2}, for sufficiently large $n$, we have
\begin{equation*}
	\begin{aligned}
		\PP(A_{r,n}^c) &\leq \frac{4}{n^2}
	\end{aligned}
\end{equation*}
holds for every $\alpha_n n < r < \beta_n n$. This implies
\begin{equation*}
	\PP(\widetilde{Z}_n^{(2)}> 1+\varepsilon)\leq \sum_{\alpha_n n \leq r \leq \beta_n n} \PP(A_{r,n}^c) <\frac{4n}{n^2} \to 0
\end{equation*}
as $n\to\infty$.

(iii). For any $\varepsilon>0$, since $\widetilde{Z}_n = \max\{\widetilde{Z}_n^{(1)}, \widetilde{Z}_n^{(2)}\}$ and
\begin{equation}
	\PP(\widetilde{Z}_n^{(1)}> 1+\varepsilon) \to 0, \quad \PP(\widetilde{Z}_n^{(2)}> 1+\varepsilon) \to 0
\end{equation}
as $n\to\infty$, we have
\begin{equation*}
	\PP(\widetilde{Z}_n> 1+\varepsilon) \to 0
\end{equation*}
as $n\to\infty$. Changing the role of $X$ and $Y$, we have
\begin{equation*}
	\PP(\widetilde{Z}_n< 1-\varepsilon) \to 0
\end{equation*}
as $n\to\infty$. So we conclude that
$$ \widetilde{Z}_n \overset{P}{\to} 1.$$

\subsection{Proof of Corollary \ref{thm:threshold}}
Based on Theorem \ref{thm:mid}, we only need to prove that there exist $\alpha_n$ and $\beta_n$ satisfy Condition \ref{cond:1} such that
\begin{equation*}
	\PP(X_{\alpha_n n:n}\leq u_n< v_n\leq X_{\beta_n n:n},Y_{\alpha_n n:n}\leq u_n< v_n\leq Y_{\beta_n n:n})\to 1.
\end{equation*}
We can rewrite it in an equivalent form
\begin{equation}
	\label{eq:equi}
	\PP(\alpha_n\leq \widehat{F}^X_n(u_n)<\widehat{F}_n^X(v_n)\leq\beta_n,\alpha_n\leq \widehat{F}^Y_n(u_n)<\widehat{F}_n^Y(v_n)\leq\beta_n)\to 1,
\end{equation}
where $F_n^X(t)$ and $F_n^Y(t)$ are empirical distribution functions of $X_1,\ldots,X_n$ and $Y_1,\ldots,Y_n$ respectively. Since
\begin{equation}
	\label{eq:sq1}
	\begin{aligned}
		&\PP(\alpha_n\leq \widehat{F}^X_n(u_n)<\widehat{F}_n^X(v_n)\leq\beta_n,\alpha_n\leq \widehat{F}^Y_n(u_n)<\widehat{F}_n^Y(v_n)\leq\beta_n) \\
		&=\PP(\alpha_n\leq \widehat{F}^X_n(u_n)<\widehat{F}_n^X(v_n)\leq\beta_n)^2, \\
	\end{aligned}
\end{equation}
it suffices to prove that
\begin{equation}
	\label{eq:suff}
	\PP(\alpha_n\leq \widehat{F}^X_n(u_n)<\widehat{F}_n^X(v_n)\leq\beta_n)\to 1.
\end{equation}
We set
$$\alpha_n=\frac{F(u_n)}{2},\ \beta_n=\frac{1+F(v_n)}{2}.$$
It is obvious $\{\alpha_n,\beta_n\}$ satisfies \eqref{condition1} in Condition \ref{cond:1}.
If
$$\abs{\widehat{F}^X_n(u_n)-F(u_n)}<\frac{F(u_n)}{2},\ \abs{\widehat{F}^X_n(v_n)-F(v_n)}<\frac{1-F(v_n)}{2},$$
then $\alpha_n\leq \widehat{F}^X_n(u_n)<\widehat{F}_n^X(v_n)\leq\beta_n$ holds. Thus, it suffices to prove that
\begin{equation}
	\label{eq:finalsuff}
	\PP\left(\sup_t\abs{\widehat{F}^X_n(t)-F(t)}<\min\{\frac{F(u_n)}{2},\ \frac{1-F(v_n)}{2}\}\right)\to 1.
\end{equation}
We have
\begin{equation}
	\label{eq:com1}
	\sqrt{n}\sup_t\abs{\widehat{F}^X_n(t)-F(t)}\xrightarrow{d}\max_{0\leq s\leq 1}\abs{B_s-sB_1},
\end{equation}
where $B_t$ is a Brownian motion starting at 0 (\cite{durrett2010probability}). $\max_{0\leq t\leq 1}\abs{B_t-tB_1}$ have a non-degenerate distribution over $\mathbb{R}^+$.
Combining
\begin{equation}
	\label{eq:com2}
	\min\{F(u_n),1-F(v_n)\}/\sqrt{\frac{\log n}{n}}\to \infty	
\end{equation}
and \eqref{eq:com1}, we have \eqref{eq:finalsuff} holds.

\subsection{Proof of Theorem \ref{thm:right} }
Assume $X_1,\ldots,X_n$ are i.i.d samples from distribution $F$ (with density $f$). If there exist constant $a_n>0$ and $b_n$ such that $$F^n(a_n x+b_n) \to G(x)$$
at all continuity points of $G$, where $G$ is the distribution function of one of the three types: Fr\'echet, Weibull and Gumbel, we say $F\in \mathcal{D}(G)$.

The following lemma (see page 307 of \cite{David2003Order}) serves the key to our proof.
\begin{lemma}
	When $F \in \mathcal{D}(G)$, assume $$n a_n f(a_n x+b_n) [F(a_n x+b_n)]^{n-1} \to g(x).$$ The $k$-dimensional vector
	\begin{equation}
		\label{eq_vector}
		(\frac{X_{n:n}-b_n}{a_n}, \cdots, \frac{X_{n-k+1:n}-b_n}{a_n})
	\end{equation}
	converges in distribution to the vector $(W_1,\ldots,W_k)$, whose joint pdf is
	$$g(w_1,\ldots,w_k)=G(w_k)\prod_{i=1}^{k} \frac{g(w_i)}{G(w_i)}, \quad w_1>\cdots>w_k.$$
	Additionally, the pdf of the random vector in \eqref{eq_vector} is
	$$g_n (x_1,\ldots,x_n)=[F(a_n x_k+b_n)]^{n-k} \prod_{i=1}^{k} (n-i+1)a_n f(a_n x_i+b_n).$$
\end{lemma}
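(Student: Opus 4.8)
The plan is to obtain the exact finite-sample joint density of the $k$ largest order statistics, substitute the affine normalization to recover the stated finite-$n$ formula, pass to the limit factor by factor, and then invoke Scheff\'e's lemma to turn pointwise density convergence into convergence in distribution.

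First I would write down the joint density of the top $k$ order statistics of an i.i.d.\ sample $X_1,\dots,X_n$ with continuous c.d.f.\ $F$ and density $f$: for $x_1>x_2>\cdots>x_k$,
\[
f_{X_{n:n},\dots,X_{n-k+1:n}}(x_1,\dots,x_k)=\frac{n!}{(n-k)!}\,[F(x_k)]^{\,n-k}\prod_{i=1}^{k}f(x_i),
\]
which follows from the multinomial counting argument ($n-k$ observations lie below $x_k$, one sits at each $x_i$), or by integrating the joint density of $(X_{1:n},\dots,X_{n:n})$ from Lemma~\ref{lemma} over its $n-k$ smallest coordinates. Applying the change of variables $x_i=(X_{n-i+1:n}-b_n)/a_n$, whose Jacobian is $a_n^{k}$, and writing $n!/(n-k)!=\prod_{i=1}^{k}(n-i+1)$, immediately gives the claimed density $g_n$ of the normalized vector in \eqref{eq_vector}.

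Next I would take pointwise limits on the cone $\{x_1>\cdots>x_k\}$, at points where $G(x_k)>0$. Since $F\in\mathcal{D}(G)$ gives $[F(a_nx+b_n)]^{n}\to G(x)$, whence $F(a_nx+b_n)\to1$, we obtain $[F(a_nx_k+b_n)]^{n-k}\to G(x_k)$; combining the assumed local limit $na_nf(a_nx+b_n)[F(a_nx+b_n)]^{n-1}\to g(x)$ with $[F(a_nx+b_n)]^{n-1}\to G(x)$ yields $na_nf(a_nx+b_n)\to g(x)/G(x)$, and since $k$ is fixed, $(n-i+1)/n\to1$, so $(n-i+1)a_nf(a_nx_i+b_n)\to g(x_i)/G(x_i)$ for each $i$. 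Multiplying, $g_n(x_1,\dots,x_k)\to G(x_k)\prod_{i=1}^{k}g(x_i)/G(x_i)$ almost everywhere, which is exactly the asserted limiting density (outside $\{G(x_k)>0\}$ the densities $g_n$ tend to $0$). Each $g_n$ is a probability density and the $g_n$ converge a.e.\ to a nonnegative function, so Scheff\'e's lemma gives $\int|g_n-g|\to0$, hence convergence in distribution — in fact in total variation — of the normalized top-$k$ vector, provided the limit is itself a probability density.

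The main obstacle is that last proviso: one must check that $G(w_k)\prod_{i=1}^{k}g(w_i)/G(w_i)$ integrates to $1$ over $\{w_1>\cdots>w_k\}$, so that Scheff\'e applies. One route is to quote the known structure of the limiting extremal process, whose top $k$ points have exactly this joint density; a self-contained route is a direct integration carried out separately for the Fr\'echet, Gumbel, and Weibull forms of $G$, with care at a finite endpoint of the support where some limiting coordinates become pinned. A minor secondary point is that the argument genuinely uses $k$ fixed, through $(n-i+1)/n\to1$ uniformly over $i\le k$; a different treatment would be needed for $k=k_n\to\infty$.
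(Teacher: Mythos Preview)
The paper does not prove this lemma at all; it is stated with a citation to page~307 of \cite{David2003Order} and then applied. Your self-contained argument via the exact finite-sample density, change of variables, pointwise limits, and Scheff\'e is the standard route and is correct.

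One remark on the point you flag as the ``main obstacle'': the verification that the limit $G(w_k)\prod_{i=1}^{k}g(w_i)/G(w_i)$ integrates to $1$ over $\{w_1>\cdots>w_k\}$ does not require a case split over the three extreme-value types. The substitution $v_i=-\log G(w_i)$, with $dv_i=-g(w_i)/G(w_i)\,dw_i$ and $G(w_k)=e^{-v_k}$, turns the integral into
\[
\int_{0<v_1<\cdots<v_k} e^{-v_k}\,dv_1\cdots dv_k
=\int_0^\infty \frac{v_k^{\,k-1}}{(k-1)!}\,e^{-v_k}\,dv_k=1,
\]
which handles all three types at once and makes the application of Scheff\'e immediate.
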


In our proof of Theorem \ref{thm:right}, we need to apply the lemma to exponential distribution $F(x)=1-\exp(-x)$. In this case, $a_n=1$ and $b_n=\ln n$, and
\begin{equation}
	\label{eq_limit_exp}
	g(w_1,\ldots,w_k)=\exp(-\exp(-w_k)-\sum_{i=1}^{k} w_i),
\end{equation}
\begin{equation}
	\label{eq_n_exp}
	g_n(x_1,\ldots,x_k)=F(x_k+\ln n)^{n-k}\prod_{i=1}^{k} \frac{n-i+1}{n} \exp(-x_i).
\end{equation}

In our proof of Theorem \ref{thm:left}, we need to apply the lemma to Fr\'echet distribution $F(x)=\exp(-x^{-1})$. In this case, $a_n=n$ and $b_n=0$, and
\begin{equation}
	\label{eq_limit_frechet}
	g(w_1,\ldots,w_k)=\exp(-w_k^{-1}) \prod_{i=1}^{k} w_i^{-2},
\end{equation}
\begin{equation}
	\label{eq_n_frechet}
	g_n(x_1,\ldots,x_k)=F(nx_k)^{n-k} \prod_{i=1}^{k} \frac{n-i+1}{n}  x_i^{-2} \exp(-\frac{1}{nx_i}).
\end{equation}

\begin{proof}[\bf Proof of Theorem \ref{thm:right}]
	Denote $$W_1=X_{n:n}-\ln n, \ldots, W_k=X_{n-k+1:n}-\ln n,$$
	$$Z_1=Y_{n:n}-\ln n, \ldots, Z_k=Y_{n-k+1:n}-\ln n.$$
	For any $t\in\R$,
	\begin{equation*}
		\begin{aligned}
			\PP(R_n <t)&=\PP(\frac{X_{n:n}}{Y_{n:n}}<t, \ldots, \frac{X_{n-k+1:n}}{Y_{n-k+1:n}}<t) \\
			&=\PP(\frac{W_1+\ln n}{Z_1+\ln n}<t, \ldots, \frac{W_k+\ln n}{Z_k+\ln n}<t) \\
			&=\PP(W_1<tZ_1+(t-1)\ln n,\ldots, W_k<tZ_k+(t-1)\ln n)\\
			&=\int_{D(n,t)} g_n(w_1,\ldots,w_k) g_n(z_1,\ldots,z_k) dw dz,
		\end{aligned}
	\end{equation*}
	where
	\begin{equation}
		\label{eq_set_exp}
		\begin{aligned}
			D(n,t)=\{(w,z):z_1>\cdots>z_k, \
			w_2<&w_1<tz_1+(t-1)\ln n, \\
			& \cdots \\
			w_k<&w_{k-1}<tz_{k-1}+(t-1)\ln n,\\
			-\ln n <&w_k<tz_k+(t-1)\ln n \}.
		\end{aligned}
	\end{equation}
	From \eqref{eq_n_exp}, we can observe that $g_n \nearrow g$ as $n\to\infty$.
	
	When $t>1$, the set $$D(n,t) \overset{n\to\infty}{\longrightarrow} \{(w,z): z_1>\cdots>z_k, \ w_1>\cdots>w_k\}\overset{\Delta}{=}D.$$
	Notice that $g\in L^1$, by the dominated convergence theorem, we get
	\begin{equation*}
		\begin{aligned}
			\PP(R_n <t)&=\int_{D(n,t)} g_n(w_1,\ldots,w_k) g_n(z_1,\ldots,z_k) dw dz \\
			&\to \int_D g(w_1,\ldots,w_k) g(z_1,\ldots,z_k) dw dz =1.
		\end{aligned}
	\end{equation*}
	
	When $t<1$, the set $$D(n,t) \overset{n\to\infty}{\longrightarrow} \varnothing.$$
	Notice that $g\in L^1$, by the dominated convergence theorem, we get
	\begin{equation*}
		\begin{aligned}
			\PP(R_n <t)&=\int_{D(n,t)} g_n(w_1,\ldots,w_k) g_n(z_1,\ldots,z_k) dw dz \\
			&\to \int_\varnothing g(w_1,\ldots,w_k) g(z_1,\ldots,z_k) dw dz =0.
		\end{aligned}
	\end{equation*}
	
	Thus we conclude that $R_n \overset{P}{\to} 1$.
\end{proof}
\subsection{Proof of Theorem \ref{thm:left}}

\begin{proof}[\bf The existence of non-degenerate distribution:]
We first prove that $q_{n,\Lambda_l}\triangleq\max_{i\in\Lambda_l}\{X_{(i)}/Y_{(i)}\}$ converges weakly to a random variable with a non-degenerate distribution
	Let $X'_1, \ldots, X'_n$ and $Y'_1,\ldots,Y'_n$ be i.i.d sample from unit Fr\'echet distribution. Then $$(\frac{X'_{n:n}}{Y'_{n:n}},\ldots,\frac{X'_{1:n}}{Y'_{1:n}})\overset{d}{=} (\frac{X_{1:n}}{Y_{1:n}},\ldots,\frac{X_{n:n}}{Y_{n:n}}).$$
	Denote $$W_1=nX'_{n:n}, \ldots, W_k=nX'_{n-k+1:n},$$
	$$Z_1=nY'_{n:n}, \ldots, Z_k=nY'_{n-k+1:n}.$$
	For any $t\in\R$,
	\begin{equation*}
		\begin{aligned}
			\PP(L_n <t)&=\PP(\frac{X_{1:n}}{Y_{1:n}}<t, \ldots, \frac{X_{k:n}}{Y_{k:n}}<t) \\
			&=\PP(\frac{X'_{n:n}}{Y'_{n:n}}<t, \ldots, \frac{X'_{n-k+1:n}}{Y'_{n-k+1:n}}<t) \\
			&=\PP(\frac{W_1}{Z_1}<t, \ldots, \frac{W_k}{Z_k}<t) \\
			&=\PP(W_1<tZ_1,\ldots, W_k<tZ_k)\\
			&=\int_{D(t)} g_n(w_1,\ldots,w_k) g_n(z_1,\ldots,z_k) dw dz,
		\end{aligned}
	\end{equation*}
	where
	\begin{equation}
		\label{eq_set_frechet}
		\begin{aligned}
			D(t)=\{(w,z):z_1>\cdots>z_k, \
			w_2<&w_1<tz_1, \\
			& \cdots \\
			w_k<&w_{k-1}<tz_{k-1},\\
			0 <&w_k<tz_k \}.
		\end{aligned}
	\end{equation}
	From \eqref{eq_n_frechet}, we can observe that $g_n \nearrow g$ as $n\to\infty$.
	Notice that $g\in L^1$, by the dominated convergence theorem, we get
	\begin{equation*}
		\begin{aligned}
			\PP(L_n <t)&=\int_{D(t)} g_n(w_1,\ldots,w_k) g_n(z_1,\ldots,z_k) dw dz \\
			&\to \int_{D(t)} g(w_1,\ldots,w_k) g(z_1,\ldots,z_k) dw dz
		\end{aligned}
	\end{equation*}
	
	Thus we conclude that $L_n$ converges weakly to a non-degenerate distribution.
\end{proof}

Given that the reciprocal of a unit exponential random variable is a unit Fr\'{e}chet random variable, then these two events
$\{\max_{n-\ell+1 \leq i \leq n}\{\frac{X_{(i)}}{Y_{(i)}}\}\leq t)\} $ and $\{\max_{1 \leq i \leq \ell}\{\frac{Y_{(i)}}{X_{(i)}}\}\leq t)\} $ have the same probability. In the subsequent proofs in this subsection, we assume both $X_i$ and $Y_i$ are unit Fr\'{e}chet random variables.

We restate Theorem \ref{thm:left} as
\begin{theorem}[Main theorem: formula of $R_m(t)$]
We have known

$$
g(w_1,...,w_m) = \exp(-w_m^{-1})\Pi_{i=1}^mw_i^{-2}
$$

$$
D(t) = \{(w,z):z_1>...>z_m,w_2<w_1<tz_1,...,w_m<w_{m-1}<tz_{m-1},0<w_m<tz_m\}
$$

Define $R_m(t)=\int _{D(t)}g(w_1,...,w_m)g(z_1,...,z_m)dwdz$,then we have
$$
R_{m}(t)=\frac{\sum_{j=0}^{m-1}(-1)^j(B_{m-1,j}\cdot k^{m-j-1})}{k^{2m-1}}, k=1+1/t, t\geq 0
$$

where $B_{m,j}=\frac{\binom{2m+2}{m-j}\binom{m+j}{m}}{m+1}$.
\end{theorem}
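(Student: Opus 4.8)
The plan is to first remove the geometry of $D(t)$ by giving the two blocks of variables a probabilistic interpretation. Write $g(w_1,\dots,w_m)=\exp(-w_m^{-1})\prod_{i=1}^{m}w_i^{-2}$ and perform the substitution $w_i=1/\gamma_i$; a one-line Jacobian computation shows that $g$ is the image of the density $e^{-\gamma_m}\mathbf 1\{0<\gamma_1<\dots<\gamma_m\}$. Hence, if $E_1,E_2,\dots$ and $E_1',E_2',\dots$ are two independent i.i.d.\ unit-exponential sequences with partial sums $\Gamma_i=\sum_{s\le i}E_s$ and $\Gamma_i'=\sum_{s\le i}E_s'$, then the two $g$-distributed (and independent) vectors in the definition of $R_m(t)$ are distributed as $(1/\Gamma_1,\dots,1/\Gamma_m)$ and $(1/\Gamma_1',\dots,1/\Gamma_m')$. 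The ordering constraints $z_1>\dots>z_m$ and $w_1>\dots>w_m>0$ in $D(t)$ then hold automatically, and the remaining binding constraints $w_i<t z_i$ become $\Gamma_i'<t\,\Gamma_i$, so
$$ R_m(t)=\PP\!\left(\Gamma_i'<t\,\Gamma_i,\ i=1,\dots,m\right)=\PP\!\left(\max_{1\le i\le m}\Gamma_i'/\Gamma_i\le t\right). $$

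Next I would turn this into a ballot-type probability via Poisson superposition. View $\Gamma_1'<\Gamma_2'<\cdots$ as the epochs of a rate-$1$ Poisson process (``$A$-points'') and $t\Gamma_1<t\Gamma_2<\cdots$ as those of an independent rate-$1/t$ process (``$B$-points''); the superposition has rate $k:=1+1/t$, and each of its points is an $A$-point with probability $p:=1/k$ and a $B$-point with probability $1-p=(k-1)/k$, independently. The event $\{\Gamma_i'<t\Gamma_i\text{ for all }i\le m\}$ depends only on the sequence of labels and says precisely that the $i$-th $A$-point precedes the $i$-th $B$-point for $i=1,\dots,m$. Encoding the labels as a lattice path ($A$ a unit step in the $x$-direction, $B$ a unit step in the $y$-direction), this is exactly the event that the path stays in $\{y\le x\}$ until it first reaches the line $x=m$ (after that the constraints are vacuous, and $x\to\infty$ a.s.). Such a path has $m$ $A$-steps and $\ell\in\{0,\dots,m-1\}$ $B$-steps and ends with an $A$-step, so
$$ R_m(t)=\sum_{\ell=0}^{m-1} N_{m,\ell}\,p^{m}(1-p)^{\ell}, $$
where $N_{m,\ell}$ counts lattice paths from $(0,0)$ to $(m-1,\ell)$ staying in $\{y\le x\}$; by the reflection principle $N_{m,\ell}=\binom{m-1+\ell}{\ell}-\binom{m-1+\ell}{\ell-1}=\frac{m-\ell}{m}\binom{m-1+\ell}{\ell}$.

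To reach the stated closed form I would substitute $p=1/k$, expand $(1-1/k)^{\ell}$ by the binomial theorem and collect powers of $1/k$, obtaining $R_m(t)=k^{-m}\sum_{j\ge0}(-1)^j k^{-j}\sum_{\ell\ge j}N_{m,\ell}\binom{\ell}{j}$. The theorem then reduces to the single identity
$$ \sum_{\ell=j}^{m-1}\frac{m-\ell}{m}\binom{m-1+\ell}{\ell}\binom{\ell}{j}=\frac1m\binom{2m}{m-1-j}\binom{m-1+j}{m-1}=B_{m-1,j}, $$
after which $R_m(t)=k^{-m}\sum_{j=0}^{m-1}(-1)^jB_{m-1,j}k^{-j}=k^{-(2m-1)}\sum_{j=0}^{m-1}(-1)^jB_{m-1,j}k^{m-1-j}$, which is the claim. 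This identity can be established by a Vandermonde-type argument (use $\binom{m-1+\ell}{\ell}\binom{\ell}{j}=\binom{m-1+\ell}{j}\binom{m-1+\ell-j}{\ell-j}$ to reduce the left side to a summable hypergeometric series) or, equivalently, by reading off $\sum_\ell N_{m,\ell}q^\ell$ from the Catalan generating function.

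I expect the main obstacle to be this last step rather than the reduction or the path count, which are both clean: one has to be careful with the index ranges (the ballot numbers $N_{m,\ell}$ vanish for $\ell>m-1$, which is exactly what truncates the inner sum) and must match the full two-parameter family $B_{m-1,j}$ term by term, not merely its extreme coefficients (the leading and trailing terms are the Catalan numbers $C_m$ and $C_{m-1}$, a useful sanity check but not a proof). An alternative route, closer to the paper's direct style, is to integrate out $w_1,w_2,\dots$ successively inside $D(t)$: each $w_i$ enters only through $w_i^{-2}$, so the integral becomes a sum indexed by the set of coordinates where the upper bound $tz_i$ is active, leading to a recursion in $m$ whose solution is again $B_{m-1,j}$ — at the cost of the extended case analysis (the ``twelve lemmas'') that the superposition argument circumvents.
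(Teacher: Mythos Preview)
Your argument is correct and takes a genuinely different, much shorter, route than the paper. The paper never works with the limit object $R_m(t)$ directly: it starts from the finite-$n$ probabilities $\mathcal R_{n,m}(t)$, derives a tower of recursions for auxiliary quantities $r_{n,k}$, $R_{n,m}$, $Q_{n,m}$, $T_{n,m}$, computes their Laurent expansions in $1/n$, and extracts the $n^{-m}$ coefficient using combinatorics of Catalan's trapezoids (Lemmas \ref{3.12}--\ref{3.18} and Theorem \ref{th5}). Only at the very end does it pass from Catalan's triangle $C(m-1,\cdot)$ to the Borel triangle via the relation $B_{n,j}=\sum_{s\ge j}\binom{s}{j}C(n,s)$. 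Your approach bypasses all of the finite-$n$ analysis: the substitution $w_i=1/\Gamma_i$ identifies $g$ with the joint law of the first $m$ arrival times of a rate-$1$ Poisson process, Poisson superposition turns the constraints into a Bernoulli ballot problem with success probability $p=1/k$, and the reflection principle gives the path count $N_{m,\ell}=C(m-1,\ell)$ directly. What the paper buys with its heavier machinery is a description of the pre-limit coefficients $\floor{1/n^j}Q_{n,m}$, not just the limit; what your argument buys is a two-line probabilistic interpretation of $R_m(t)$ and a transparent reason for the Catalan structure.

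On the step you flag as the main obstacle: the identity
\[
\sum_{\ell=j}^{m-1}\frac{m-\ell}{m}\binom{m-1+\ell}{\ell}\binom{\ell}{j}=B_{m-1,j}
\]
is not a residual difficulty but exactly the relation $B_{n,j}=\sum_{s=j}^{n}\binom{s}{j}C(n,s)$ that the paper itself invokes (in the last proof of the appendix) as the definition of the Borel triangle, since your ballot numbers satisfy $N_{m,\ell}=\frac{m-\ell}{m}\binom{m-1+\ell}{\ell}=C(m-1,\ell)$ in the paper's notation. So once you reach $R_m(t)=p^{m}\sum_{\ell}N_{m,\ell}(1-p)^{\ell}$ and expand $(1-1/k)^{\ell}$, the proof is complete without any further Vandermonde manipulation.
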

Subsequently, we show the proof of this main theorem.
\begin{theorem}[Recursive formula for $f_{n}$]
$$f_{n}(t,x_{1}, x_{2},\cdots,x_{n}) = \sum_{i=1}^{n}\binom{n}{i}(-1)^{i+1}e^{-\frac{i}{t(x_{n-i+1})}}f_{n-i}(t,x_{1}, x_{2}...x_{n-1})$$
\end{theorem}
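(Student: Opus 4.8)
The plan is to prove the recursion by induction on $n$, working directly with the iterated integral that defines $f_{n}$ in the preceding part of the proof. The base cases are immediate: $f_{0}\equiv 1$, and $f_{1}(t,x_{1})=\int_{0}^{tx_{1}}e^{-1/w}w^{-2}\,dw=e^{-1/(tx_{1})}$ after the substitution $u=1/w$. Before the induction I would pass from the Fr\'echet coordinates $w_{i}$ to their reciprocals $v_{i}=1/w_{i}$: this sends the density factor $e^{-1/w_{n}}\prod_{i}w_{i}^{-2}$ to $e^{-v_{n}}$ (the Jacobian swallows the $w_{i}^{-2}$), the chain $w_{1}>\dots>w_{n}>0$ to $0<v_{1}<\dots<v_{n}$, and each cap $w_{i}<tx_{i}$ to a floor $v_{i}>c_{i}:=1/(tx_{i})$ with $c_{1}<\dots<c_{n}$ (because $x_{1}>\dots>x_{n}$). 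So it suffices to prove the recursion for $F_{n}(c_{1},\dots,c_{n})=\int_{0<v_{1}<\dots<v_{n},\ v_{i}>c_{i}}e^{-v_{n}}\,dv$, in which the exponent $e^{-i/(tx_{n-i+1})}$ becomes $e^{-ic_{n-i+1}}$ and $f_{n-i}$ evaluated at its first $n-i$ arguments is $F_{n-i}(c_{1},\dots,c_{n-i})$.

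For the inductive step I would peel off the largest integration variables one block at a time. The workhorse is the elementary identity $\int_{a<v_{1}<\dots<v_{j}}e^{-v_{j}}\,dv_{1}\cdots dv_{j}=e^{-a}$ (translate by $a$ and recognize a Gamma integral). Integrating out $v_{n}$ alone produces $e^{-\max(v_{n-1},c_{n})}$, and continuing down the chain one keeps running into maxima of the form ``previous variable vs.\ its own floor''; expanding each such $\exp(-\max_{k}y_{k})$ by inclusion--exclusion — equivalently, summing the signed contributions of the events ``the last $j$ variables are pinned below their floors'' — splits $F_{n}$ into pieces indexed by the size $i$ of the pinned block. In each piece the $i$ floors involved collapse to their smallest value $c_{n-i+1}$, giving the factor $e^{-ic_{n-i+1}}$; the attached sign is $(-1)^{i+1}$; the number of equivalent ways the collapse is counted is $\binom{n}{i}$; and the integral left over the $n-i$ outermost variables is, by construction, $F_{n-i}(c_{1},\dots,c_{n-i})$. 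Summing over $i$ yields the claimed recursion.

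The hard part will be the bookkeeping in this last step: showing that the signed expansion genuinely collapses, for each block size $i$, to the single term $\binom{n}{i}(-1)^{i+1}e^{-ic_{n-i+1}}F_{n-i}$ rather than to a messier subset-indexed sum, and that no residual constraint from the discarded floors survives inside $F_{n-i}$. I expect this to hinge on the monotonicity $c_{1}<\dots<c_{n}$ (so that among the floors of a pinned block only the smallest ever binds, and the binding constraint on the largest retained variable is always its own exponential tail), together with a short binomial identity used to merge the $\binom{n}{i}$ size-$i$ terms. Convergence of every intermediate integral is automatic, since $e^{-v_{n}}$ dominates all polynomial factors produced by the peeling. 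Solving the resulting recursion to reach the closed form with the ballot numbers $B_{m,j}$ is then a separate, more mechanical step (for instance a generating-function computation or a second induction on $m$).
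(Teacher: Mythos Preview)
Your proposal contains a genuine error in the setup that would make the argument fail. You take the integrand in the definition of $f_n$ to be the \emph{limiting} density $g(w_1,\ldots,w_n)=e^{-1/w_n}\prod_i w_i^{-2}$ on the chain $w_1>\cdots>w_n$. That is the density appearing in the definition of $R_m(t)$, not of $f_n$. In the paper $f_n$ is defined via the joint density $f_{Y_{(1)},\ldots,Y_{(n)}}$ of \emph{all} $n$ order statistics of $n$ i.i.d.\ Fr\'echet variables, namely $n!\prod_{i=1}^n y_i^{-2}e^{-1/y_i}$ on the increasing chain $y_1<\cdots<y_n$, with the increasing constraints $y_i<tx_i$ and $x_1<\cdots<x_n$. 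Consequently your reciprocal substitution does not produce $e^{-v_n}$ but $n!\,e^{-\sum_i v_i}$, and your ordering of the floors $c_i$ is reversed. A quick check at $n=2$ shows that your $F_2(c_1,c_2)=\int_{c_1<v_1<v_2,\,v_2>c_2}e^{-v_2}\,dv=(c_2-c_1+1)e^{-c_2}$ does not equal $2e^{-c_1-c_2}-e^{-2c_1}$, so the recursion you are trying to prove is simply false for the object you have written down.

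With the correct density the argument is much more direct than the inclusion--exclusion plan you sketch, and this is what the paper's one-line ``derived inductively'' is pointing at. Integrate the innermost variable first:
\[
\int_{y_{n-1}}^{tx_n} y_n^{-2}e^{-1/y_n}\,dy_n=e^{-1/(tx_n)}-e^{-1/y_{n-1}}.
\]
The first piece, multiplied by the remaining $n!\prod_{i<n}y_i^{-2}e^{-1/y_i}$ and integrated over the $(n-1)$-dimensional region, is exactly $n\,e^{-1/(tx_n)}f_{n-1}$. The second piece leaves the integrand $n!\prod_{i<n-1}y_i^{-2}e^{-1/y_i}\cdot y_{n-1}^{-2}e^{-2/y_{n-1}}$; integrating $y_{n-1}$ next gives a factor $\tfrac12\big(e^{-2/(tx_{n-1})}-e^{-2/y_{n-2}}\big)$, whose first part produces $-\binom{n}{2}e^{-2/(tx_{n-1})}f_{n-2}$, and so on. At step $i$ the accumulated coefficient is $(-1)^{i+1}n!/i!$, and splitting off $f_{n-i}$ contributes the normalization $1/(n-i)!$, giving precisely $(-1)^{i+1}\binom{n}{i}e^{-i/(tx_{n-i+1})}f_{n-i}$. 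There is no inclusion--exclusion and no subset bookkeeping: the binomial coefficient is $n!/(i!\,(n-i)!)$ arising mechanically from the $n!$ in the order-statistics density, the $1/i!=\prod_{j\le i}1/j$ from the successive $\int y^{-2}e^{-j/y}\,dy=\tfrac1j e^{-j/\,\cdot}$, and the $1/(n-i)!$ from recognizing $f_{n-i}$.
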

This formula is derived inductively by the following expansion:
$$
f_{n}(t,x_{1},\cdots,x_{n})=\int_{0}^{tx_{1}}\cdots\int_{y_{i}}^{tx_{i+1}}\cdots\int_{y_{n-1}}^{tx_{n}}f_{Y_{(1)},Y_{(2)},\cdots,Y_{(n)}} (y_1,y_2,\cdots,y_n)\mathrm{d}y_{n}\cdots\mathrm{d}y_{1}.
$$
\begin{ex}[Illustration of $f_{n}$]
Here are some examples of the recursive function $f_{n}(x)$ defined above.
\begin{align*}
&f_{0}(t)=1 \\
&f_{1}(t,x_{1})=e^{-\frac{1}{tx_{1}}}\\
&f_{2}(t,x_{1},x_{2})=2e^{-(\frac{1}{tx_{1}}+\frac{1}{tx_{2}})}-e^{-\frac{2}{tx_{1}}}\\
&f_{3}(t,x_{1},x_{2},x_{3})=(6e^{-(\frac{1}{tx_{1}}+\frac{1}{tx_{2}}+\frac{1}{tx_{3}})}-3e^{-(\frac{2}{tx_{1}}+\frac{1}{tx_{3}})})-3e^{-(\frac{1}{tx_{1}}+\frac{2}{tx_{2}})}+e^{-\frac{3}{tx_{1}}}\\
&f_{4}(t,x_{1},x_{2},x_{3},x_{4})=(24e^{-(\frac{1}{tx_{1}}+\frac{1}{tx_{2}}+\frac{1}{tx_{3}}+\frac{1}{tx_{4}})}-12e^{-(\frac{2}{tx_{1}}+\frac{1}{tx_{3}}+\frac{1}{tx_{4}})}-12e^{-(\frac{1}{tx_{1}}+\frac{2}{tx_{2}}+\frac{1}{tx_{4}})}+4e^{-(\frac{3}{tx_{1}}+\frac{1}{tx_{4}})})\\
&-(12e^{-(\frac{1}{tx_{1}}+\frac{1}{tx_{2}}+\frac{2}{tx_{3}})}-6e^{-\frac{2}{tx_{1}}+\frac{2}{tx_{3}}})+4e^{-(\frac{1}{tx_{1}}+\frac{3}{tx_{2}})}-e^{-\frac{4}{tx_{1}}}
\end{align*}
\end{ex}

\begin{lemma}[Integration formula]
$$
\int_{0}^{\infty}\int_{0}^{x_{n}}\cdots\int_{0}^{x_{2}} x_{1}^{-2}e^{-\frac{k_{1}}{x_{1}}}\cdots x_{n}^{-2}e^{-\frac{k_{n}}{x_{n}}}dx_{1}\cdots dx_{n}
=\frac{1}{k_{1}}\frac{1}{k_{1}+k_{2}}\cdots \frac{1}{k_{1}+k_{2}\cdots +k_{n}}
$$
\end{lemma}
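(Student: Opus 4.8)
The plan is to eliminate the $x_i^{-2}$ weights and the reciprocal exponents simultaneously by the substitution $u_i = 1/x_i$, which converts the whole integral into one of the exact shape already evaluated in Lemma \ref{int}. Indeed, for each $i$ one has $du_i = -x_i^{-2}\,dx_i$, so that $x_i^{-2}e^{-k_i/x_i}\,dx_i = -e^{-k_i u_i}\,du_i$; the pole-at-the-origin weight $x_i^{-2}$ is precisely the Jacobian needed to linearize the exponent. Under the same substitution the chain of constraints $0<x_1<x_2<\cdots<x_n<\infty$ is mapped, orientation-reversed, onto $\infty>u_1>u_2>\cdots>u_n>0$, because $t\mapsto 1/t$ is decreasing on $(0,\infty)$.

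First I would record that the integrand is nonnegative (each $k_i>0$, as in the hypotheses of Lemma \ref{int}), so Tonelli's theorem licenses iterated evaluation in any order and there is no convergence issue to police beyond positivity of the running partial sums $k_1+\cdots+k_j$. Carrying out the substitution factor by factor turns the claimed quantity into
\begin{equation*}
\int_{u_1>u_2>\cdots>u_n>0} e^{-k_1u_1-k_2u_2-\cdots-k_nu_n}\,du_1\cdots du_n .
\end{equation*}
This is exactly the integral of Lemma \ref{int} after the order-reversing relabeling $y_j=u_{n+1-j}$, $c_j=k_{n+1-j}$: the region becomes $0<y_1<\cdots<y_n$, and the quoted value $1/\big(c_n(c_n+c_{n-1})\cdots(c_n+\cdots+c_1)\big)$ becomes $1/\big(k_1(k_1+k_2)\cdots(k_1+\cdots+k_n)\big)$, which is the asserted product.

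Alternatively, and perhaps more transparently, I would integrate directly from the outside in: integrating the largest variable $u_1$ over $(u_2,\infty)$ yields $\tfrac{1}{k_1}e^{-k_1u_2}$, which merges with $e^{-k_2u_2}$ into $e^{-(k_1+k_2)u_2}$; integrating $u_2$ over $(u_3,\infty)$ then gives $\tfrac{1}{k_1}\tfrac{1}{k_1+k_2}e^{-(k_1+k_2)u_3}$, and so forth. This telescoping produces the factor $1/(k_1+\cdots+k_j)$ at the $j$-th stage, and the final integration of $u_n$ over $(0,\infty)$ supplies $1/(k_1+\cdots+k_n)$. The only genuine bookkeeping hazard is the order reversal induced by the inversion, so that it is the left-anchored partial sums $k_1+\cdots+k_j$, rather than $k_j+\cdots+k_n$, that appear; once the indices are tracked carefully the identity is immediate. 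I do not expect a substantive obstacle here — the statement is essentially a change-of-variables identity — so the main care goes into confirming the direction of the inequalities after inversion and the positivity of each partial sum guaranteeing convergence.
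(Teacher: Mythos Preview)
Your argument is correct: the inversion $u_i=1/x_i$ has Jacobian exactly $x_i^{-2}$, linearizes the exponents, and reverses the ordering so that the integral becomes $\int_{u_1>\cdots>u_n>0} e^{-\sum k_i u_i}\,du$, which is the integral of Lemma~\ref{int} with $c_j=k_{n+1-j}$; the telescoping evaluation you give is also valid and yields the stated product.

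The paper itself states this lemma without proof, treating it as a routine computation parallel to Lemma~\ref{int} (which is likewise asserted rather than proved). Your reduction to Lemma~\ref{int} via the change of variables is the natural argument and is almost certainly what the authors had in mind, so there is nothing to contrast here beyond noting that you have supplied the details the paper omits.
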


\begin{definition}[The right tail asymptotic distribution]
\[
\mathcal{R}_{n,m}(t)
= P_{n}\!\left( \max_{\,n-m+1\le i \le n} \frac{Y_{(i)}}{X_{(i)}} \le t \right).
\]

Recalling the fundamental formulation, note that for $i\le n-m$ we have the upper bound $Y_{(i)}\le t\,x_{n-m+1}$; hence we set the corresponding $x_i$’s equal to $x_{n-m+1}$. We can therefore rewrite the expression as
\begin{align*}
\mathcal{R}_{n,m}(t)
&= \int_{0< x_{n-m+1}\le x_{n-m+2}\le \cdots \le x_{n}}
r_{n,m}\!\big(t, x_{n-m+1},\ldots,x_{n}\big)\,
P\!\left( X_{(i)}=x_{i}\ \forall\, n-m+1\le i\le n \right)
\,\mathrm{d}x_{n-m+1}\cdots \mathrm{d}x_n\\[4pt]
&= \int_{0< x_{n-m+1}\le x_{n-m+2}\le \cdots \le x_{n}}
r_{n,m}\!\big(t, x_{n-m+1},\ldots,x_{n}\big)\,
\frac{n!}{(n-m)!}
e^{-\frac{n-m}{x_{n-m+1}}}
\prod_{j=n-m+1}^{n} x_{j}^{-2}e^{-\frac{1}{x_{j}}}
\mathrm{d}x_{n-m+1}\cdots \mathrm{d}x_n,
\end{align*}
where
\begin{align*}
&r_{n,m}\!\big(t, x_{n-m+1},\ldots,x_{n}\big)\\
&= P\!\Big( Y_{(1)}\le t x_{n-m+1},\,\ldots,\,
Y_{(n-m+1)}\le t x_{n-m+1},\,\ldots,\,
Y_{(n)}\le t x_{n}
\,\Big|\,
X_{(n-m+1)}=x_{n-m+1},\ldots,X_{(n)}=x_{n}\Big)\\[4pt]
&= \int_{0}^{t x_{n-m+1}}
\int_{y_1}^{t x_{n-m+1}}
\cdots
\int_{y_{n-m-1}}^{t x_{n-m+1}}
\int_{y_{n-m}}^{t x_{n-m+1}}
\int_{y_{n-m+1}}^{t x_{n-m+2}}
\cdots
\int_{y_{n-1}}^{t x_{n}}
f_{Y_{(1)},\ldots,Y_{(n)}}(y_1,\ldots,y_n)\,
\mathrm{d}y_n\cdots \mathrm{d}y_1.
\end{align*}
\end{definition}

\begin{lemma}[Recursive formula for $r_{n,k}$]\label{r_{n,k}}
%$\newline$
For $k<n$,
$$
r_{n,k}(t)=\left(\sum_{i=1}^{k-1}\binom{n}{i}(-1)^{i+1}r_{n-i,k-i}e^{-\frac{i}{tx_{n-i+1}}}\right)+(-1)^{k+1}\binom{n-1}{k-1}e^{-\frac{n}{tx_{n-k+1}}}.
$$
Otherwise, $r_{n,n}(t)=f_{n}(t)$
\end{lemma}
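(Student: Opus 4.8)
The plan is to peel off the largest order statistics $y_n,y_{n-1},\dots$ one at a time from the nested integral defining $r_{n,k}(t)$, using the elementary antiderivative $\int_a^b y^{-2}e^{-w/y}\,dy=\frac1w\bigl(e^{-w/b}-e^{-w/a}\bigr)$, while tracking how the exponent weight of the current ``top'' variable grows. Substituting the Fr\'echet order-statistic density $f_{Y_{(1)},\dots,Y_{(n)}}(y_1,\dots,y_n)=n!\prod_{j=1}^{n}y_j^{-2}e^{-1/y_j}$ (valid on $0<y_1<\cdots<y_n$) into the definition, $r_{n,k}(t)$ is $n!$ times the nested integral of $\prod_{j}y_j^{-2}e^{-1/y_j}$ over the region there. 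Before peeling I would record two facts: (a) $\int_0^c y^{-2}e^{-w/y}\,dy=\frac1w e^{-w/c}$; and (b) by induction on $m$, $\int_{0<y_1<\cdots<y_m<y}\prod_{j=1}^m y_j^{-2}e^{-1/y_j}\,dy=\frac{1}{m!}e^{-m/y}$, which combine to give
\[
\int_{0<y_1<\cdots<y_p<c} y_p^{-2}e^{-w/y_p}\prod_{j=1}^{p-1}y_j^{-2}e^{-1/y_j}\,dy=\frac{1}{(p-1)!\,(w+p-1)}\,e^{-(w+p-1)/c}.
\]

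The heart of the argument is the peeling step. Write $c=tx_{n-k+1}$; in the integral for $r_{n,k}$ the variables $y_n,\dots,y_{n-k+2}$ carry the decreasing upper limits $tx_n,\dots,tx_{n-k+2}$, while $y_1,\dots,y_{n-k+1}$ all have upper limit $c$. Integrate out $y_n$, then $y_{n-1}$, and so on. At the $j$-th step ($1\le j\le k-1$) the current top variable is $y_{n-j+1}$, whose exponent has accumulated weight $j$ (the weight grows by one each time a ``carry'' term is kept), and (a) gives
\[
\int_{y_{n-j}}^{tx_{n-j+1}} y_{n-j+1}^{-2}e^{-j/y_{n-j+1}}\,dy_{n-j+1}=\frac1j\Bigl(e^{-j/(tx_{n-j+1})}-e^{-j/y_{n-j}}\Bigr).
\]
The first (boundary) term factors out $e^{-j/(tx_{n-j+1})}$ and leaves the integral of $\prod_{l=1}^{n-j}y_l^{-2}e^{-1/y_l}$ over $0<y_1<\cdots<y_{n-j}$ with $y_l$ capped at $tx_l$ for $n-k+2\le l\le n-j$ and at $c$ for $l\le n-k+1$; since $(n-j)-(k-j)+1=n-k+1$, this is exactly the region defining $r_{n-j,\,k-j}$ at $(t,x_{n-k+1},\dots,x_{n-j})$, so the boundary term is $\frac{1}{(n-j)!}r_{n-j,k-j}$ times the running constant. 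The second (carry) term replaces $y_{n-j}^{-2}e^{-1/y_{n-j}}$ by $-\frac1j\,y_{n-j}^{-2}e^{-(j+1)/y_{n-j}}$, advancing to step $j+1$ after multiplying the running constant by $-1/j$. Hence the running constant entering step $j$ is $(-1)^{j-1}/(j-1)!$, and the boundary term of step $j$ contributes to $r_{n,k}(t)$
\[
n!\cdot\frac{(-1)^{j-1}}{(j-1)!}\cdot\frac1j\,e^{-j/(tx_{n-j+1})}\cdot\frac{1}{(n-j)!}\,r_{n-j,k-j}=\binom{n}{j}(-1)^{j+1}e^{-j/(tx_{n-j+1})}\,r_{n-j,k-j},
\]
which is exactly the $j$-th summand of the assertion.

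After the $(k-1)$-st step the only remaining variables are $y_1<\cdots<y_{n-k+1}$, all capped at $c$, with $y_{n-k+1}$ carrying weight $k$; by the closed form above with $p=n-k+1$ and $w=k$ this residual integral equals $\frac{1}{(n-k)!\,n}e^{-n/c}$, and multiplying by $n!$ and the running constant $(-1)^{k-1}/(k-1)!$ yields
\[
n!\cdot\frac{(-1)^{k-1}}{(k-1)!}\cdot\frac{1}{(n-k)!\,n}\,e^{-n/(tx_{n-k+1})}=(-1)^{k+1}\binom{n-1}{k-1}e^{-n/(tx_{n-k+1})},
\]
the last term of the recursion. Summing the $k-1$ boundary contributions and this residual term proves the formula for $k<n$. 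For $k=n$, the region defining $r_{n,n}$ is $\{0<y_1<tx_1,\ y_{j-1}<y_j<tx_j\ (2\le j\le n)\}$, which is verbatim the region in the definition of $f_n(t)$; hence $r_{n,n}(t)=f_n(t)$ (equivalently, running the same peeling to the end with $p=1$ reproduces the recursion for $f_n$).

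The main obstacle will be the bookkeeping: confirming that the exponent weight on the $j$-th top variable is precisely $j$, that the running sign-and-factorial constant is $(-1)^{j-1}/(j-1)!$, and --- most importantly --- that the integral surviving each boundary split literally matches the definition of $r_{n-j,k-j}$ with the shifted argument list $(x_{n-k+1},\dots,x_{n-j})$ and the unchanged common cap $c=tx_{n-k+1}$, rather than merely resembling it. Once these identifications are in place, collecting the binomial coefficients $\binom nj$, $\binom{n-1}{k-1}$ and the alternating signs is routine.
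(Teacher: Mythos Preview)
Your proposal is correct and follows essentially the same peeling-off-the-top approach that the paper uses (implicitly) for the companion recursion for $f_n$; the paper states this lemma without writing out a proof, but your explicit bookkeeping of the accumulating exponent weight $j$, the running constant $(-1)^{j-1}/(j-1)!$, and the identification of the surviving integral with $r_{n-j,k-j}/(n-j)!$ is exactly the computation needed. The only point worth double-checking in the write-up is the match of argument lists: after peeling $j$ variables the common cap remains $c=tx_{n-k+1}$ and the distinguished caps are $tx_{n-k+2},\dots,tx_{n-j}$, so the surviving integral is $r_{n-j,k-j}$ evaluated at $(t,x_{(n-j)-(k-j)+1},\dots,x_{n-j})=(t,x_{n-k+1},\dots,x_{n-j})$, which you state correctly.
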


\begin{lemma}[Recursive formula for $\mathcal{R}_{n,m}$]
%$\newline$
For $m<n$,
$$
\mathcal{R}_{n,m}(t)=\sum_{j=1}^{m-1}(-1)^{j+1}\binom{n}{j}\mathcal{R}_{n-j,m-j}\prod_{i=0}^{j-1}\frac{n-i}{nk-i}+(-1)^{m+1}\binom{n-1}{m-1}\prod_{q=0}^{m-1}\frac{n-q}{nk-q},k=1+\frac{1}{t},
$$
with base case
$\mathcal{R}_{n,1}(t)=\frac{1}{k}$.
\end{lemma}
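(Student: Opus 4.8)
The plan is to feed the recursion for $r_{n,m}$ from Lemma~\ref{r_{n,k}} into the integral defining $\mathcal{R}_{n,m}(t)$ and to evaluate every resulting integral in closed form by the integration-formula lemma. Write the joint density of the top $m$ order statistics of $n$ i.i.d.\ unit Fr\'echet variables as $\rho_{n,m}(x)=\frac{n!}{(n-m)!}e^{-(n-m)/x_{n-m+1}}\prod_{l=n-m+1}^{n}x_l^{-2}e^{-1/x_l}$ on $\{0<x_{n-m+1}<\cdots<x_n\}$, so that $\mathcal{R}_{n,m}(t)=\int r_{n,m}(t,\cdot)\,\rho_{n,m}\,dx$. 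Substituting Lemma~\ref{r_{n,k}} and using linearity, $\mathcal{R}_{n,m}(t)=\sum_{j=1}^{m-1}(-1)^{j+1}\binom{n}{j}A_j+(-1)^{m+1}\binom{n-1}{m-1}B$, where $A_j=\int r_{n-j,m-j}(t,\cdot)\,e^{-j/(tx_{n-j+1})}\rho_{n,m}\,dx$ and $B=\int e^{-n/(tx_{n-m+1})}\rho_{n,m}\,dx$. It therefore suffices to prove $A_j=\mathcal{R}_{n-j,m-j}(t)\prod_{i=0}^{j-1}\frac{n-i}{nk-i}$, $B=\prod_{q=0}^{m-1}\frac{n-q}{nk-q}$, and the base case $\mathcal{R}_{n,1}(t)=1/k$; the last is one line: with $F(x)=e^{-1/x}$, $f(x)=x^{-2}e^{-1/x}$, one has $\mathcal{R}_{n,1}(t)=\int_0^\infty[F(tx)]^n\,n[F(x)]^{n-1}f(x)\,dx=n\int_0^\infty x^{-2}e^{-nk/x}\,dx=1/k$, using $\int_0^\infty x^{-2}e^{-c/x}dx=1/c$ and $k=1+1/t$.

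The second ingredient is a structural fact, immediate by induction on $m$ from Lemma~\ref{r_{n,k}} and visible in the displayed expansions of $f_n$: $r_{n-j,m-j}(t,x_{n-m+1},\ldots,x_{n-j})$ is a finite signed sum $\sum_\sigma c_\sigma\prod_{l=n-m+1}^{n-j}e^{-a_{\sigma,l}/(tx_l)}$ with nonnegative integer exponents obeying the invariant $\sum_{l=n-m+1}^{n-j}a_{\sigma,l}=n-j$. Inserting this into $A_j$, each $\sigma$-summand acquires an integrand of the pure product form $\prod_{l=n-m+1}^{n}x_l^{-2}e^{-c_{\sigma,l}/x_l}$ over the increasing simplex, where $c_{\sigma,n-m+1}=a_{\sigma,n-m+1}/t+(n-m)+1$, $c_{\sigma,l}=a_{\sigma,l}/t+1$ for $n-m+1<l\le n-j$, $c_{\sigma,n-j+1}=j/t+1$, and $c_{\sigma,l}=1$ for $n-j+1<l\le n$. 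By the integration-formula lemma this $\sigma$-summand integrates to $\prod_{p=1}^{m}S_{\sigma,p}^{-1}$, $S_{\sigma,p}=\sum_{l=n-m+1}^{n-m+p}c_{\sigma,l}$; the same lemma evaluates $\mathcal{R}_{n-j,m-j}(t)$ as $\frac{(n-j)!}{(n-m)!}\sum_\sigma c_\sigma\prod_{p=1}^{m-j}S_{\sigma,p}^{-1}$, and because the c.d.f.\ exponent of $\rho_{n-j,m-j}$ is also $(n-j)-(m-j)=n-m$, the first $m-j$ partial sums there are literally the same $S_{\sigma,p}$.

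The crux is that the top $j$ partial sums collapse to values independent of $\sigma$. Since $c_{\sigma,l}=1$ for the $j-1$ indices $l>n-j+1$ and $c_{\sigma,n-j+1}=j/t+1$, a short count using $\sum_l a_{\sigma,l}=n-j$ gives $S_{\sigma,m}=n/t+n=nk$ (the $1/t$-parts sum to $((n-j)+j)/t=n/t$ and the constant parts to $(n-m)+1+(m-j-1)+1+(j-1)=n$), and peeling off the top $q\le j-1$ coefficients — each equal to $1$ — gives $S_{\sigma,m-q}=nk-q$. Hence every $\sigma$-summand of $A_j$ factors as $\big(\prod_{p=1}^{m-j}S_{\sigma,p}^{-1}\big)\prod_{q=0}^{j-1}(nk-q)^{-1}$, and summing over $\sigma$ while splitting $\frac{n!}{(n-m)!}=\frac{n!}{(n-j)!}\cdot\frac{(n-j)!}{(n-m)!}$ yields $A_j=\frac{n!}{(n-j)!}\big(\prod_{q=0}^{j-1}(nk-q)^{-1}\big)\mathcal{R}_{n-j,m-j}(t)=\big(\prod_{i=0}^{j-1}\frac{n-i}{nk-i}\big)\mathcal{R}_{n-j,m-j}(t)$. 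The term $B$ is the same computation in the degenerate case $j=m$ (no $r$-factor, empty $\sigma$-sum): all $m$ partial sums collapse, with $c_{n-m+1}=n/t+(n-m)+1=nk-m+1$ and the rest $1$, so $B=\frac{n!}{(n-m)!}\prod_{p=1}^{m}(nk-m+p)^{-1}=\prod_{q=0}^{m-1}\frac{n-q}{nk-q}$. Reassembling the three pieces reproduces the claimed recursion.

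The main obstacle is precisely this last bookkeeping — verifying that $S_{\sigma,m},S_{\sigma,m-1},\ldots,S_{\sigma,m-j+1}$ are independent of the summation label $\sigma$ and equal $nk,nk-1,\ldots,nk-j+1$; the exponent-sum invariant $\sum_l a_{\sigma,l}=n-j$ together with the count of the constant $1$'s contributed by the Fr\'echet density does all the work, after which everything is linearity plus the integration-formula lemma. A routine point to record in passing is that every coefficient $c_{\sigma,l}$ is strictly positive (each is at least $1$), so all integrals converge absolutely and the finite interchanges of summation and integration are legitimate.
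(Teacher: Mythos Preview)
Your proof is correct and follows exactly the approach the paper indicates: plug the recursion for $r_{n,m}$ from Lemma~\ref{r_{n,k}} into the integral defining $\mathcal{R}_{n,m}$, then evaluate each piece with the integration-formula lemma using the substitution $k=1+1/t$. The paper's own argument is a single sentence stating that the recursion ``follows directly'' from these two ingredients, so you have supplied the computation the paper omits; in particular, your identification of the exponent-sum invariant $\sum_l a_{\sigma,l}=n-j$ and the resulting collapse of the top $j$ partial sums to $nk,\,nk-1,\ldots,nk-j+1$ is precisely the mechanism behind the product $\prod_{i=0}^{j-1}\frac{n-i}{nk-i}$.
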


This recursive formula for $\mathcal{R}_{n,m}$ follows directly from the formulation of $r_{n,m}$ in Lemma \ref{r_{n,k}}. The important trick is to substitute $t$ with variable $k=1+1/t$. Note that $\lim_{n\to \infty}\mathcal{R}_{n,m}(t)\in [0,1]$, and $\frac{n!}{(n-m)!}=\mathcal{O}(n^m)$. Next, we will only consider the part of the $\mathcal{R}_{n,m}(t)$ excluding $\frac{n!}{(n-m)!}$, i.e. we will study this function:
$$
R_{n,m}(t)=\mathcal{R}_{n,m}(t)/\frac{n!}{(n-m)!}
$$
\begin{theorem}[Recursive formula for ${R}_{n,m}$]
%$\newline$
 For $m<n$,
$$
R_{n,m}(t)=\left(\sum_{j=1}^{m-1}(-1)^{j+1}\frac{R_{n-j,m-j}(t)}{j!}\prod_{i=0}^{j-1}\frac{n-i}{nk-i}\right)+(-1)^{m+1}\frac{1}{(m-1)!n}\prod_{q=0}^{m-1}\frac{n-q}{nk-q},k=1+1/t,
$$
with base case $R_{n,1}(t)=\frac{1}{nk}$.
\end{theorem}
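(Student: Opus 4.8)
The plan is to derive the stated recursion for $R_{n,m}$ by a purely algebraic manipulation of the already-established recursive formula for $\mathcal{R}_{n,m}$ (the Lemma ``Recursive formula for $\mathcal{R}_{n,m}$''), combined with the definition $R_{n,m}(t)=\mathcal{R}_{n,m}(t)\big/\tfrac{n!}{(n-m)!}$. No new analysis is needed; the entire content is tracking factorials and checking that the normalizing constant is compatible with the recursion.

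First I would record the key bookkeeping identity. In the term $\binom{n}{j}\mathcal{R}_{n-j,m-j}$ appearing in the $\mathcal{R}$-recursion, the subordinate object $\mathcal{R}_{n-j,m-j}$ carries normalization $\tfrac{(n-j)!}{\big((n-j)-(m-j)\big)!}=\tfrac{(n-j)!}{(n-m)!}$, so $\mathcal{R}_{n-j,m-j}=R_{n-j,m-j}\cdot\tfrac{(n-j)!}{(n-m)!}$. Dividing the whole $j$-th summand by $\tfrac{n!}{(n-m)!}$ then gives
$$
\binom{n}{j}\,\frac{(n-j)!}{(n-m)!}\Big/\frac{n!}{(n-m)!}\;=\;\frac{n!}{j!\,(n-j)!}\cdot\frac{(n-j)!}{n!}\;=\;\frac{1}{j!},
$$
so the $j$-th term becomes $(-1)^{j+1}\tfrac{R_{n-j,m-j}(t)}{j!}\prod_{i=0}^{j-1}\tfrac{n-i}{nk-i}$, the product factor being untouched since it does not involve the normalization. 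For the boundary term I would likewise compute
$$
\binom{n-1}{m-1}\Big/\frac{n!}{(n-m)!}\;=\;\frac{(n-1)!}{(m-1)!\,(n-m)!}\cdot\frac{(n-m)!}{n!}\;=\;\frac{1}{(m-1)!\,n},
$$
which yields $(-1)^{m+1}\tfrac{1}{(m-1)!\,n}\prod_{q=0}^{m-1}\tfrac{n-q}{nk-q}$. Adding the two pieces reproduces the claimed formula.

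It then remains to check the base case: $R_{n,1}(t)=\mathcal{R}_{n,1}(t)\big/\tfrac{n!}{(n-1)!}=\tfrac{1}{k}\big/n=\tfrac{1}{nk}$, using the base case $\mathcal{R}_{n,1}(t)=\tfrac1k$ from the preceding lemma. One should also note that the recursion is well-founded: each step decreases $m$ by at least one while preserving $m<n$ and terminates at $m=1$, so the formula determines $R_{n,m}$ uniquely.

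The step most prone to error — and there is no genuine analytic obstacle — is the index bookkeeping: one must verify that the denominator of the normalizing falling factorial is $(n-m)!$ at every level of the recursion (not $(n-j-m)!$ or similar), and that the substitution $k=1+1/t$ is applied consistently so that the $\prod$ factors match verbatim on both sides. If one wanted a fully self-contained argument one would additionally re-derive the Lemma ``Recursive formula for $\mathcal{R}_{n,m}$'' from the recursion for $r_{n,k}$ in Lemma \ref{r_{n,k}}, but since that lemma is stated above it may be invoked directly.
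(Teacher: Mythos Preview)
Your proposal is correct and follows exactly the route the paper intends: the theorem is obtained by dividing the recursion for $\mathcal{R}_{n,m}$ termwise by the falling factorial $\frac{n!}{(n-m)!}$, and your bookkeeping of the binomial coefficients and the base case is accurate. The paper offers no separate argument beyond this implicit division, so you have simply made explicit what the paper leaves to the reader.
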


Consequently, $R_{n,m}(t)=\mathcal{O}(n^{-m})$.
In the next theorem, we perform an alternative expression for
$\lim_{n\to\infty} R_{n,m}(t)$, which will be used to derive the asymptotic distribution.

Note that for finite real number $b$,
$$
\lim_{n\to \infty}R_{n,m}(t)=\lim_{n\to \infty}R_{n+b,m}(t)
$$
By replacing $n$ with $n+m-1$, we instead study functions $Q_{n,m}(t)=R_{n+m-1,m}(t).$ For simplicity, we omit the $(t)$ when writing.
\begin{theorem}\label{Q}
By induction we can rewrite $Q_{n,m}$ as follows.
\begin{align*}
   Q_{n,m}&=\frac{1}{k}\left(Q_{n,m-1}-\frac{1}{2k}g_{1,m-1}\cdot\left(Q_{n,m-2}-\frac{1}{3k}g_{2,m-1}\cdot(\cdots \frac{1}{(m-2)k}g_{m-3,m-1}\cdot\left(Q_{n,2}\right.\right.\right.\\
   &\left.\left.\left.-\frac{1}{(m-1)k}g_{m-2,m-1}\frac{1}{k}f_{m-1}\right)\right)\right),
\end{align*}
where
$$
f_a=\frac{1}{n}-\frac{n}{(n-a/k+a)(n+a)}
$$
$$
g_{a-b,a}=\frac{n+b}{n-\frac{a-b}{k}+a}
$$
$$Q_{n,2}=\frac{f_1}{k^2}$$
\end{theorem}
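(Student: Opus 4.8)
The plan is to derive the nested (Horner–type) formula directly from the recursion for $R_{n,m}$ proved above, together with the substitution $Q_{n,m}(t)=R_{n+m-1,m}(t)$; the nesting itself will be unfolded layer by layer, i.e.\ by induction on the nesting depth. The first step is to turn the $R$-recursion into a recursion for $Q$ with a stable first index. Replacing $n$ by $n+m-1$ in the formula for $R_{n,m}$ and using $R_{(n+m-1)-j,\,m-j}=R_{n+(m-j)-1,\,m-j}=Q_{n,m-j}$, which freezes the first subscript at $n$ throughout, one gets, writing $N:=n+m-1$,
\[
Q_{n,m}=\sum_{j=1}^{m-1}(-1)^{j+1}\frac{Q_{n,m-j}}{j!}\prod_{i=0}^{j-1}\frac{N-i}{Nk-i}\;+\;(-1)^{m+1}\frac{1}{(m-1)!\,N}\prod_{q=0}^{m-1}\frac{N-q}{Nk-q},
\]
together with $Q_{n,1}=R_{n,1}=\tfrac{1}{nk}$. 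Everything then reduces to showing that the right-hand side of the claimed nested expression, when multiplied out, reproduces exactly this flat recursion.

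Denote by $c_{j,m}:=(-1)^{j+1}\tfrac{1}{j!}\prod_{i=0}^{j-1}\tfrac{N-i}{Nk-i}$ the coefficient of $Q_{n,m-j}$ above, and note $c_{1,m}=1/k$. The heart of the matter is the elementary rewriting
\[
g_{\ell,m-1}=\frac{N-\ell}{N-\ell/k}=\frac{k(N-\ell)}{Nk-\ell},
\qquad\text{whence}\qquad
\frac{1}{k^{j}}\prod_{\ell=1}^{j-1}g_{\ell,m-1}=\prod_{i=0}^{j-1}\frac{N-i}{Nk-i}.
\]
This says precisely that the ratio of consecutive coefficients is $c_{j+1,m}/c_{j,m}=-g_{j,m-1}/((j+1)k)$, i.e.\ exactly the prefactor that appears one level deeper in the nested formula. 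Consequently the partial sum $\sum_{j=1}^{m-2}c_{j,m}Q_{n,m-j}$ telescopes, one layer at a time, into $\tfrac1k\big(Q_{n,m-1}-\tfrac{1}{2k}g_{1,m-1}\big(Q_{n,m-2}-\cdots-\tfrac{1}{(m-2)k}g_{m-3,m-1}\,Q_{n,2}\big)\big)$, which is the claimed nested expression truncated at the $Q_{n,2}$ level; alternatively, one computes the coefficient of each $Q_{n,m-j}$ in the nested expression as a product of prefactors and matches it against $c_{j,m}$ via the displayed identity.

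It remains to handle the tail: I must show that the two leftover pieces of the flat recursion --- the $j=m-1$ term $c_{m-1,m}Q_{n,1}$ and the constant term $d_m:=(-1)^{m+1}\tfrac{1}{(m-1)!\,N}\prod_{q=0}^{m-1}\tfrac{N-q}{Nk-q}$ --- combine into the single innermost summand of the nested formula, the one carrying $f_{m-1}$. Using $N-(m-1)=n$ one checks $d_m=-c_{m-1,m}\cdot\tfrac{n}{N(Nk-(m-1))}$, so, after dividing by $c_{m-1,m}$, the required equality reduces to the scalar identity $\tfrac1k f_{m-1}=\tfrac{1}{nk}-\tfrac{n}{N(Nk-(m-1))}$; and this is nothing but the definition of $f_{m-1}$ once one rewrites $n-\tfrac{m-1}{k}+(m-1)=\tfrac{Nk-(m-1)}{k}$. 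The base case of the induction is $m=2$, where the $R$-recursion gives directly $Q_{n,2}=R_{n+1,2}=\tfrac1k\cdot\tfrac1k f_1=f_1/k^2$; the case $m=3$ serves as a useful hand-check.

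The conceptual content here is light; the real work --- and where I expect the main difficulty --- is purely bookkeeping: keeping the alternating signs, the factorials, the powers of $k$, and, most delicately, the shifting second index $m-1$ on the factors $g_{\ell,m-1}$ correctly aligned while the nesting is peeled open, and then pinning down the tail collapse, namely that $c_{m-1,m}Q_{n,1}+d_m$ is exactly (explicit prefactor)$\times f_{m-1}$ rather than some unrelated rational function of $n$ and $k$. Isolating the two identities above once and for all --- the coefficient-ratio identity $c_{j+1,m}/c_{j,m}=-g_{j,m-1}/((j+1)k)$ and the $f_{m-1}$ identity --- is what makes the argument close cleanly.
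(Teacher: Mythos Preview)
Your proposal is correct and is essentially the proof the paper leaves implicit: the paper merely states that the formula follows ``by induction'' from the $R_{n,m}$-recursion and the substitution $Q_{n,m}=R_{n+m-1,m}$, without spelling out the details. Your two key identities --- writing $g_{\ell,m-1}=k(N-\ell)/(Nk-\ell)$ with $N=n+m-1$ to get the coefficient-ratio formula $c_{j+1,m}/c_{j,m}=-g_{j,m-1}/((j+1)k)$, and collapsing the tail $c_{m-1,m}Q_{n,1}+d_m$ into $c_{m-1,m}\cdot\tfrac{1}{k}f_{m-1}$ via $n-(m-1)/k+(m-1)=(Nk-(m-1))/k$ --- are exactly what is needed, and both check out line by line.
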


\begin{ex}
\begin{align*}
&Q_{n,3}=\frac{1}{k}Q_{n,2}-\frac{1}{2k^3}g_{1,2}\cdot f_2=\frac{1}{k^3}f_1-\frac{1}{2k^3}g_{1,2}\cdot f_2  \\
&Q_{n,4}=\frac{1}{k}Q_{n,3}-\frac{1}{2k^4}g_{1,3}\cdot(f_1-\frac{1}{3}g_{2,3}\cdot f_3)\\
&Q_{n,5}=\frac{1}{k}Q_{n,4}-\frac{1}{2k^2}g_{1,4}\cdot(Q_{n,3}-\frac{1}{3k^3}g_{2,4}\cdot (f_1-\frac{1}{4}g_{3,4}f_4))
\end{align*}
\end{ex}
In the next part, we will use Laurent Expansion to find the limit distribution $\lim_{n\to \infty}R_{n,m}(t)$.
\begin{lemma}[Laurent Expansion of $f_a$ and $g_{a-b,a}$]\label{3.12}
$$s=1-\frac{1}{k}$$
$$
f_1(t)=\frac{1}{n}-\frac{n}{(n-1/k+1)(n+1)}
$$

\begin{center}
\begin{tabular}{ c c  }
 $1$ & $a_0=0$\\
 $\frac{1}{n}$ & $a_1=0$ \\
 $\frac{1}{n^2}$ & $a_2=1+s$ \\
 $\frac{1}{n^3}$ & $a_3=-(1+s+s^2)$ \\
 ...&...\\
 $\frac{1}{n^j}$ & $a_j=(-1)^j(1+s...s^{j-1})$ \\
\end{tabular}
\end{center}
$\newline$
$$
f_a(t)=\frac{1}{n}-\frac{n}{(n-a/k+a)(n+a)}
$$
\begin{center}
\begin{tabular}{ c c  }
 $1$ & $0$\\
 $\frac{1}{n}$ & $0$ \\
 $\frac{1}{n^2}$ & $a\cdot a_2$ \\
 $\frac{1}{n^3}$ & $a^2 \cdot a_3$ \\
 ...&...\\
 $\frac{1}{n^j}$ & $a^{j-1}\cdot a_j$ \\
\end{tabular}
\end{center}

$\newline$
$$
g_{a-b,a}(t)=\frac{n+b}{n-\frac{a-b}{k}+a}
$$

\begin{center}
\begin{tabular}{ c c  }
 $1$ & $1$\\
 $\frac{1}{n}$ & $-s(a-b)$ \\
 $\frac{1}{n^2}$ & $\frac{s}{k}(a-b)(ak-a+b)$ \\
 $\frac{1}{n^3}$ & $\frac{-s}{k^2}(a-b)(ak-a+b)^2$ \\
 ...&...\\
 $\frac{1}{n^j}$ & $\frac{(-1)^js}{k^{j-1}}(a-b)(ak-a+b)^{j-1}$ \\
\end{tabular}
\end{center}
Note that products of $f_a$ and $g_{a-b,a}$ are well-defined because their Laurent Expansion have coefficient $0$ for all $n^i,i \geq 1$.
\end{lemma}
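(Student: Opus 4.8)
The plan is to prove the stated expansions by treating $f_a$ and $g_{a-b,a}$ as rational functions of $n$ with $t$ (hence $k$ and $s=1-1/k$), $a$, and $b$ held fixed, observing that each is analytic at $n=\infty$ (it has a finite limit there), and reading off its Laurent series by a partial‑fraction decomposition followed by the geometric expansion $\tfrac{1}{n+c}=\sum_{j\ge 1}(-c)^{j-1}n^{-j}$, valid for large $n$. The only algebraic identities needed to put the denominators in convenient form are $a-\tfrac{a}{k}=as$ and $a-\tfrac{a-b}{k}=\tfrac{ak-a+b}{k}$, both immediate from $s=1-1/k$.

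For $g_{a-b,a}$ I would first rewrite $g_{a-b,a}=\dfrac{n+b}{\,n+\tfrac{ak-a+b}{k}\,}=1+\dfrac{\,b-\tfrac{ak-a+b}{k}\,}{\,n+\tfrac{ak-a+b}{k}\,}$ and simplify the numerator of the remaining fraction to $\tfrac{(b-a)(k-1)}{k}=-(a-b)s$. Expanding $\tfrac{1}{n+c}$ with $c=\tfrac{ak-a+b}{k}$ then produces the constant term $1$ and, for $n^{-j}$ with $j\ge 1$, the coefficient $(-1)^{j}\,s\,(a-b)\,(ak-a+b)^{j-1}k^{-(j-1)}$, which is exactly the stated table (and in particular there are no positive powers of $n$).

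For $f_a$ I would write the denominator as $(n+as)(n+a)$ and use $\dfrac{n}{(n+as)(n+a)}=\dfrac{1}{1-s}\Bigl(\dfrac{1}{n+a}-\dfrac{s}{n+as}\Bigr)$, the check being $(n+as)-s(n+a)=n(1-s)$. Expanding both terms and using $\tfrac{1-s^{j}}{1-s}=1+s+\cdots+s^{j-1}$ gives $\dfrac{n}{(n+as)(n+a)}=\sum_{j\ge1}(-1)^{j-1}a^{j-1}\bigl(1+s+\cdots+s^{j-1}\bigr)\,n^{-j}$, whose $j=1$ term is precisely $1/n$; subtracting it (that is the leading $1/n$ in the definition of $f_a$) annihilates the $n^{-1}$ coefficient and leaves $a^{j-1}a_{j}$ for $j\ge 2$, with $a_{0}=a_{1}=0$, matching the table. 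At the endpoint $s=1$ (equivalently $k\to\infty$, $t\to 0$) the factorization degenerates to the double pole $n/(n+a)^{2}$; this case is recovered by letting $s\to 1$, with $1+s+\cdots+s^{j-1}\to j$, so the formula persists by continuity.

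The closing remark then follows at once: each expansion obtained above lives in the ring of Laurent series at $n=\infty$ supported on nonpositive powers of $n$ (limit $1$ for $g_{a-b,a}$, limit $0$ for $f_a$), and that ring is closed under multiplication, the coefficient of each $n^{-m}$ in a product being a finite Cauchy sum; hence products of the $f_a$'s and $g_{a-b,a}$'s are again such Laurent series, which legitimizes the term‑by‑term expansions used in the subsequent analysis of $Q_{n,m}$ (Theorem \ref{Q}) and in the derivation of the main theorem. I do not expect a genuine obstacle here: the argument is a finite computation, and the only places needing care are keeping the signs and index ranges straight in the two geometric expansions and treating the $s=1$ boundary as a limit of the $s\ne 1$ formula rather than as a separate case.
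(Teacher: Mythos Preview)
Your proposal is correct. The paper states Lemma~\ref{3.12} without proof, simply presenting the coefficient tables as computational facts to be used downstream; your partial-fraction plus geometric-series derivation supplies exactly the verification the paper omits, and your handling of the $s=1$ limit and the closure remark for products is appropriate.
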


\begin{definition}\label{T}
We define $T_{n,m}$ as follows:
\begin{align*}
T_{n,m}(a) \;=\;& \frac{1}{k}\Bigg(Q_{n,m-1}-\frac{1}{(a-(m-3))k}\, g_{a-(m-2),a}\cdot\Big(Q_{n,m-2} -\frac{1}{(a-(m-4))k}\, g_{a-(m-3),a}\cdot\big(\,\\\cdots& \frac{1}{(a-1)k}\, g_{a-2,a}\cdot
\big(Q_{n,2}-\frac{1}{ak}\, g_{a-1,a}\,\frac{1}{k} f_{a}\big)\big)\Bigg).
\end{align*}
The quantity $T_{n,m}(a)$ can be regarded as a generalized extension of $Q_{n,m}$, and it reduces to $Q_{n,m}$ when $a=m-1$, i.e.,
\[
T_{n,m}(m-1)=Q_{n,m}.
\]
We introduce $T_{n,m}(a)$ because this generalized form naturally appears in the recursive formulation of $Q_{n,m'}$ for $m'\geq m+1$. For instance,
\[
Q_{n,m+1}=\frac{1}{k}Q_{n,m}-\frac{1}{2k}g_{1,m}\,T_{n,m}(m),
\]
\[
Q_{n,m+2}=\frac{1}{k}Q_{n,m+1}-\frac{1}{2k}g_{1,m+1}\,T_{n,m+1}(m+1).
\]
\end{definition}

\begin{ex}
Here we illustrate the difference between $T_{n,m}(a)$ and $Q_{n,m}$ for small $m$.
$$
Q_{n,2}=\frac{1}{k^2}f_1
$$
$$
T_{n,2}(a)=\frac{1}{k^2}f_a
$$
%$\newline$
$$
Q_{n,3}=\frac{f_1}{k^3}-\frac{1}{2k^3}g_{1,2}\cdot f_2
$$
$$
T_{n,3}(a)=\frac{f_1}{k^3}-\frac{1}{ak^3}g_{a-1,a}\cdot f_a
$$

%$\newline$

$$
Q_{n,4}=\frac{1}{k}Q_{n,3}-\frac{1}{2k^4}g_{1,3}\cdot(f_1-\frac{1}{3}g_{2,3}\cdot f_3)
$$
$$
T_{n,4}(a)=\frac{1}{k}Q_{n,3}-\frac{1}{(a-1)k^4}g_{a-2,a}\cdot(f_1-\frac{1}{a}g_{a-1,a}\cdot f_a)
$$
\end{ex}

\begin{lemma}\label{3.15}
Let $q,h,p$ be positive integers, and $k$ be a real constant, define the following function:
$$
f_{q,h,p}(a)=a^q(ak-a+1)^h(ak-a+b)^p
$$
For simplicity, here we will just abbreviate $f_{q,h,p}(a)$ as $f(a)$. Set
$$
g_0(a) = \frac{f(a)-f(c)}{a-c}
$$
$$
g_l = \frac{g_{l-1}(a)-g_{l-1}(c+l)}{a-(c+l)},l\ge1
$$
Then we have
$$
g_{q+h+p-1}(a)=(k-1)^{h+p}
$$
\end{lemma}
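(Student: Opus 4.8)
The plan is to recognize the sequence $g_0,g_1,\dots$ as an iterated first-order finite-difference (divided-difference) scheme and to use the elementary fact that each such step lowers the degree of a polynomial by exactly one while carrying its leading coefficient unchanged. Concretely, for a real number $a_0$ introduce the operator $(\Delta_{a_0}p)(a):=\dfrac{p(a)-p(a_0)}{a-a_0}$ acting on polynomials $p$. Since $p(a)-p(a_0)$ is divisible by $a-a_0$, $\Delta_{a_0}p$ is again a polynomial, and from the identity $\dfrac{a^{j}-a_0^{j}}{a-a_0}=a^{j-1}+a_0a^{j-2}+\cdots+a_0^{j-1}$ one reads off that if $p(a)=\sum_{j\le d}c_j a^j$, then $(\Delta_{a_0}p)(a)$ has formal degree $\le d-1$ with coefficient of $a^{d-1}$ equal to $c_d$. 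In particular this construction is always a genuine polynomial and never depends on which nodes $a_0$ are used.

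First I would observe, directly from the defining recursion $g_0=\Delta_{c}f$ and $g_l=\Delta_{c+l}\,g_{l-1}$, that $g_l=\Delta_{c+l}\,\Delta_{c+l-1}\cdots\Delta_{c+1}\,\Delta_{c}\,f$ is the composition of $l+1$ such operators applied to $f=f_{q,h,p}$. Writing $N:=q+h+p$ for the degree of $f$, the previous paragraph gives, by an immediate induction on $l$, that $g_l$ has formal degree $\le N-(l+1)$ and that its coefficient of $a^{N-(l+1)}$ equals the coefficient of $a^{N}$ in $f$. Taking $l=q+h+p-1$ forces the formal degree down to $0$, so $g_{q+h+p-1}$ is a constant, namely the coefficient of $a^{q+h+p}$ in $f_{q,h,p}$; in particular it depends neither on $c$ nor on the additive constants $1$ and $b$.

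It then remains to compute that leading coefficient. Factoring $ak-a+1=(k-1)a+1$ and $ak-a+b=(k-1)a+b$, we get $f_{q,h,p}(a)=a^{q}\bigl((k-1)a+1\bigr)^{h}\bigl((k-1)a+b\bigr)^{p}$, whose top-degree term is $a^{q}\cdot(k-1)^{h}a^{h}\cdot(k-1)^{p}a^{p}=(k-1)^{h+p}a^{q+h+p}$, so $g_{q+h+p-1}(a)=(k-1)^{h+p}$, as asserted (the boundary case $k=1$ is consistent, since then $f_{q,h,p}$ has degree $q<N$ and all $N$-th order differences vanish). There is no genuine obstacle here beyond the bookkeeping; the only point that must be handled carefully is the claim that $\Delta_{a_0}$ lowers the degree by exactly one and preserves the leading coefficient — including when that coefficient is formally zero — since it is this precise behavior, rather than an a priori faster collapse, that makes the $N$-fold composition land on the constant $(k-1)^{h+p}$ instead of on $0$. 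Equivalently, one may phrase the entire argument through Newton's divided differences, noting that $g_l(a)=f[c,c+1,\dots,c+l,a]$ and that the order-$N$ divided difference of a degree-$N$ polynomial equals its leading coefficient.
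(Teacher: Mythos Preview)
Your argument is correct and is essentially the same approach as the paper's: both recognize that each step $g_l=\dfrac{g_{l-1}(a)-g_{l-1}(c+l)}{a-(c+l)}$ is a divided-difference operator that lowers the polynomial degree by one while preserving the leading coefficient, so after $q+h+p$ applications only the coefficient of $a^{q+h+p}$ in $f$, namely $(k-1)^{h+p}$, survives. The paper carries this out via an explicit binomial expansion of $f(a)-f(c)$ and an inductive factoring argument, whereas you phrase it more compactly through the operator $\Delta_{a_0}$ and Newton's divided differences, but the underlying idea is identical.
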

\begin{proof}
\begin{align*}
f(a)-f(c)&=
    a^q(ak-a+1)^h(ak-a+b)^p-c^q(ck-c+1)^h(ck-c+b)^p\\&=\sum_{i=0}^h\sum_{j=0}^p\binom{p}{j}\binom{h}{i}a^{i+j+q}(k-1)^{i+j}b^{p-j}-\sum_{i=0}^h\sum_{j=0}^p\binom{p}{j}\binom{h}{i}c^{i+j+q}(k-1)^{i+j}b^{p-j}\nonumber\\
    &=\sum_{i=0}^h\sum_{j=0}^p\binom{p}{j}\binom{h}{i}(k-1)^{i+j}b^{p-j}(a^{i+j+q}-c^{i+j+q})\nonumber\\
    &=(a-c)\sum_{i=0}^h\sum_{j=0}^p\binom{p}{j}\binom{h}{i}(k-1)^{i+j}b^{p-j}\left(\sum_{m=0}^{i+j+q-1}a^{i+j+q-1-m}c^m\right)
\end{align*}
For simplicity we write $M=i+j+q$,
\begin{align*}
    &\cfrac{f(a)-f(c)}{a-c}-\cfrac{f(c+1)-f(c)}{c+1-c}\\
    &\quad=\sum_{i=0}^h\sum_{j=0}^p\binom{p}{j}\binom{h}{i}(k-1)^{i+j}b^{p-j}\left(\sum_{m=0}^{i+j+q-1}a^{i+j+q-1-m}c^m-(c+1)^{i+j+q-1-m}c^m\right)\\
    &\quad=(a-(c+1))\sum_{i=0}^h\sum_{j=0}^p\binom{p}{j}\binom{h}{i}(k-1)^{i+j}b^{p-j}\left(\sum_{m=0}^{M-1}c^m\sum_{m'=0}^{M-2-m}a^{M-2-m-m'}(c+1)^{m'}\right)
\end{align*}

As we can see, after each subtraction we can factor out a new element $a-(c+i)$, which decreases the original power $a^{q+h+p}$ by $1$. By induction, it follows that this operation stops after factoring out exactly $q+h+p$ factors of the form $a-(c+i)$, where $i=0,\cdots, q+h+p-1$. Note that $f(a)$ has degree $q+h+p$, therefore, any terms involving variable $a$ with lower degree will be sent to $0$ through our operation. We are left out with the coefficient in front of $a^{q+h+p}$ in the original function $f(a)$, i.e., $(k-1)^{h+p}$.
\end{proof}
\begin{lemma}\label{3.16}
According to Lemma \ref{3.15}, let $H_b(a)=\frac{1-a^b}{1-a}$. $b$ is a positive integer. We then have
$$
g_{b-2}(a) = 1
$$
$$
\frac{1-a^b}{1-a}=a^{b-1}+a^{b-2}+...1,b\geq 1$$
\end{lemma}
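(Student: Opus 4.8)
The plan is to dispatch the two displayed identities one at a time. The second, $\frac{1-a^b}{1-a}=a^{b-1}+a^{b-2}+\cdots+1$, is nothing but the finite geometric series: from the polynomial identity $(1-a)\sum_{j=0}^{b-1}a^{j}=1-a^{b}$ one divides by $1-a$, the removable singularity at $a=1$ being filled in by continuity, so that $H_b$ is in fact a polynomial of degree exactly $b-1$ with leading coefficient $1$ and constant term $1$. This is essentially the only substantive input about $H_b$ that we shall need.

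For the first identity, $g_{b-2}(a)=1$, I would run exactly the degree-reduction mechanism already established in the proof of Lemma \ref{3.15}, now with $H_b$ playing the role of $f_{q,h,p}$. The engine is the elementary fact, proved there via the expansion $P(a)-P(\gamma)=\sum_{i\ge 1}p_i(a^i-\gamma^i)=(a-\gamma)\sum_{i\ge 1}p_i\sum_{m=0}^{i-1}a^{i-1-m}\gamma^m$, that for any polynomial $P$ of degree $d\ge 1$ and any constant $\gamma$, the quotient $\frac{P(a)-P(\gamma)}{a-\gamma}$ is again a polynomial, of degree $d-1$, with the \emph{same} leading coefficient as $P$. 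Iterating this with the successive evaluation points $c,c+1,\dots,c+b-2$ that define $g_0,g_1,\dots,g_{b-2}$, each step lowers the degree by one while leaving the leading coefficient pinned at $1$; hence $g_l$ has degree $b-2-l$, and taking $l=b-2$ leaves a polynomial of degree $0$, namely the constant $1$. In particular the value is independent of the base point $c$, consistent with the conclusion of Lemma \ref{3.15}.

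The bookkeeping is essentially the only thing to watch. First, one must confirm the division is exact at every stage, i.e. that $a-(c+l)$ divides $g_{l-1}(a)-g_{l-1}(c+l)$ so that $g_l$ stays a polynomial; this is automatic, since $\lambda-\mu$ divides $R(\lambda)-R(\mu)$ for every polynomial $R$. Second, one must get the index count right: $H_b$ has degree $b-1$, so exactly $b-1$ divided differences ($g_0$ through $g_{b-2}$) are needed to reach a constant, which is why the last $g$ carries the subscript $b-2$; the case $b=1$ is degenerate and amounts to reading $g_{-1}=H_1\equiv 1$, in agreement with the second identity. Beyond this there is no real obstacle: no new combinatorics enters, and the claim drops out as soon as the leading-coefficient-preservation statement from the proof of Lemma \ref{3.15} is invoked.
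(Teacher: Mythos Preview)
Your proposal is correct and follows essentially the same approach as the paper's own proof: expand $H_b(a)$ as the degree-$(b-1)$ polynomial $a^{b-1}+\cdots+1$ with leading coefficient $1$, then invoke the degree-reduction mechanism from Lemma~\ref{3.15} (each divided difference lowers the degree by one while preserving the leading coefficient) to conclude that $g_{b-2}(a)$ is the constant $1$. The paper illustrates the first step concretely by computing $H_b(a)-H_b(2)$ and factoring out $a-2$, then explicitly says the rest ``is quite similar to the proof of the previous lemma''; your version abstracts this slightly by not fixing a particular base point $c$, but the argument is the same.
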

\begin{proof}
First note that
\begin{align*}
 H_b(a)-H_b(2)&=\frac{1-a^b}{1-a}-\frac{1-2^b}{1-2}\\&=a^{b-1}-2^{b-1}+a^{b-2}-2^{b-2}+...1-1\\
 &=(a-2)\left(\sum_{i=1}^{b-1}2^{i-1}a^{b-1-i}+\sum_{i=1}^{b-2}2^{i-1}a^{b-2-i}+...+1\right)\\
 &=(a-2)\sum_{i=1}^{b-1}(2^i-1)a^{b-1-i}
\end{align*}
Then the proof is quite similar to the proof of the previous lemma. The key point is that we have a function consisting of powers of $a^i$, which means that $a^i-c^i$ can be broken down to the sum of terms of smaller degree. And we repeat this process, we can get the degree down to $0$ and only left with the coefficient of the term of the highest degree in the original function.
\end{proof}

\begin{definition}
$C(n,k)$ denotes the element in the $n$th row and $x$th column in the Catalan's triangle, and $C(n,k)=\frac{(n+k)!(n-k+1)}{k!(n+1)!}$.
For any integer $m\ge1$, set $C_1(n,x)=C(n,x)$ for $0\le x\le n$ and $C_2(n,x)=C(n+1,x)$ for $0\le x\le n+1$, and define inductively
$$
C_{m}(n,x)=\begin{cases}
\displaystyle \binom{n+x}{x} & \text{if }0\leq x<m,\\[6pt]
\displaystyle \binom{n+x}{x} - \binom{n+x}{x-m} & \text{if }m\leq x\leq n+m-1,\\[6pt]
\displaystyle 0 & \text{if }n+m-1<x.
\end{cases}
$$

\end{definition}

\begin{lemma}[Properties of Catalan’s Trapezoid]\label{3.18}
    $$C_{b-m+2}(m-3,a)+C_{b-m+1}(m-3,a-1)+\cdots+C_2(m-3,a-b+m)=C_{b-m+1}(m-2,a),a\ge b-m$$
    $$C_{b-m+2}(m-3,a)+C_{b-m+1}(m-3,a-1)+\cdots+C_{b-a-m+2}(m-3,0)=C_{b-m+1}(m-2,a),a<b-m$$
\end{lemma}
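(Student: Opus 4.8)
The plan is to prove both identities by collapsing the three–branch definition of $C_m(n,x)$ into a single binomial expression and then reducing each side to elementary ``hockey–stick'' (parallel/column) summations. First I would record the uniform form: with the convention $\binom{p}{q}=0$ unless $0\le q\le p$, one has $C_m(n,x)=\binom{n+x}{x}-\binom{n+x}{x-m}$ for $0\le x\le n+m-1$ (the first branch $x<m$ is the subcase where the subtracted term already vanishes), and $C_m(n,x)=0$ for $x>n+m-1$. This is a one–line check against the definition, and it is consistent with $C_1(n,x)=C(n,x)$ and $C_2(n,x)=C(n+1,x)$.

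The structural observation driving everything is that, in the left–hand sum of the first identity, the $i$-th summand is $C_{\,b-m+2-i}(m-3,\,a-i)$, whose subtracted index $(a-i)-(b-m+2-i)=a-b+m-2$ is \emph{the same for every $i$}. Writing $N=m-3$ and $s=b-m$ (so the hypothesis is $a\ge s$ and $a-b+m-2=a-s-2$), the sum therefore telescopes into two runs,
\[
\sum_{i=0}^{s}\binom{N+a-i}{a-i}\;-\;\sum_{i=0}^{s}\binom{N+a-i}{a-s-2}.
\]
In the first run the substitution $j=a-i$ gives a diagonal sum $\sum_{j=a-s}^{a}\binom{N+j}{j}=\binom{N+a+1}{a}-\binom{N+a-s}{a-s-1}$ by parallel summation; in the second, $p=N+a-i$ gives a column sum $\sum_{p=N+a-s}^{N+a}\binom{p}{a-s-2}=\binom{N+a+1}{a-s-1}-\binom{N+a-s}{a-s-1}$. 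The two ``$\binom{N+a-s}{a-s-1}$'' tails cancel, so the left side equals $\binom{N+a+1}{a}-\binom{N+a+1}{a-s-1}=C_{s+1}(N+1,a)=C_{b-m+1}(m-2,a)$ by the uniform form. The only range not covered by that form, $a>b-2=N+s+1$, is handled in a line: the third–branch condition $a-i>N+(b-m+2-i)-1$ is equivalent to $a>N+s+1$ \emph{independently of $i$}, so every summand — and, by the same threshold, $C_{b-m+1}(m-2,a)$ itself — is $0$ there.

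The second identity ($a<b-m$) is the degenerate version of the same argument and is shorter. Since $a-s-2<0$, every subtracted binomial is $0$, so $C_{\,b-m+2-i}(m-3,a-i)=\binom{N+a-i}{a-i}$; the sum runs $i=0,\dots,a$ because it terminates once the second argument would turn negative, which is exactly the stated last term $C_{\,b-a-m+2}(m-3,0)$. Parallel summation then gives $\sum_{j=0}^{a}\binom{N+j}{j}=\binom{N+a+1}{a}$, while on the right $a<b-m+1$ puts $C_{b-m+1}(m-2,a)$ in its first branch, equal to $\binom{(m-2)+a}{a}=\binom{N+a+1}{a}$, so the sides agree.

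I expect the only friction to be bookkeeping: aligning the summation ranges of the two hockey–stick applications with the branch cases of $C_m(n,x)$, and verifying the small–parameter corners ($a=b-m$, or $m=3$ so $N=0$) against the convention $\binom{p}{q}=0$ outside $0\le q\le p$. Nothing here is deep, but it is where a careless write-up would slip. If one prefers a branch–free derivation, the same two identities also follow by induction on $b-m$ using the Catalan–trapezoid recurrence $C_m(n,x)=C_m(n-1,x)+C_m(n,x-1)$ together with the shift relation $C_{m+1}(n,x)-C_m(n,x)=C_1(n+m,x-m)$; I would keep that as an independent cross-check.
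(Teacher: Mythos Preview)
Your proposal is correct and follows essentially the same route as the paper: expand $C_m(n,x)$ as a difference of binomials and reduce by elementary summation. The paper carries this out by peeling off one summand at a time via Pascal's rule (so the telescoping is done step by step), whereas you first isolate the structural fact that the lower index $a-b+m-2$ of the subtracted binomial is constant across the sum and then invoke hockey-stick once on each of the two resulting runs; the cancellation of the $\binom{N+a-s}{a-s-1}$ tails in your write-up is exactly the endpoint of the paper's iterated Pascal reduction. Your handling of the degenerate range $a>b-2$ and of the second identity ($a<b-m$) is likewise the same in substance, just stated more compactly through the convention $\binom{p}{q}=0$ for $q<0$.
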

\begin{proof}
    For $a\ge b-m$,
    \begin{align*}
      &C_{b-m+1}(m-2,a)-\left(C_{b-m+2}(m-3,a)+C_{b-m+1}(m-3,a-1)+\cdots+C_2(m-3,a-b+m)\right)\\
      &=\binom{a+m-2}{a}-\binom{a+m-2}{a-b+m-1}-\left(\binom{a+m-3}{a}-\binom{a+m-3}{a-b+m-2}+\binom{a+m-4}{a-1}-\right.\\
      &\quad \left.\binom{a+m-4}{a-b+m-2}+\cdots+\binom{a-b+2m-3}{a-b+m}-\binom{a-b+2m-3}{a-b+m-2}\right)\\
      &=\left(\binom{a+m-2}{a}-\binom{a+m-3}{a}\right)-\left(\binom{a+m-2}{a-b+m-1}-\binom{a+m-3}{a-b+m-2}\right)-\left(\binom{a+m-4}{a-1}-\right.\\
      &\quad \binom{a+m-4}{a-b+m-2}\left.+\cdots+\binom{a-b+2m-3}{a-b+m}-\binom{a-b+2m-3}{a-b+m-2}\right)\\
      &=\binom{a+m-3}{a-1}-\binom{a+m-3}{a-b+m-1}-\left(\binom{a+m-4}{a-1}-\binom{a+m-4}{a-b+m-2}+\cdots+\binom{a-b+2m-3}{a-b+m}\right.\\
      &\quad\left.-\binom{a-b+2m-3}{a-b+m-2}\right)\\
      &=\binom{a+m-2-(b-m+1)}{a-(b-m+1)}-\binom{a+m-2-(b-m+1)}{a-b+m-1}\\
      &=0
    \end{align*}
    For $a<b-m$, similarly, it follows that
    \begin{align*}
    &C_{b-m+1}(m-2,a)-(C_{b-m+2}(m-3,a)+C_{b-m+1}(m-3,a-1)+\cdots+C_{b-a-m+2}(m-3,0))\\
    &=\binom{m-2+a-a}{a-a}-\binom{m-3}{0}\\
    &=0
    \end{align*}
\end{proof}

Notation: we use $\floor*{\frac{1}{n^x}}f$ to denote the coefficient of $\frac{1}{n^x}$ of the Laurent expansion of $f$.
\begin{theorem}\label{th5}
We have the following result:
$$
\frac{1}{(a-(m-2))}\floor*{\frac{1}{n^j}}T_{n,m}(a)=\floor*{\frac{1}{n^j}}Q_{n,m}, \forall 1\leq j\leq m, a\geq m-1
$$
In particular,
\begin{equation}
\floor*{\frac{1}{n^j}}T_{n,m}(a)=\floor*{\frac{1}{n^j}}Q_{n,m}=0,\forall 1\leq j\leq m-1, a\geq m-1
\label{p1}
\end{equation}

\begin{equation}
\begin{aligned}
\frac{1}{(a-(m-2))}\floor*{\frac{1}{n^m}}T_{n,m}(a)
&=\floor*{\frac{1}{n^m}}Q_{n,m} \\
&=\frac{(-1)^m}{k^m}\mathbf{1}^T\left(\begin{bmatrix}
1\\-s\\s^2\\\vdots\\(-s)^{m-2}
\end{bmatrix}\odot
\begin{bmatrix}
C(m-2,0)\\C(m-2,1)\\C(m-2,2)\\\vdots\\C(m-2,m-2)
\end{bmatrix}
\odot\begin{bmatrix}
a_m\\a_{m-1}\\a_{m-2}\\\vdots\\a_2
\end{bmatrix}\right), a\geq m-1,
\end{aligned}
\label{p2}
\end{equation}
where $\odot$ denotes the Hadamard product, i.e., the element-wise multiplication.
\end{theorem}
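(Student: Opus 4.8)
The plan is a strong induction on $m$ proving the assertions of Theorem \ref{th5} together with one auxiliary fact (stated below) about the $\frac{1}{n^{m+1}}$-coefficient of $T_{n,m}(a)$, which is what makes the induction close. The base case $m=2$ is read off directly: $Q_{n,2}=f_1/k^2$ and $T_{n,2}(a)=f_a/k^2$, so Lemma \ref{3.12} gives $\floor*{\frac1n}Q_{n,2}=\floor*{\frac1n}T_{n,2}(a)=0$, $\floor*{\frac1{n^2}}T_{n,2}(a)=a\,a_2/k^2=a\,\floor*{\frac1{n^2}}Q_{n,2}$, and $\floor*{\frac1{n^2}}Q_{n,2}=a_2/k^2$, which is the $m=2$ case of \eqref{p2} because $C(0,0)=1$.

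The engine is the one-step recursion obtained by peeling the outermost layer off the nest in Definition \ref{T} (the surviving inner nest is exactly $k\,T_{n,m}(a)$):
\begin{equation}
T_{n,m+1}(a)=\frac1k\,Q_{n,m}-\frac{1}{(a-(m-2))k}\,g_{a-(m-1),a}\,T_{n,m}(a),\qquad T_{n,m+1}(m)=Q_{n,m+1}.
\label{eq:plan-rec}
\end{equation}
By the inductive proportionality, $\frac{1}{a-(m-2)}T_{n,m}(a)$ agrees with $Q_{n,m}$ at every coefficient $\frac1{n^{l}}$, $l\le m$, so through order $\frac1{n^{m}}$ the right-hand side of \eqref{eq:plan-rec} equals $\frac1k\bigl(1-g_{a-(m-1),a}\bigr)Q_{n,m}$; since $1-g_{a-(m-1),a}=O(1/n)$ and, by the inductive vanishing, $Q_{n,m}=O(1/n^{m})$, this is $O(1/n^{m+1})$, so $\floor*{\frac1{n^{j}}}T_{n,m+1}(a)=0$ for $1\le j\le m$, and evaluating at $a=m$ yields \eqref{p1} at level $m+1$. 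At order $\frac1{n^{m+1}}$ the same computation survives, driven by the leading coefficient of $1-g_{a-(m-1),a}$ — which, by Lemma \ref{3.12}, carries the factor $a-(m-1)$ — together with the auxiliary datum; it produces $\floor*{\frac1{n^{m+1}}}T_{n,m+1}(a)=(a-(m-1))\,c$ with $c$ independent of $a$, and $c=\floor*{\frac1{n^{m+1}}}Q_{n,m+1}$ upon setting $a=m$, which is the proportionality at level $m+1$.

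For the closed form \eqref{p2}, unwind \eqref{eq:plan-rec} all the way down to the innermost layer $Q_{n,2}-\frac1{ak^{2}}g_{a-1,a}f_{a}$, substitute the Laurent expansions of $f_{a}$ and of all the $g$'s from Lemma \ref{3.12}, and collect the coefficient of $\frac1{n^{m+1}}$. This gives a multiple binomial sum over paths through the nest whose summands are products of the $a_{j}$'s with powers of $s$ and $1/k$ and polynomials of the type $a^{q}(ak-a+1)^{h}(ak-a+b)^{p}$. The finite-difference identities of Lemmas \ref{3.15}--\ref{3.16} show that, after the telescoping forced by the successive arguments $a=2,3,\dots$ at which the $g$'s are evaluated along the nest, all subleading-degree contributions in $a$ are annihilated and only leading coefficients remain; the Catalan--trapezoid recurrence of Lemma \ref{3.18} then packages the surviving binomial sums into exactly the weights $C(m-1,i)$, yielding $\floor*{\frac1{n^{m+1}}}Q_{n,m+1}=\frac{(-1)^{m+1}}{k^{m+1}}\sum_{i=0}^{m-1}(-s)^{i}C(m-1,i)\,a_{m+1-i}$.

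The main obstacle is the bookkeeping at the diagonal. The $i=0$ term of the convolution in the $j=m+1$ step requires $\floor*{\frac1{n^{m+1}}}T_{n,m}(a)$, which lies one coefficient past the range $j\le m$ where proportionality is known; hence the induction must also carry the auxiliary fact that $\frac{1}{a-(m-2)}\floor*{\frac1{n^{m+1}}}T_{n,m}(a)-\floor*{\frac1{n^{m+1}}}Q_{n,m}$ is divisible by $a-(m-1)$ — a one-row ``staircase'' of Laurent data that is itself propagated by \eqref{eq:plan-rec}. Checking that this divisibility persists, and that the surviving polynomial-in-$a$ pieces collapse under Lemmas \ref{3.15}--\ref{3.18} onto the Catalan triangle rather than leaving any spurious $a$-dependence in $\floor*{\frac1{n^{m+1}}}Q_{n,m+1}$, is the delicate core of the argument.
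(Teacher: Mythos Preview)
Your overall architecture matches the paper's: both arguments hinge on the one-step recursion \eqref{eq:plan-rec} (the paper's equation (4)) and an induction on $m$ driven by the Laurent coefficients of $f_a$ and $g_{a-b,a}$ from Lemma \ref{3.12}. Your vanishing argument for $j\le m$ is exactly right --- the only surviving convolution term at order $1/n^{m}$ is $i=0$, and proportionality kills it --- and your observation that the $1/n$-coefficient of $g_{a-(m-1),a}$ supplies the factor $a-(m-1)$ needed for proportionality at $j=m+1$ is the correct mechanism.

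The gap is in the size of the inductive hypothesis. Carrying only the theorem plus one extra row --- the divisibility of $\frac{1}{a-(m-2)}\floor*{\tfrac{1}{n^{m+1}}} T_{n,m}(a)-\floor*{\tfrac{1}{n^{m+1}}} Q_{n,m}$ by $a-(m-1)$ --- does not self-propagate. To verify that same divisibility at level $m+1$ (i.e., for $\floor*{\tfrac{1}{n^{m+2}}} T_{n,m+1}(a)$), the convolution in \eqref{eq:plan-rec} forces you to know $\floor*{\tfrac{1}{n^{m+2}}} T_{n,m}(a)$ and $\floor*{\tfrac{1}{n^{m+2}}} Q_{n,m}$, which lie \emph{two} levels past the theorem and are not covered by your hypothesis; the next step would need three levels, and so on. The paper resolves this by carrying the entire staircase: its induction hypothesis is the general formula for $\floor*{\tfrac{1}{n^{j}}} T_{n,m}(a)$ at \emph{all} $j\ge m$, namely that it equals $(a-m+2)$ times a polynomial in $a$ of degree $j-m$ whose leading-in-$a$ coefficient in the $a_{j-i}$-component is $(-1)^{m}k^{-m}C_{j-m+2}(m-3,i)(-s)^{i}$. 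Lemmas \ref{3.15}--\ref{3.16} justify discarding the subleading polynomial remainders $R_{m,i,j-m-1}(a)$ under the differencing, and Lemma \ref{3.18} is precisely the column-sum identity that makes these Catalan--trapezoid leading coefficients close under the recursion; specializing to $j=m$ (where $C_{2}(m-3,i)=C(m-2,i)$) recovers \eqref{p2}. So the fix is to strengthen your inductive load from ``one auxiliary row'' to the full Laurent table; once you do that, your plan and the paper's proof coincide.
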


\begin{proof}
Because $R_{n,m}(t)=\mathcal{O}(\frac{1}{n^m})$, we know $\floor*{\frac{1}{n^j}}T_{n,m}(a)=0,\floor*{\frac{1}{n^j}}Q_{n,m}=0$, $\forall 1\le j\leq m-1$, $a\ge m-1$, which means (\ref{p1}) is valid.

We will prove (\ref{p2}) using induction. When $m=2$, (\ref{p2}) is obviously fulfilled. First, we assume that $\forall 3\leq m\leq x$, (\ref{p2}) holds,  and we will proceed to compute whether it holds for the case where $m = x + 1$. For some integer $y\geq m$, we have
\begin{equation}
\floor*{\frac{1}{n^y}}T_{n,m}(a)\cdot g_{a-b,a}=\sum_{i=0}^{y-m}\floor*{\frac{1}{n^{y-i}}}T_{n,m}(a)\cdot \floor*{\frac{1}{n^{i}}}g_{a-b,a}
\label{(3)}
\end{equation}

According to the formulation of $T_{n,m}$, we write the following formula with $s=1-1/k$:
\begin{align}
 \floor*{\frac{1}{n^x}}T_{n,m}(a)&=  \frac{1}{k}\floor*{\frac{1}{n^x}}Q_{n,m-1}- \frac{1}{(a-m+3)k}\floor*{\frac{1}{n^x}}g_{a-m+2,a}T_{n,m-1}(a)\nonumber\\
 &=\frac{1}{k}\floor*{\frac{1}{n^x}}T_{n,m-1}(m-2)- \frac{1}{(a-m+3)k}\floor*{\frac{1}{n^x}}g_{a-m+2,a}T_{n,m-1}(a)\nonumber\\
 &=\frac{\floor*{\frac{1}{n^x}}((a-m+3)T_{n,m-1}(m-2)-T_{n,m-1}(a))-\Sigma_{i=1}^{n-m+1}\floor*{\frac{1}{n^i}}g_{a-m+2,a}\cdot\floor*{\frac{1}{n^{x-i}}}T_{n,m-1}(a)}{(a-m+3)k}\nonumber\\
 &=\frac{1}{k}(\floor*{\frac{1}{n^x}}(T_{n,m-1}(m-2)-\frac{T_{n,m-1}(a)}{(a-m+3)})\nonumber\\
 &-\frac{\Sigma_{i=1}^{x-m+1}\frac{(-1)^is}{k^{i-1}}(a-m+2)(a(k-1)+m-2)^{i-1}\floor*{\frac{1}{n^{x-i}}}T_{n,m-1}(a)}{(a-m+3)})
 \label{(4)}
\end{align}

It is easy to observe that after expanding, the expression $\floor*{\frac{1}{n^x}}T_{n,x}(a)$ is composed of terms with lower indices: $\floor*{\frac{1}{n^j}}T_{n,x-1}(a), j\leq x$. At the same time, $\floor*{\frac{1}{n^x}}T_{n,x}(a)$ is expressed in terms of $a_i,2\leq i\leq x$, we will look at the formulation of $\floor*{\frac{1}{n^x}}T_{n,x}(a)$ by focusing on coefficients in front of each $a_i$. Next, we will illustrate the construction of $\floor*{\frac{1}{n^j}}T_{n,j}(a)$ starting from the base case $T_{n,3}$, and investigate how the coefficient in front of each $a_i$ accumulates.\\

By equation (\ref{(3)}) and Lemma \ref{3.12}, we write the Laurent expansion of $T_{n,3}(a)$:

$$
T_{n,3}(a)=\frac{1}{k^3}[f_1-\frac{1}{a}g_{a-1,a}f_a]
$$

\begin{center}
\begin{tabular}{ c c  }
 $1$ & $0$\\[6pt]
 $\dfrac{1}{n}$ & $0$ \\[6pt]
 $\dfrac{1}{n^2}$ & $0$ \\[6pt]
 $\dfrac{1}{n^3}$ &
 $\displaystyle
 -\frac{1}{k^3}(a-1)
 \begin{bmatrix}
 1\\[4pt]
 -s
 \end{bmatrix}
 \cdot
 \begin{bmatrix}
 a_3\\[4pt]
 a_2
 \end{bmatrix}
 $\\[12pt]
 $\dfrac{1}{n^4}$ &
 $\displaystyle
 -\frac{1}{k^3}(a-1)
 \begin{bmatrix}
 a+1\\[4pt]
 -as\\[4pt]
 \dfrac{s}{k}\bigl(a(k-1)+1\bigr)
 \end{bmatrix}
 \cdot
 \begin{bmatrix}
 a_4\\[4pt]
 a_3\\[4pt]
 a_2
 \end{bmatrix}
 $\\[12pt]
 $\dfrac{1}{n^5}$ &
 $\displaystyle
 -\frac{1}{k^3}(a-1)
 \begin{bmatrix}
 a^2+a+1\\[4pt]
 -a^2 s\\[4pt]
 a\displaystyle\frac{s}{k}\bigl(a(k-1)+1\bigr)\\[4pt]
 -\displaystyle\frac{s}{k^2}\bigl(a(k-1)+1\bigr)^2
 \end{bmatrix}
 \cdot
 \begin{bmatrix}
 a_5\\[4pt]
 a_4\\[4pt]
 a_3\\[4pt]
 a_2
 \end{bmatrix}
 $\\[12pt]
 $\vdots$ & $\vdots$ \\[6pt]
 $\dfrac{1}{n^b}$ &
 $\displaystyle
 -\frac{1}{k^3}(a-1)
 \begin{bmatrix}
 \dfrac{1-a^{\,b-2}}{1-a}\\[6pt]
 -a^{\,b-3}s\\[6pt]
 a^{\,b-4}\dfrac{s}{k}\bigl(a(k-1)+1\bigr)\\[6pt]
 -a^{\,b-5}\dfrac{s}{k^2}\bigl(a(k-1)+1\bigr)^2\\[6pt]
 \vdots\\[4pt]
 \dfrac{(-1)^b s}{k^{\,b-3}}\bigl(a(k-1)+1\bigr)^{b-3}
 \end{bmatrix}
 \cdot
 \begin{bmatrix}
 a_b\\[4pt]
 a_{b-1}\\[4pt]
 a_{b-2}\\[4pt]
 \vdots\\[4pt]
 a_2
 \end{bmatrix}
 $\\
\end{tabular}
\end{center}

We can see that for $b\ge3$,
\[
\begingroup
\renewcommand{\arraystretch}{1.25} % 调整体矩阵行高（可改为1.3,1.4...）
\left\lfloor \displaystyle\frac{1}{n^b} \right\rfloor
\!\left(\displaystyle\frac{T_{n,3}(2)}{2-1} - \displaystyle\frac{T_{n,3}(a)}{a-1}\right)
= \displaystyle\frac{1}{k^3}
\begin{bmatrix}
\displaystyle \frac{1-a^{b-2}}{1-a}-\frac{1-2^{b-2}}{1-2} \\[6pt]
\displaystyle (-a^{b-3}-(-2^{b-3}))\,s \\[6pt]
\displaystyle a^{b-4}\frac{s}{k}\bigl(a(k-1)+1\bigr)-2^{b-4}\frac{s}{k}\bigl(2(k-1)+1\bigr) \\[6pt]
\displaystyle -a^{b-5}\frac{s}{k^2}\bigl(a(k-1)+1\bigr)^2+2^{b-5}\frac{s}{k^2}\bigl(2(k-1)+1\bigr)^2 \\[6pt]
\displaystyle \vdots \\[6pt]
\displaystyle \frac{(-1)^b s}{k^{b-3}}\bigl(a(k-1)+1\bigr)^{b-3} - \frac{(-1)^b s}{k^{b-3}}\bigl(2(k-1)+1\bigr)^{b-3}
\end{bmatrix}
\cdot
\begin{bmatrix}
\displaystyle a_b \\[6pt]
\displaystyle a_{b-1} \\[6pt]
\displaystyle a_{b-2} \\[6pt]
\displaystyle \vdots \\[6pt]
\displaystyle a_2
\end{bmatrix}
\endgroup
\]

By equation (\ref{(4)}) we can see that for $j\le 4$,

\begin{align*}
    &\left\lfloor \dfrac{1}{n^j} \right\rfloor T_{n,4}(a)
    = \dfrac{1}{k}\left\lfloor \dfrac{1}{n^j} \right\rfloor \left( \dfrac{T_{n,3}(2)}{2-1} - \dfrac{T_{n,3}(a)}{a-1} \right) \\
     & \quad  + s\dfrac{1}{k}(a-2) \dfrac{ \left\lfloor \dfrac{1}{n^{j-1}} \right\rfloor T_{n,3}(a) + \cdots + \dfrac{(-1)^{j-4}}{k^{j-4}} (ak-a+2)^{j-4} \left\lfloor \dfrac{1}{n^{3}} \right\rfloor T_{n,3}(a) }{a-1} \\
    &= \dfrac{1}{k}\left\lfloor \dfrac{1}{n^j} \right\rfloor \left( \dfrac{T_{n,3}(2)}{2-1} - \dfrac{T_{n,3}(a)}{a-1} \right)
        - s\dfrac{1}{k}(a-2) \left( \left\lfloor \dfrac{1}{n^{j-1}} \right\rfloor \dfrac{T_{n,3}(a)}{1-a} + \cdots + \dfrac{(-1)^{j-4}}{k^{j-4}} (ak-a+2)^{j-4} \left\lfloor \dfrac{1}{n^{3}} \right\rfloor \dfrac{T_{n,3}(a)}{1-a} \right) \\
    &= \dfrac{a-2}{k^4}
        \begin{bmatrix} a_j \\ a_{j-1} \\ a_{j-2} \\ \vdots \\a_3\\ a_2 \end{bmatrix}
        \cdot \left(
            \begin{bmatrix}
                \dfrac{\dfrac{1-a^{j-2}}{1-a}-\dfrac{1-2^{j-2}}{1-2}}{a-2} \\
                \dfrac{(-a^{j-3}-(-2^{j-3}))s}{a-2} \\
                \dfrac{a^{j-4}\dfrac{s}{k}(a(k-1)+1)-2^{j-4}\dfrac{s}{k}(2(k-1)+1)}{a-2} \\
                \vdots \\\dfrac{\dfrac{(-1)^{j-1}s}{k^{j-4}}a(a(k-1)+1)^{j-4}-\dfrac{(-1)^{j-1}s}{k^{j-4}}2(2(k-1)+1)^{j-4}}{a-2}\\
                \dfrac{\dfrac{(-1)^js}{k^{j-3}}(a(k-1)+1)^{j-3}-\dfrac{(-1)^js}{k^{j-3}}(2(k-1)+1)^{j-3}}{a-2}
            \end{bmatrix} \right.\\
            &\quad+
                \begin{bmatrix} 0 \\ -s\dfrac{1-a^{j-3}}{1-a} \\ a^{j-4}s^2 \\ \vdots \\\dfrac{(-1)^{j-1} s^2}{k^{j-5}}a\bigl(a(k-1)+1\bigr)^{j-5} \\\dfrac{(-1)^{j} s^2}{k^{j-4}}\bigl(a(k-1)+1\bigr)^{j-4}        \end{bmatrix}
                +
                \begin{bmatrix}
                0 \\ 0 \\ s\dfrac{a(k-1)+2}{k}\dfrac{1-a^{j-4}}{1-a} \\ \vdots \\s\dfrac{a(k-1)+2}{k}\dfrac{(-1)^{j-1} s}{k^{j-6}}a\bigl(a(k-1)+1\bigr)^{j-6}\\ s\dfrac{a(k-1)+2}{k}\dfrac{(-1)^{j} s}{k^{j-5}}\bigl(a(k-1)+1\bigr)^{j-5} \end{bmatrix}
                +\dots\\
               &\quad +\left.\begin{bmatrix} 0 \\ 0 \\ 0 \\ \vdots \\\dfrac{(-1)^{j-1}}{k^{j-4}}(a(k-1)+2)^{j-4}s\\ \dfrac{(-1)^j}{k^{j-4}}(a(k-1)+2)^{j-4}s^2\end{bmatrix}\right)\\
             &=(a-2)\cdot\begin{bmatrix}
                a_j\\a_{j-1}\\a_{j-2}\\\vdots\\a_2
             \end{bmatrix}\cdot\begin{bmatrix}
                a^{j-4}+R_{4,1,j-5}(a)\\-2s a^{j-4}+R_{4,2,j-5}(a)\\3s^2 a^{j-4}+R_{4,3,j-5}(a)\\\vdots\\(j-2)(-s)^{j-3}a^{j-4}+R_{4,j-2,j-5}(a)\\(j-2)(-s)^{j-2}a^{j-4}+R_{4,j-1,j-5}(a)
             \end{bmatrix},
            \end{align*}
where $\deg R_{m,i,j-m-1}(a)\le j-m-1$ and $R_{m,i,j-m-1}(a)$ denotes the terms of coefficient of $a_{j-i+1}$ in $\floor*{\frac{1}{n^{j}}}T_{n,m}(a)$ whose degrees of $a$ are less than $j-m-1$. And $R_{m,i,j-m-1}(a)=0$ when $m=j$. According to lemma \ref{3.15}, we only need to focus on the highest-degree coefficient of the polynomial. When we calculate $\floor*{\frac{1}{n^{4}}}T_{n,4}(a)$, the highest power of $\floor*{\frac{1}{n^{4}}}T_{n,3}(a)$ is $1$, and $\floor*{\frac{1}{n^j}} \left( \frac{T_{n,3}(2)}{2-1} - \frac{T_{n,3}(a)}{a-1}\right)$ leads to the cancellation of constant terms. Therefore, we need only focus on the linear term. When considering a more general $m$, after the polynomial in $\floor*{\frac{1}{n^{m}}}T_{n,3}(a)$ undergoes $m-3$ applications of the "differencing operation" (analogous to that in Lemma \ref{3.15}), only the coefficient of the term of highest degree is preserved and propagates to influence $\floor*{\frac{1}{n^{m}}}T_{n,m}(a)$. Next, we will give the general formula of the Laurent expansion of $T_{n,x}(a)$ and show how to compute $\floor*{\frac{1}{n^{j}}} T_{n,x+1}(a)$ from $\floor*{\frac{1}{n^j}} T_{n,x}(a),j\le x$.

\newpage
We obtain the Laurent expansion of $T_{n,x}(a)$:
$$
T_{n,x}(a)
$$

\begin{center}
\begin{tabular}{ c c  }
 $1$ & $0$\\[6pt]
 $\dfrac{1}{n}$ & $0$ \\[6pt]
 $\dfrac{1}{n^2}$ & $0$ \\[6pt]
 $\vdots $ & $\vdots$\\[6pt]
 $\dfrac{1}{n^x}$ &
 $\displaystyle
 (-1)^x\frac{a-x+2}{k^x}
 \begin{bmatrix}
 C(x-2,0)\\[4pt]
 C(x-2,1)(-s)\\[4pt]
 C(x-2,2)(-s)^2\\[4pt]
 \vdots\\[4pt]
 C(x-2,x-3)(-s)^{x-3}\\[4pt]
 C(x-2,x-2)(-s)^{x-2}\\[4pt]
 \end{bmatrix}
 \cdot
 \begin{bmatrix}
 a_x\\[4pt]
 a_{x-1}\\[4pt]
 a_{x-2}\\[4pt]
 \vdots\\[4pt]
 a_3\\[4pt]
 a_2\\[4pt]
 \end{bmatrix}
 $\\[12pt]
 $\dfrac{1}{n^{x+1}}$ &
 $\displaystyle
 (-1)^x\frac{a-x+2}{k^x}
 \begin{bmatrix}
 C_3(x-3,0)a+R_{x,1,0}(a)\\[4pt]
 C_3(x-3,1)(-s)a+R_{x,2,0}(a)\\[4pt]
 C_3(x-3,2)(-s)^2a+R_{x,3,0}(a)\\[4pt]
 \vdots\\[4pt]
 C_3(x-3,x-2)(-s)^{x-2}a+R_{x,x-1,0}(a)\\[4pt]
 C_3(x-3,x-1)(-s)^{x-1}a+R_{x,x,0}(a)\\[4pt]
 \end{bmatrix}
 \cdot
 \begin{bmatrix}
 a_{x+1}\\[4pt]
 a_{x}\\[4pt]
 a_{x-1}\\[4pt]
 \vdots\\[4pt]
 a_3\\[4pt]
 a_2\\[4pt]
 \end{bmatrix}
 $\\[12pt]
 $\vdots$ & $\vdots$ \\[6pt]
 $\dfrac{1}{n^j}$ &
 $\displaystyle
 (-1)^x\frac{a-x+2}{k^x}
 \begin{bmatrix}
 C_{j-x+2}(x-3,0)a^{j-x}+R_{x,1,j-x-1}(a)\\[6pt]
 C_{j-x+2}(x-3,1)(-s)a^{j-x}+R_{x,2,j-x-1}(a)\\[6pt]
 C_{j-x+2}(x-3,2)(-s)^2a^{j-x}+R_{x,3,j-x-1}(a)\\[6pt]
 \vdots\\[4pt]
 C_{j-x+2}(x-3,j-3)(-s)^{j-3}a^{j-x}+R_{x,j-2,j-x-1}(a)\\[6pt]
 C_{j-x+2}(x-3,j-2)(-s)^{j-2}a^{j-x}+R_{x,j-1,j-x-1}(a)
 \end{bmatrix}
 \cdot
 \begin{bmatrix}
 a_j\\[4pt]
 a_{j-1}\\[4pt]
 a_{j-2}\\[4pt]
 \vdots\\[4pt]
 a_3\\[4pt]
 a_2
 \end{bmatrix}
 $\\
\end{tabular}
\end{center}

From (\ref{(4)}) we can know that
\begin{align*}
    &\floor*{\frac{1}{n^{j}}}T_{n,x+1}(a)=\frac{1}{k}(\floor*{\frac{1}{n^j}}(T_{n,x}(x-1)-\frac{T_{n,x}(a)}{(a-x+2)})\nonumber\\
    &-\frac{\Sigma_{i=1}^{j-x}\frac{(-1)^is}{k^{i-1}}(a-x+1)(a(k-1)+x-1)^{i-1}\floor*{\frac{1}{n^{j-i}}}T_{n,m-1}(a)}{(a-x+2)})\\
    =&(-1)^{x+1}\frac{a-x+1}{k^{x+1}}
        \begin{bmatrix} a_j \\ a_{j-1} \\ a_{j-2} \\ \vdots \\a_3\\ a_2 \end{bmatrix}
        \cdot\left(
            \begin{bmatrix}
                C_{j-x+2}(x-3,0)a^{j-x-1}\\
                C_{j-x+2}(x-3,1)(-s)a^{j-x-1}\\
                C_{j-x+2}(x-3,2)(-s)^2a^{j-x-1}\\
                \vdots\\
                C_{j-x+2}(x-3,j-3)(-s)^{j-3}a^{j-x-1}\\
                C_{j-x+2}(x-3,j-2)(-s)^{j-2}a^{j-x-1}
            \end{bmatrix} \right.
            +
                \begin{bmatrix} 0\\
                C_{j-x+1}(x-3,0)(-s)a^{j-x-1}\\
                C_{j-x+1}(x-3,1)(-s)^2a^{j-x-1}\\
                \vdots\\
                C_{j-x+1}(x-3,j-4)(-s)^{j-3}a^{j-x-1}\\
                C_{j-x+1}(x-3,j-3)(-s)^{j-2}a^{j-x-1}       \end{bmatrix}\\
                &+
                \begin{bmatrix} 0\\
                0\\
                C_{j-x}(x-3,0)(-s)^2a^{j-x-1}\\
                \vdots\\
                C_{j-x}(x-3,j-6)(-s)^{j-3}a^{j-x-1}\\
                C_{j-x}(x-3,j-5)(-s)^{j-2}a^{j-x-1}
                 \end{bmatrix}
                +\dots+\left.
                \begin{bmatrix} 0 \\ 0 \\ 0 \\ \vdots \\C(x-2,x-3)(-s)^{j-3}a^{j-x-1}\\C(x-2,x-2)(-s)^{j-2}a^{j-x-1}\end{bmatrix}+
                \begin{bmatrix} R_{x+1,1,j-x-2}(a) \\ R_{x+1,2,j-x-2}(a) \\ R_{x+1,3,j-x-2}(a) \\ \vdots \\R_{x+1,x-2,j-x-2}(a)\\R_{x+1,x-1,j-x-2}(a)\end{bmatrix}
                \right)\\
                =&(-1)^{x+1}\frac{a-x+1}{k^{x+1}}
                \begin{bmatrix} a_j \\ a_{j-1} \\ a_{j-2} \\ \vdots \\a_3\\ a_2 \end{bmatrix}
        \cdot
            \begin{bmatrix}
                C_{j-x+1}(x-2,0)a^{j-x-1}+R_{x+1,1,j-x-2}(a)\\
                C_{j-x+1}(x-2,1)(-s)a^{j-x-1}+R_{x+1,2,j-x-2}(a)\\
                C_{j-x+1}(x-2,2)(-s)^2a^{j-x-1}+R_{x+1,3,j-x-2}(a)\\
                \vdots\\
                C_{j-x+1}(x-2,j-3)(-s)^{j-3}a^{j-x-1}+R_{x+1,x-2,j-x-2}(a)\\
                C_{j-x+1}(x-2,j-2)(-s)^{j-2}a^{j-x-1}+R_{x+1,x-1,j-x-2}(a)
            \end{bmatrix}
\end{align*}

The last equality is justified by Lemma \ref{3.18}.  Therefore, $\floor*{\tfrac{1}{n^j}}T_{n,x+1}(a)$ can be derived from $\floor*{\tfrac{1}{n^j}}T_{n,x}(a)$, which in turn allows us to determine all values of $\floor*{\tfrac{1}{n^m}}T_{n,m}(a)$. In particular, when $j=x+1$, we can get
$$
\floor*{\frac{1}{n^{x+1}}}T_{n,x+1}(a)=(-1)^{x+1}\frac{a-x+1}{k^{x+1}}\mathbf{1}^T\left(\begin{bmatrix}
    a_{x+1}\\a_{x}\\a_{x-1}\\...\\a_2
 \end{bmatrix}\odot
 \begin{bmatrix}
    1\\-s\\s^2\\...\\(-1)^{x-2}s^{x-2}\\(-1)^{x-1}s^{x-1}
 \end{bmatrix}\odot
 \begin{bmatrix}
    C(x-1,0)\\C(x-1,1)\\C(x-1,2)\\...\\C(x-1,x-2)\\C(x-1,x-1)
 \end{bmatrix}\right)
$$
So (\ref{p2}) is proved to be true when $m=x+1$.

\end{proof}

\begin{theorem}[Formula of $R_m(t)$]
$$
R_{m}(t)=\frac{\sum_{j=0}^{m-1}(-1)^j(B_{m-1,j}\cdot k^{m-j-1})}{k^{2m-1}}, k=1+1/t, t\geq 0,
$$
where
$
B_{m,j}=\frac{\binom{2m+2}{m-j}\binom{m+j}{m}}{m+1}
$ and $B_{m,j}$ denotes the Borel's Triangle.
\end{theorem}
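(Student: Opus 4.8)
The plan is to read off the closed form as the large-$n$ limit of the finite-sample expression already analysed. By the reciprocal correspondence between unit exponential and unit Fr\'echet order statistics recorded above, $R_{m}(t)=\lim_{n\to\infty}\mathcal{R}_{n,m}(t)$. First I would rewrite this in terms of $Q_{n,m}$: since $\mathcal{R}_{N,m}(t)=\tfrac{N!}{(N-m)!}R_{N,m}(t)$ and $R_{N,m}(t)=Q_{N-m+1,m}(t)$, putting $n=N-m+1$ gives
$$
\mathcal{R}_{n+m-1,m}(t)=\bigl((n+m-1)(n+m-2)\cdots n\bigr)\,Q_{n,m}(t),
$$
whose prefactor is a monic polynomial in $n$ of degree $m$. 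Because $Q_{n,m}$ is a rational function of $n$ with $\floor*{\tfrac{1}{n^{j}}}Q_{n,m}=0$ for $1\le j\le m-1$ and $Q_{n,m}=\mathcal{O}(n^{-m})$ (Theorem \ref{th5}, in particular \eqref{p1}), letting $n\to\infty$ isolates exactly the $n^{-m}$-coefficient, so that
$$
R_{m}(t)=\lim_{n\to\infty}\mathcal{R}_{n,m}(t)=\floor*{\tfrac{1}{n^{m}}}Q_{n,m}(t),
$$
which is the Hadamard-product formula of \eqref{p2}.

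Next I would simplify \eqref{p2}. Writing $s=1-1/k$, one has $1-s=1/k$, hence $a_{j}=(-1)^{j}(1+s+\cdots+s^{j-1})=(-1)^{j}k\,(1-s^{j})$, and the row-sum property of the Catalan triangle gives $\sum_{i=0}^{m-2}C(m-2,i)=C(m-1,m-1)=\tfrac{1}{m}\binom{2m-2}{m-1}$. Since $(-s)^{i}a_{m-i}=(-1)^{m}s^{i}k\,(1-s^{m-i})$, the alternating signs cancel and \eqref{p2} collapses to
$$
R_{m}(t)=\frac{1}{k^{m-1}}\left[\sum_{i=0}^{m-2}C(m-2,i)\,s^{i}-s^{m}\,C(m-1,m-1)\right].
$$
Substituting $s=(k-1)/k$ and clearing to the common denominator $k^{2m-1}$ yields the polynomial form
$$
R_{m}(t)=\frac{1}{k^{2m-1}}\left[\sum_{i=0}^{m-2}C(m-2,i)(k-1)^{i}k^{m-i}-C(m-1,m-1)(k-1)^{m}\right].
$$

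It then remains to identify the bracketed polynomial with $\sum_{j=0}^{m-1}(-1)^{j}B_{m-1,j}k^{m-1-j}$. Expanding $(k-1)^{i}=\sum_{l}\binom{i}{l}(-1)^{i-l}k^{l}$ and equating the coefficients of $k^{m-1-j}$ on both sides reduces everything to the single identity
$$
B_{m-1,j}=C(m-1,m-1)\binom{m}{j+1}-\sum_{i=j+1}^{m-2}C(m-2,i)\binom{i}{j+1},\qquad 0\le j\le m-1,
$$
where $B_{m-1,j}=\tfrac{1}{m}\binom{2m}{m-1-j}\binom{m-1+j}{m-1}$ is the Borel-triangle entry. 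I would establish this either by induction on $m$, using the Catalan recurrence $C(n,k)=C(n,k-1)+C(n-1,k)$ together with the recurrence for Borel's triangle, or by the snake-oil method (multiply by $x^{j}$, sum over $j$, and evaluate the resulting Catalan-triangle generating functions), or by invoking a known Catalan-to-Borel convolution formula. I expect this last step --- passing from the doubly-indexed Catalan-triangle sum to the compact Borel closed form --- to be the main obstacle: all the analysis (the dominated-convergence limit, the recursions for $r_{n,k}$, $\mathcal{R}_{n,m}$ and $Q_{n,m}$, and the Laurent-coefficient extraction of Theorem \ref{th5}) is already in place, so what is left is a self-contained binomial-sum identity, and the only real question is which of the equivalent routes gives the shortest verification.
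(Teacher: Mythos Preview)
Your proposal is correct and follows the same overall arc as the paper: extract $R_m(t)=\floor*{\tfrac{1}{n^m}}Q_{n,m}$ from Theorem~\ref{th5}, simplify the Hadamard product \eqref{p2}, and identify the result with the Borel-triangle polynomial. The only substantive difference is in the algebra of that middle step.

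You split $a_{m-i}=(-1)^{m-i}k(1-s^{m-i})$, which is correct and gives
\[
R_m(t)=\frac{1}{k^{m-1}}\Bigl[\sum_{i=0}^{m-2}C(m-2,i)s^{i}-s^{m}C(m-1,m-1)\Bigr].
\]
After clearing denominators this forces you into the non-standard identity
\[
B_{m-1,j}=C(m-1,m-1)\binom{m}{j+1}-\sum_{i=j+1}^{m-2}C(m-2,i)\binom{i}{j+1},
\]
which is true but, as you correctly anticipate, needs its own argument. The paper avoids this detour: instead of splitting $1-s^{m-i}$, it regroups $\sum_{i}C(m-2,i)\sum_{l=i}^{m-1}s^{l}$ by powers of $s$ and uses the Catalan partial-row-sum identity $\sum_{i\le j}C(m-2,i)=C(m-1,j)$ to obtain directly
\[
R_m(t)=\frac{1}{k^m}\sum_{j=0}^{m-1}C(m-1,j)\,s^{j}.
\]
Expanding $s^{j}=(1-1/k)^{j}$ and collecting powers of $1/k$ then lands immediately on the \emph{standard} Catalan-to-Borel relation $B_{n,j}=\sum_{i\ge j}\binom{i}{j}C(n,i)$, which the paper simply cites. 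So your ``main obstacle'' is self-inflicted; the paper's regrouping makes it disappear, and your third suggested route (``invoke a known Catalan-to-Borel convolution formula'') is exactly what the paper does, just reached in one fewer step.
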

%Table of Borel's Triangle can be found here:\\
%https://oeis.org/A234950
\begin{proof}
This follows from expanding the expression of the Theorem \ref{th5} by recalling $a_x=(-1)^x(s^{x-1}+s^{x-2}+\cdots+1)$, where $s=1-1/k$.
{\footnotesize
\begin{align*}
\floor*{\frac{1}{n^m}}Q_{n,m}=&\frac{(-1)^m}{k^m} \mathbf{1}^T\left(\begin{bmatrix}
    a_{m}\\a_{m-1}\\a_{m-2}\\...\\a_2
 \end{bmatrix}\odot
 \begin{bmatrix}
    1\\-s\\s^2\\...\\(-s)^{m-2}
 \end{bmatrix}\odot
 \begin{bmatrix}
    C(m-2,0)\\C(m-2,1)\\C(m-2,2)\\...\\C(m-2,m-2)
 \end{bmatrix}\right) \\
 =&\frac{1}{k^m}\left(\Sigma_{i=0}^{m-1}s^iC(m-2,0)+\Sigma_{i=1}^{m-1}s^iC(m-2,1)+\cdots+\Sigma_{i=m-2}^{m-1}s^iC(m-2,m-2)\right)\\
 =&\frac{1}{k^m}\left(s^0\Sigma_{i=0}^0C(m-1,i)+s^1\Sigma_{i=0}^1C(m-2,i)+\cdots+s^{m-2}\Sigma_{i=0}^{m-2}C(m-2,i)+s^{m-1}\Sigma_{i=0}^{m-2}C(m-2,i)\right)\\
 =&\frac{1}{k^m}\left(s^0C(m-1,0)+s^1C(m-1,1)+\cdots s^{m-1}C(m-1,m-1)\right)\\
 =&\frac{1}{k^m}\left(\binom{0}{0}C(m-1,0)+\left(\binom{1}{0}-\binom{1}{1}\frac{1}{k}\right)C(m-1,1)+\cdots +\Sigma_{i=0}^{m-1}\left(\binom{m-1}{i}(-\frac{1}{k})^i\right)C(m-1,m-1)\right)\\
 =&\frac{1}{k^m}\left(\Sigma_{s=0}^{m-1}\binom{s}{0}C(m-1,s)+\cdots+(-\frac{1}{k})^{m-1}\Sigma_{s=m-1}^{m-1}\binom{s}{m-1}C(m-1,s)\right)
\end{align*}
}
Because $B_{n,k}=\Sigma_{s=k}^n\binom{s}{k}C(n,s)$, we can get $\floor*{\frac{1}{n^m}}Q_{n,m}=\frac{1}{k^m}\Sigma_{j=0}^{m-1}\left((-\frac{1}{k})^jB_{m-1,j}\right)=\frac{\sum_{j=0}^{m-1}(-1)^j(B_{m-1,j}\cdot k^{m-j-1})}{k^{2m-1}}$.
\end{proof}

\begin{ex}[Illustration of $R_m(t)$ and Borel Triangle]
The Borel triangle in row-wise form is given as follows.
\[
\begin{bmatrix}
  1&&&\\
  2&1&&\\
  5&6&2&\\
  14&28&20&5\\
\end{bmatrix}
\]
We can get $R_m(t)$ from $m=1$ to $m=5$ ($k=1+1/t$):
$$
R_{1}(t)=\frac{1}{k}
$$
$$
R_{2}(t)=\frac{2k-1}{k^3}
$$
$$
R_{3}(t)=\frac{5k^2-6k+2}{k^5}
$$
$$
R_{4}(t)=\frac{14k^3-28k^2+20k-5}{k^7}
$$

\end{ex}

\subsection{Corollary \ref{cor:expdelta}}\label{A5}
Here, we consider the case where $\{X_{(i)},i=1,\ldots,n\}$ are sampled from exponential distribution with arbitrary rate parameter, while $\{Y_{(i)},i=1,\ldots,n\}$ are still sampled from unit exponential distribution. The following result is a special case of Theorem \ref{thm:notexp}.
\begin{corollary}
	\label{cor:expdelta}
	Suppose $\{X_{(i)}, i=1,\ldots,n\}$ are the order statistics of $n$ independent exponentially distributed random variables with rate parameter $1+\delta$ with $\delta>-1$. $\{Y_{(i)},i=1,\ldots,n\}$ are the order statistics of $n$ independent unit exponential random variables. Then, assume the index set $\Lambda_m$ satisfy Condition \ref{cond:1}, we have
	\begin{equation}
		\label{qmid2}
		q_{n,\Lambda_m}=\max_{i\in\Lambda_m}\{X_{(i)}/Y_{(i)}\}\overset{P}{\to} \frac{1}{1+\delta},\text{ as }n\to\infty.
	\end{equation}
	For any arbitrary positive integer $k$,
	\begin{equation}
		\label{qright2}
		q_{n,\Lambda_r}\triangleq\max_{i\in\Lambda_r}\{X_{(i)}/Y_{(i)}\}\overset{P}{\to} \frac{1}{1+\delta},\text{ as }n\to\infty,
	\end{equation}
	with $\Lambda_r=\{i\in\mathbb{N}:n-k+1\leq i\leq n\}.$
	
	For any arbitrary positive integer $k$, let $\Lambda_l=\{i\in\mathbb{N}: 1\leq i\leq k\}$, then $$q_{n,\Lambda_l}\triangleq\max_{i\in\Lambda_l}\{X_{(i)}/Y_{(i)}\}$$ converges to a non-degenerate distribution.
\end{corollary}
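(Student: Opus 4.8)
The plan is to reduce the statement to the unit-exponential results already proved, by exploiting the elementary scaling property of the exponential family. Since $X$ is exponential with rate $1+\delta$, the variable $X'\coloneqq(1+\delta)X$ is unit exponential; applying this transformation to each $X_i$ and using that $x\mapsto(1+\delta)x$ is strictly increasing (the restriction $\delta>-1$ guarantees the rate is positive), the order statistics satisfy $X_{(i)}=\frac{1}{1+\delta}X'_{(i)}$, where $\{X'_{(i)},i=1,\ldots,n\}$ are the order statistics of $n$ i.i.d.\ unit exponentials that remain independent of $\{Y_i\}$. Hence, for any index set $\Lambda$,
\[
q_{n,\Lambda}=\max_{i\in\Lambda}\frac{X_{(i)}}{Y_{(i)}}=\frac{1}{1+\delta}\,\max_{i\in\Lambda}\frac{X'_{(i)}}{Y_{(i)}},
\]
so the problem is identical to the unit-exponential problem up to the fixed multiplicative constant $\frac{1}{1+\delta}$.

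With this reduction in hand, the three claims follow immediately. For \eqref{qmid2}, I would apply Theorem \ref{thm:mid} to the pair $\{X'_{(i)}\}$, $\{Y_{(i)}\}$: under Condition \ref{cond:1}, $\max_{i\in\Lambda_m}X'_{(i)}/Y_{(i)}\overset{P}{\to}1$, and multiplication by the constant $\frac{1}{1+\delta}$ preserves convergence in probability, giving $q_{n,\Lambda_m}\overset{P}{\to}\frac{1}{1+\delta}$. For \eqref{qright2}, the same argument with Theorem \ref{thm:right} replacing Theorem \ref{thm:mid} applies verbatim to the fixed-size right-tail index set $\Lambda_r$. For the left-tail claim, Theorem \ref{thm:left} gives that $\max_{i\in\Lambda_l}X'_{(i)}/Y_{(i)}$ converges weakly to a random variable with the non-degenerate law $R_\ell$; since $\frac{1}{1+\delta}$ is a fixed positive scalar, $q_{n,\Lambda_l}$ converges weakly to $\frac{1}{1+\delta}$ times that limit, whose distribution function is $t\mapsto R_\ell\bigl((1+\delta)t\bigr)$, which is still non-degenerate.

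There is no real analytic obstacle here: the corollary is essentially a corollary of the rescaling identity combined with Theorems \ref{thm:mid}, \ref{thm:right}, and \ref{thm:left}. The only points that deserve explicit mention in the write-up are that the rescaling commutes with both forming the order statistics and taking the maximum over $\Lambda$ (immediate from strict monotonicity), and that neither convergence in probability to a constant nor weak convergence is affected by multiplication by a fixed positive constant. In particular, no new combinatorial computation is required — the explicit limiting law is simply the $R_\ell$ of Theorem \ref{thm:left} composed with the linear map $t\mapsto(1+\delta)t$.
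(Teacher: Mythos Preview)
Your proposal is correct and matches the paper's approach exactly: the paper states that the corollary ``directly follows from [the] first three theorems in Section~\ref{subsec:limit} due to the fact that if $X$ is a unit exponential random variable, $X/(1+\delta)$ is an exponentially distributed random variable with rate parameter $1+\delta$,'' and omits further details. Your write-up is in fact more explicit than the paper's, spelling out why the rescaling commutes with order statistics and maxima and why convergence in probability and weak convergence are preserved under multiplication by a positive constant.
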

This corollary directly follows from first three theorems in Section \ref{subsec:limit} due to the fact that if $X$ is a unit exponential random variable, $X/(1+\delta)$ is an exponentially distributed random variable with rate parameter $1+\delta$. Thus, the proof is omitted.

\subsection{Proof of Theorem \ref{thm:notexp}}
\begin{proof}
	For two different distributions with monotone increasing, continuous cumulative distribution functions $F$ and $G$, there exists $y^\ast\in\R$ such that $F^{-1}(y^\ast)\neq G^{-1}(y^\ast)$. Without loss of generality, assume $F^{-1}(y^\ast)< G^{-1}(y^\ast)$. Since $F^{-1}$ and $G^{-1}$ are continuous, there exists $y_0<y^\ast<y_1$ such that $$F^{-1}(y_0)<F^{-1}(y_1)<G^{-1}(y_0)<G^{-1}(y_1).$$
	Denote by $A=\{X_{(i)}\not\in(F^{-1}(y_0),F^{-1}(y_1)),i\in\Lambda_m\}$. Let $$\varepsilon=\frac12\left(\frac{G^{-1}(y_0)}{F^{-1}(y_1)}-1\right),$$
	we have
	\begin{equation*}
		\begin{aligned}
			P\left(\abs{q^{(1)}_{n,\Lambda_m}-1}<\varepsilon,\abs{q^{(2)}_{n,\Lambda_m}-1}<\varepsilon\right)\leq P\left(A\right)+P\left(\abs{q^{(1)}_{n,\Lambda_m}-1}<\varepsilon,\abs{q^{(2)}_{n,\Lambda_m}-1}<\varepsilon,A^c\right).
		\end{aligned}
	\end{equation*}
	
	First we show that the first term converges to $0$. Fix some small $\delta\in(0,\min\{y_0,1-y_1\})$. Then for sufficiently large $n$ such that $\alpha_n+n^{-1}<y_0-\delta<y_1+\delta<\beta_n-n^{-1}$, we have
	\begin{equation*}
		\begin{aligned}
			P(A)=& P(X_{(\floor{\beta_nn})}<F^{-1}(y_1),A) + P(X_{(\ceil{\alpha_nn})}>F^{-1}(y_0),A) \\
			&+ P(X_{(\ceil{\alpha_nn})}\leq F^{-1}(y_0)<F^{-1}(y_1)\leq X_{(\floor{\beta_nn})},A) \\
			\leq& P\left(F_n(X_{(\floor{\beta_nn})})-F(X_{(\floor{\beta_nn})})>\beta_n-n^{-1}-y_1\right) \\
			&+ P\left(F_n(X_{(\floor{\alpha_nn})})-F(X_{(\floor{\alpha_nn})})<\alpha_n+n^{-1}-y_0\right) \\
			&+ P(X_{(i)}\not\in(F^{-1}(y_0),F^{-1}(y_1)), 1\leq i\leq n), \\
		\end{aligned}
	\end{equation*}
	where $F_n(\cdot)$ is the empirical distribution function.
	By the Glivenko-Cantelli theorem \citep{durrett2010probability}, we have $\sup_{x\in\R} \abs{F_n(x)-F(x)}\xrightarrow{a.s.}0$. So
	$$P\left(F_n(X_{(\floor{\beta_nn})})-F(X_{(\floor{\beta_nn})})>\beta_n-n^{-1}-y_1\right)\leq P\left(\sup_{x\in X}\abs{F_n(x)-F(x)}>\delta\right)\xrightarrow{n\to\infty}0,$$
	$$P\left(F_n(X_{(\floor{\alpha_nn})})-F(X_{(\floor{\alpha_nn})})<\alpha_n+n^{-1}-y_0\right)\leq P\left(\sup_{x\in X}\abs{F_n(x)-F(x)}>\delta\right)\xrightarrow{n\to\infty}0.$$
	And notice that since $y_0<y_1$,
	$$P(X_{(i)}\not\in(F^{-1}(y_0),F^{-1}(y_1)),1\leq i\leq n)=(1-y_1+y_0)^n\xrightarrow{n\to\infty}0.$$
	Therefore, we have
	\begin{equation}
		\label{eq:firstconverge}
		P(A)\xrightarrow{n\to\infty}0.
	\end{equation}
	
	Next we show that the second term converges to $0$. When $A^c$ happens, there exists some $i\in\Lambda_m$ such that $X_{(i)}\in(F^{-1}(y_0),F^{-1}(y_1))$. %If $Y_{(i)}\geq  G^{-1}(y_0)$,
	%$$q^{(2)}_{n,\Lambda_m}\geq\frac{Y_{(i)}}{X_{(i)}}\geq\frac{G^{-1}(y_0)}{F^{-1}(y_1)}>1+\varepsilon.$$
	%This implies that $\abs{q^{(2)}_{n,\Lambda_m}-1}<\varepsilon$ and $Y_{(i)}\geq G^{-1}(y_0)$ does not happen simultaneously.
	Thus
	\begin{equation*}
		\begin{aligned}
			&\quad \ P\left(\abs{q^{(1)}_{n,\Lambda_m}-1}<\varepsilon,\abs{q^{(2)}_{n,\Lambda_m}-1}<\varepsilon,A^c\right)\\
			%&=P\left(\abs{q^{(1)}_{n,\Lambda_m}-1}<\varepsilon,\abs{q^{(2)}_{n,\Lambda_m}-1}<\varepsilon,X_{(i)}\in(F^{-1}(y_0),F^{-1}(y_1))\right)\\
			%&=P\left(\abs{q^{(1)}_{n,\Lambda_m}-1}<\varepsilon,\abs{q^{(2)}_{n,\Lambda_m}-1}<\varepsilon,X_{(i)}\in(F^{-1}(y_0),F^{-1}(y_1)),Y_{(i)}<G^{-1}(y_0)\right) \\
			&\leq P\left(\frac{Y_{(i)}}{X_{(i)}}<1+\varepsilon,F^{-1}(y_0)<X_{(i)}<F^{-1}(y_1)\right)\\
			&\leq P\left(Y_{(i)}<\frac12(F^{-1}(y_1)+G^{-1}(y_0)),F^{-1}(y_0)<X_{(i)}\right) \\
			&\leq P\left(F(X_{(i)})-G(Y_{(i)})>y_0-G\left[\frac12(F^{-1}(y_1)+G^{-1}(y_0))\right]\right).
		\end{aligned}
	\end{equation*}
	By the Glivenko-Cantelli theorem,
	\begin{equation*}
		\begin{aligned}
			\sup_{1\leq i\leq n}\abs{F(X_{(i)})-G(Y_{(i)})}&\leq\sup_{1\leq i\leq n}\abs{F(X_{(i)})-F_n(X_{(i)})}+\sup_{1\leq i\leq n}\abs{G(Y_{(i)})-G_n(Y_{(i)})}\\
			&\leq \sup_{x\in\R}\abs{F(x)-F_n(x)}+\sup_{x\in\R}\abs{G(x)-G_n(x)}\xrightarrow{a.s.} 0.
		\end{aligned}
	\end{equation*}
	So we have
	\begin{equation}
		\label{eq:secondconverge}
		P\left(\abs{q^{(1)}_{n,\Lambda_m}-1}<\varepsilon,\abs{q^{(2)}_{n,\Lambda_m}-1}<\varepsilon,A^c\right)\to 0.
	\end{equation}
	
	Combine \eqref{eq:firstconverge} and \eqref{eq:secondconverge}, we can conclude that
	\begin{equation}
		\label{qnoconverge}
		P\left(\abs{q^{(1)}_{n,\Lambda_m}-1}<\varepsilon,\abs{q^{(2)}_{n,\Lambda_m}-1}<\varepsilon\right)\xrightarrow{n\to\infty}0.
	\end{equation}
\end{proof}

\subsection{Proof of Theorem \ref{procedure}}
Conditional on $\{Y_i, i=1,\cdots,n\}$, the estimates $\{\widehat{\theta}^j, j=1,\cdots,m\}$ are independent. Furthermore, for each time, $\{X_{(k)}, k=1,\cdots,n\}$ are generated independently from an identical distribution, hence $\{\widehat{\theta}^j, j=1,\cdots,m\}$ are identically distributed. Finally, by the law of large numbers \citep{van2000asymptotic}, we have the order statistics $\widehat{\theta}_{i,(\alpha m/2)}$ and $\widehat{\theta}_{i,((1-\alpha/2)m)}$ converge to the $\alpha/2$ and $1-\alpha/2$ percentiles of the distribution of $\widehat{\theta}_i$ respectively.

\end{appendix}

\end{document}